\documentclass{article}
\usepackage{amssymb,amsmath,amsfonts,amsthm,latexsym}
 \usepackage[all,cmtip]{xy}
\usepackage[dvips]{graphicx}

\usepackage[latin1]{inputenc}
\usepackage{amscd}
\usepackage{enumerate}
\usepackage{hyperref}

\usepackage[usenames,dvipsnames]{pstricks}
\usepackage{epsfig}
\usepackage{pst-grad} 
\usepackage{pst-plot} 
\usepackage{pstricks-add}
\usepackage{pst-solides3d}

\usepackage{caption}
\usepackage{subcaption}


%
\oddsidemargin=1.3cm
\evensidemargin=1.3cm
\topmargin=0.0 cm
\textwidth=13.2cm
\textheight=21cm

\newtheorem{theo}{Theorem}[section]

\newtheorem{coro}[theo]{Corollary}
\newtheorem{prop}[theo]{Proposition}
\newtheorem{proposition}[theo]{Proposition}
\newtheorem{lemm}[theo]{Lemma}
\newtheorem{lemma}[theo]{Lemma}

\theoremstyle{definition}
\newtheorem{example}[theo]{Example}
\newtheorem{remark}[theo]{Remark}
\newtheorem{rema}[theo]{Remark}
\newtheorem{defi}[theo]{Definition}
\newtheorem{definition}[theo]{Definition}
\newtheorem{question}[theo]{Question}

\numberwithin{equation}{section}

 \def\RR{{\mathbb R}}  \def\TT{{\mathbb T}}
   
 \def\ZZ{{\mathbb Z}}

\newcommand{\bH}{\mathbb{H}}
\newcommand{\bD}{\mathbb{D}}
\newcommand{\R}{\mathbb{R}}

\newcommand{\cF}{\mathcal{F}}
\newcommand{\cU}{\mathcal{U}}

\newcommand{\eps}{\varepsilon}

\newcommand{\cV}{\mathcal{V}}

\def\cC{{\cal C}}    \def\cU{{\cal U}}
    \def\cV{{\cal V}}
\def\cE{{\cal E}}    
\def\cF{{\cal F}}

\newcommand{\wt}[1]{\widetilde{#1}}
\newcommand{\ie}{{\it i.e.,} }

\newcommand\flot{ \varphi^{t} }
\newcommand\hflot{ \tilde{ \varphi}^t }

\newcommand\orb{ \mathcal O }
\newcommand\leafs{ \mathcal L ^{s} }
\newcommand\leafu{ \mathcal L ^{u} }
\newcommand\e{\varepsilon}

\newcommand\fs{\mathcal F^{s} }
\newcommand\hfs{\widetilde{\mathcal F}^{s} }
\newcommand\fu{\mathcal F^{u} }
\newcommand\hfu{\widetilde{\mathcal F}^{u} }

\renewcommand\phi{\varphi}

\title{Anomalous Anosov flows revisited}
\author{Thomas Barthelm\'e, Christian Bonatti, \\ Andrey Gogolev and Federico Rodriguez Hertz \thanks{Part of this article was done during a visit of C.B.~to Pennnsylvania State University, supported by the Centre for Dynamical Systems and Geometry - Penn State and NSF DMS-1500947. A.G.~was partially supported by Simons grant 427063. F.R.H.~was partially supported by NSF DMS-1500947.} }

\begin{document}

\maketitle

\begin{abstract}
This paper is devoted to higher dimensional Anosov flows and consists of two parts. In the first part, we investigate fiberwise Anosov flows on affine torus bundles which fiber over 3-dimensional Anosov flows. We provide a dichotomy result for such flows --- they are either suspensions of Anosov diffeomorphisms or the stable and unstable distributions have equal dimensions.

In the second part, we give a new surgery type construction of Anosov flows, which yields non-transitive Anosov flows in all odd dimensions.

{ \medskip \noindent \textbf{Keywords:} Anosov flow, fiberwise Anosov flow, geodesic flow, hyperbolic manifold, DA flow, surgery.

\noindent \textbf{2010 Mathematics Subject Classification:}
Primary:  37D30  }
\end{abstract}


\section{Introduction}

Anosov dynamical systems have been fascinating many mathematicians by their uniform and regular structure leading to chaotic dynamics. All known examples of Anosov diffeomorphism are transitive, that is, they have an orbit which is dense in the ambient manifold. 
One early surprise in the field was the 1979 example of Franks and Williams of a non-transitive Anosov flow.
\begin{theo}[\cite{FW}]
There exists a 3-dimensional closed manifold $M$ which admits a non-transitive Anosov flow.
\end{theo}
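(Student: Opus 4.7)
The plan is to construct $M$ as a union of two compact 3-manifolds with torus boundary, each carrying an ``incomplete'' Anosov flow, glued along their boundaries so that the hyperbolic structures on the two pieces fit together into a genuine Anosov flow on the closed manifold.

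To build each piece, I would start with Arnold's cat map $A\colon\TT^2\to\TT^2$ and perform a \emph{derived-from-Anosov} (DA) modification: isotope $A$ inside a small disk around a fixed point so that this fixed point becomes a source, while leaving $A$ unchanged outside the disk. The new diffeomorphism $f$ retains invariant cone fields outside the disk and admits a uniformly hyperbolic attractor $\Lambda$ consisting of everything not attracted to the source. Suspending $f$ yields a flow on a closed 3-manifold carrying a hyperbolic attractor and a single repelling periodic orbit $\gamma$. Removing a small open tubular neighborhood of $\gamma$ produces a compact 3-manifold $N_+$ whose boundary $T_+$ is a 2-torus, transverse to the flow and along which the flow points strictly inward. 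The symmetric construction, with time reversed, yields a piece $N_-$ containing a hyperbolic repeller, with torus boundary $T_-$ on which the flow points strictly outward.

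Next I would glue the two pieces by a diffeomorphism $\psi\colon T_-\to T_+$, setting $M:=N_-\cup_\psi N_+$. Because the flow is transverse to each boundary with matching inward/outward orientations, orbits extend continuously across the gluing to define a continuous flow on $M$. To upgrade this to an Anosov flow, $\psi$ has to be compatible with the hyperbolic structures on either side: the stable foliation of $\Lambda$ meets $T_+$ in a one-dimensional foliation $\cF^s$, the unstable foliation of the repeller meets $T_-$ in a one-dimensional foliation $\cF^u$, and I would take $\psi$ to be a linear automorphism of $\TT^2$ whose action sends the slope of $\cF^u$ to a slope uniformly transverse to that of $\cF^s$ at every point. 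Such $\psi$ exist and in fact are generic in $SL(2,\ZZ)$.

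The main obstacle is showing that this transversality of boundary foliations, combined with hyperbolicity on each piece, really does yield globally invariant, uniformly hyperbolic cone fields on all of $M$. Concretely, one must interpolate the two given cone fields in a flow-box collar of the gluing torus in such a way that the resulting stable cone is forward-invariant and uniformly contracted under every flow segment crossing from $N_-$ into $N_+$, and symmetrically for the unstable cone under time-reversal; this is where the transversality of $\cF^s$ with $\psi_*\cF^u$ is crucial. Once this cone-field construction is achieved, $M$ carries an Anosov flow by standard criteria, and non-transitivity is automatic: every forward orbit starting in $N_+$ remains in $N_+$ and accumulates on $\Lambda$, so it never re-enters the interior of $N_-$, ruling out any dense orbit in $M$.
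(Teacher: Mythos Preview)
Your proposal is correct and follows essentially the same construction as Franks--Williams, which the paper summarizes in Section~2.2: perform a DA on a hyperbolic toral automorphism, suspend, excise a tube around the repelling periodic orbit, and glue two copies with opposite time orientation via a boundary diffeomorphism making the traces of the weak foliations transverse. The only minor discrepancies are that the boundary foliations are Reeb (not linear, so speaking of a single ``slope'' and of generic $SL(2,\ZZ)$ gluings is a bit imprecise---the paper uses the specific quarter-turn), and that Franks--Williams verify the Anosov property via Ma\~n\'e's transversality criterion rather than by building global cone fields directly.
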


Further, Theorem~1.1b) of~\cite{FW} asserts that building upon the 3-dimensional example one can obtain non-transitive Anosov flows on manifolds of higher dimension $\ge 5$ with dimensions of stable and unstable distributions --- $s$ and $u$ --- taking arbitrary values $\ge 2$. (Recall that codimension one Anosov flow on manifold of dimension $\ge 4$ are transitive by work of Verjovsky~\cite{Ve}.)

In this article, we revisit Franks-Williams examples and point out a deficiency in their higher dimensional construction. First we put Franks-Williams suggestion into a more general context of fiberwise Anosov flows on affine torus bundles (see the next section for a precise definition) and prove the following result about fiberwise Anosov flows.
\begin{theo}
\label{thm1}
Let $M$ be a closed $3$-dimensional manifold and let $\varphi^t\colon M\to M$ be an Anosov flow. Let $\TT^d\to E\to M$ be an affine torus bundle and let $\Phi^t\colon  E\to E$ be a fiberwise Anosov flow which fibers over $\varphi^t$. Then $\Phi^t$ is  Anosov (as a flow on the total space $E$) and one of the following assertions must hold
\begin{enumerate}
\item[1.] flow $\varphi^t$ is topologically orbit equivalent to a suspension flow of an Anosov automorphism of the 2-torus $\TT^2$, and $\Phi^t$ is also a suspension of an Anosov diffeomorphism;
\item[2.] the dimensions of stable and unstable distributions of the Anosov flow $\Phi^t$ are equal and $\ge 3$:
$$
s=u\ge 3.
$$
\end{enumerate}
\end{theo}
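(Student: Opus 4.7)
I would treat the two assertions separately. First, I would show that $\Phi^t$ is Anosov on $E$ by a cone-field argument. By hypothesis, the vertical bundle already carries an invariant hyperbolic splitting $TE^v=E^{vs}\oplus E^{vu}$, while the base Anosov flow has its own splitting $TM=E^s_\varphi\oplus \RR X_\varphi\oplus E^u_\varphi$. Using the flat affine connection on $E\to M$ coming from the affine bundle structure, I would horizontally lift $E^s_\varphi$ and $E^u_\varphi$ to $E$, and define candidate cones $C^s$ (containing $E^{vs}$ together with the horizontal lift of $E^s_\varphi$) and $C^u$ (dually). Exponential contraction in the vertical stable direction (from the fiberwise hypothesis) plus exponential contraction in the horizontal stable direction (from $\varphi^t$ being Anosov) give $D\Phi^t$--invariance of these cones for sufficiently large $t$, and hence a genuine hyperbolic splitting $TE=E^s\oplus\RR X_\Phi\oplus E^u$ with $s=1+\dim E^{vs}$ and $u=1+\dim E^{vu}$.

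\medskip

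\noindent For the dichotomy, the first reduction I would make is to observe that, because $\Phi^t$ respects the affine structure along fibers, the invariant distributions $E^{vs},E^{vu}$ are translation-invariant on each fiber $\TT^d_x$, so they are given by a pair of linear subspaces $V^s(x),V^u(x)\subset\RR^d$ varying continuously with $x\in M$. Since this splitting is a globally defined continuous distribution on all of $E$ (not a splitting defined only up to an ordering), the linear part of the affine holonomy of the bundle around every loop in $M$ must preserve $V^s\oplus V^u$. In other words, the affine-bundle monodromy representation $\rho\colon\pi_1(M)\to GL(d,\ZZ)\ltimes\TT^d$ has linear part taking values in the block-diagonal subgroup with respect to the splitting $V^s\oplus V^u$, and along a closed $\varphi^t$-orbit this linear part is Anosov on $\TT^d$ with exactly these invariant subspaces.

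\medskip

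\noindent The heart of the argument is then to translate this algebraic rigidity into a topological constraint on the base flow. I would consider the $\Phi^t$-invariant function $\sigma(x)=\tfrac{d}{dt}\big|_{t=0}\log|\det D\Phi^t|_{E^{vu}}(x)|$, which descends to a continuous function on $M$ thanks to the affine structure. Its integral over any closed orbit of $\varphi^t$ equals the sum of unstable Lyapunov exponents, hence is strictly positive. The plan is to use the asymmetry between $V^s$ and $V^u$ --- either in dimension ($s'\ne u'$) or, in the borderline case $s'=u'=1$, through secondary invariants of $\rho$ --- to promote this cocycle to a closed 1-form $\alpha$ on $M$ positive on $X_\varphi$. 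By Tischler/Plante, such an $\alpha$ produces a cross-section for $\varphi^t$, so $\varphi^t$ is orbit equivalent to a suspension of an Anosov diffeomorphism of $\TT^2$. In that case, pulling $E$ back to the cross-section gives a $\TT^d$-bundle over $\TT^2$ on which the first-return map of $\Phi^t$ is both Anosov on the base (by $\varphi^t$) and fiberwise Anosov (by hypothesis), hence Anosov on the total space; thus $\Phi^t$ is itself a suspension, placing us in case 1. Whenever this construction fails we land in case 2, $s=u\ge 3$.

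\medskip

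\noindent The step I expect to be hardest is the production of $\alpha$ --- that is, leveraging the canonical splitting of $\rho$ with unequal blocks (or the low-dimensional case $s'=u'=1$) to actually manufacture a closed 1-form positive on $X_\varphi$, rather than merely a positive continuous cocycle. This step seems to genuinely require the classification/structure theory of 3-dimensional Anosov flows (e.g., Fenley--Barbot-type results on weak foliations of non-$\RR$-covered or non-skewed flows, or Plante's criterion relating cross-sections to the sign of orbit homology classes) combined with the special form of the monodromy representation. I would not expect a purely local or cone-field argument to suffice, and matching exactly the threshold $s=u\ge 3$ (i.e. ruling out $s'=u'=1$ over non-suspension bases) strikes me as the most delicate subcase, probably handled by a separate low-dimensional argument using that the fiber is $\TT^2$ and the fiberwise return maps are $SL(2,\ZZ)$-Anosov.
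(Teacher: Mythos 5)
Your plan for the Anosov property of $\Phi^t$ (cone fields combining the fiberwise and base hyperbolicity) and your treatment of the suspension case (pull back over a cross-section and take the first-return map) match the paper's Proposition~\ref{prop_anosov} and Proposition~\ref{prop_suspensions}. The dichotomy itself, however, rests on two steps that do not hold as stated. First, you assume that the vertical splitting $V^s\oplus V^u$ is translation-invariant in each fiber and that the linear part of the monodromy representation $\rho$ is block-diagonal with respect to it. The theorem only assumes that the \emph{bundle} is affine; the flow is merely fiberwise Anosov, a dynamical condition, and the vertical distributions need not be linear in the fibers. Even for fiberwise affine flows, block-diagonality of $\rho$ over every loop does not follow, because the subspaces vary with the base point; the only genuine link between monodromy and splitting is along closed orbits (and free homotopies between them), where the first-return map of $\Phi^t$ to a fiber is an Anosov diffeomorphism of $\TT^d$ homotopic to the linear monodromy, so that Franks--Manning rigidity pins down its stable dimension.

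Second, and more seriously, the step you yourself flag as hardest --- promoting the positive cocycle $\sigma$ to a closed $1$-form positive on $X_\varphi$ --- is the entire content of the dichotomy, and no mechanism is offered. Note that the unstable Jacobian cocycle is strictly positive for \emph{every} Anosov flow, so positivity alone can never detect suspensions, and it is unclear what ``asymmetry'' of $\rho$ could feed a Tischler/Plante argument. The paper's route (Theorem~\ref{thm_fiberwise_anosov_over_n-dimensional_anosov}) is different and needs no cross-section in the non-suspension case: by Barbot--Fenley (Theorem~\ref{t.free}), a $3$-dimensional Anosov flow not orbit equivalent to a suspension has periodic orbits $a,b$ with $a$ freely homotopic to $-b$. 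Pulling the bundle back over the homotopy cylinder, the return maps to the fibers over the two boundary circles are Anosov diffeomorphisms of $\TT^d$ homotopic to $A$ and to $A^{-1}$ respectively, where $A$ is the linear monodromy of the cylinder; Franks--Manning then forces $\dim V^s$ to equal both the stable and the unstable dimension of $A$, so the fiberwise dimensions agree and $d$ is even. Finally, $d=2$ is excluded by a rotation-number/intermediate-value argument along the same homotopy: the vertical unstable foliations interpolate between foliations parallel to the unstable and to the stable eigenline of $A$, so some intermediate fiber foliation is parallel to a rational foliation and has a circle leaf, contradicting Lemma~\ref{lem_no_circle_leaves}. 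Your proposal contains neither the free-homotopy input nor a substitute for this $d=2$ exclusion, so as it stands the dichotomy (and in particular the bound $s=u\ge 3$) is not established.
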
 
We will explain that both possibilities above indeed occur. Note that because suspension flows are transitive the above theorem precludes the existence of non-transitive higher dimensional Anosov flows which fiber over a non-transitive 3-dimensional Anosov flow when $s\neq u$.

Theorem \ref{thm1} is a corollary of the following result.
\begin{theo}
\label{thm_fiberwise_anosov_over_n-dimensional_anosov}
Let $M$ be a closed manifold and let $\varphi^t\colon M\to M$ be an Anosov flow. Let $\TT^d\to E\to M$ be an affine torus bundle and let $\Phi^t\colon  E\to E$ be a fiberwise Anosov flow which fibers over $\varphi^t$. 
Assume that there exists two periodic orbits $a$ and $b$ (possibly $a=b$) of $\varphi^t$ such that $a$ is freely homotopic to $-b$, \ie to the orbit $b$ with reversed orientation, then $d$ has to be even, $d\ge 4$, and the fiberwise stable and unstable distributions of $\Phi^t$ have equal dimensions. 
\end{theo}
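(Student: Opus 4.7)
The plan is to use the free homotopy of $a$ and $-b$ to force an algebraic conjugacy of monodromies that is incompatible with the parity of the fiberwise stable/unstable dimensions.

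First I would package the structure. Since $\TT^d \to E \to M$ is an affine torus bundle, the vertical bundle $T^vE$ is canonically (after picking a continuous section) the pullback of a flat rank-$d$ vector bundle $V \to M$ whose holonomy is the linear part $L\colon \pi_1(M) \to GL(d,\ZZ)$ of the affine monodromy representation $\rho$. The fiberwise Anosov structure produces a continuous $\Phi^t$-invariant splitting $V = E^s_v \oplus E^u_v$ of constant ranks $d_s, d_u$ with $d_s + d_u = d$. For any periodic orbit $\gamma$ of $\varphi^t$ of period $T_\gamma$ through $p$, the return map $\Phi^{T_\gamma}\colon F_p \to F_p$ is an Anosov diffeomorphism of $\TT^d$ homotopic to the affine monodromy $\rho(\gamma)$; by Franks--Manning it is topologically conjugate to its linearization $L(\gamma)\in GL(d,\ZZ)$, which is therefore hyperbolic with stable dimension $d_s$ and unstable dimension $d_u$.

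The algebraic core is then short. Free homotopy of $a$ and $-b$ says that $[a]$ and $[b]^{-1}$ are conjugate in $\pi_1(M)$, so $L(a)$ is conjugate to $L(b)^{-1}$ in $GL(d,\ZZ)$ and the two matrices share their eigenvalue multiset. But $L(a)$ has $d_s$ eigenvalues in the open unit disk, whereas $L(b)^{-1}$ has $d_u$ eigenvalues there (the reciprocals of the unstable eigenvalues of $L(b)$). Matching the spectra forces $d_s = d_u$, so $d$ is even.

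To exclude $d=2$, assume $d_s = d_u = 1$ and establish that $E^s_v$ and $E^u_v$ are flat line subbundles of $V$. Granting this, holonomy restricts to a character $\chi^s\colon\pi_1(M)\to\RR^*$ that equals the stable eigenvalue of $L(\gamma)$ on each periodic class $[\gamma]$. Since characters are conjugation-invariant, we get $\chi^s([a]) = \chi^s([b^{-1}]) = \chi^s([b])^{-1}$; but $|\chi^s([a])| < 1$ while $|\chi^s([b])^{-1}| > 1$, a contradiction. The main obstacle is this flatness step: a priori the Anosov subbundles $E^s_v, E^u_v$ are only continuous and $D\Phi^t$-invariant, not holonomy-invariant. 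I would prove flatness by observing that $E^s_v|_p$ is pinned down pointwise as the stable eigenline of the return map along any periodic orbit of $\varphi^t$ through $p$, and then using density of periodic orbits, continuity of $E^s_v$, and the fact that the image of $L$ in $GL(2,\ZZ)$ is discrete to promote the pointwise invariance from periodic classes to all of $\pi_1(M)$.
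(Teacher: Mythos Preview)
Your argument for $d_s=d_u$ is correct and is essentially the paper's, repackaged via the holonomy representation rather than the pullback bundle over the homotopy cylinder; both reduce to Franks--Manning plus the observation that conjugacy preserves the stable dimension while inversion swaps it with the unstable one. The exclusion of $d=2$, however, has a genuine gap at exactly the step you flag as the main obstacle. The fiberwise stable distribution is a line field on $E$, not on $M$: over a periodic orbit $\gamma$ the return map $\Phi^{T_\gamma}$ is only \emph{topologically} conjugate to $L(\gamma)$, so its stable line field on the fiber $\TT^2$ is in general not parallel and need not coincide (under the flat trivialization) with the linear stable eigenline of $L(\gamma)$. Thus there is no well-defined ``stable line over $M$'' to be flat. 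Even if you replace the pointwise line by the asymptotic slope of the fiberwise stable foliation (which already requires first showing there are no Reeb components), your discreteness argument still breaks: the stable eigenslopes of hyperbolic elements of $GL(2,\ZZ)$ are quadratic irrationals, and these are \emph{dense} in $\RR P^1$, so continuity together with density of periodic orbits cannot force the slope map to be locally constant. And density of periodic orbits in $M$ can itself fail when $\varphi^t$ is non-transitive.

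The paper's route for $d=2$ avoids flatness entirely and stays on the homotopy cylinder. A short lemma shows the vertical unstable foliation has no circle leaves (their lengths would shrink to zero under $\Phi^{-t}$), so on each $\TT^2$ fiber the foliation is Reebless and therefore parallel to a unique linear foliation, varying continuously with the basepoint. Along the cylinder $[0,1]\times S^1$ from $a$ to $-b$ this linear slope moves continuously from the unstable eigenline of $A$ to the unstable eigenline of $A^{-1}$ (i.e., the stable eigenline of $A$); since these are distinct irrational slopes, the intermediate value theorem forces the slope to be rational at some parameter, which produces a circle leaf and contradicts the lemma.
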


\begin{rema}
Just as in Theorem~\ref{thm1} the flow $\Phi^t$ is Anosov as a flow on the total space $E$ (see Proposition~\ref{prop_anosov}). The fiberwise stable and unstable distributions of $\Phi^t$ are the restrictions of the stable and unstable distributions to the tangent space of the fibers. Hence, if $\phi^t$ has matching stable and unstable dimensions (so, in particular, if $\phi^t$ is a 3-dimensional flow, in which case the stable and unstable distributions are both one-dimensional) then the stable and unstable distributions of $\Phi^t$ also have equal dimensions.
\end{rema}

Work of Barbot and Fenley (see Theorem~\ref{t.free} below) implies that the only Anosov flows on $3$-manifolds that do not admit pairs of periodic orbits which are freely homotopic to each others' inverse are orbit equivalent to suspensions of Anosov automorphisms. This allows us to deduce Theorem~\ref{thm1} from Theorem~\ref{thm_fiberwise_anosov_over_n-dimensional_anosov}.

Another application of Theorem \ref{thm_fiberwise_anosov_over_n-dimensional_anosov} concerns \emph{algebraic} Anosov flows. An Anosov flow $\Phi^t$ is called algebraic if it can be seen as a $\R$-action on a homogeneous manifold $H \backslash G / \Gamma$, where $G$ is a connected Lie group, $H$ a compact subgroup of $G$ and $\Gamma \subset G$ a torsion-free uniform lattice. Tomter~\cite{T,T75} (see also~\cite{BM13}) gave a certain classification of algebraic Anosov flows and supplied examples. We can use this classification of Tomter and Theorem~\ref{thm_fiberwise_anosov_over_n-dimensional_anosov} to deduce the following.
%
\begin{coro} \label{cor_algebraic_flows}
 Let $\Phi^t$ be an algebraic Anosov flow. Then either the dimensions of the stable and unstable distributions of $\Phi^t$ are equal, or $\Phi^t$ is (up to passing to a finite cover) a suspension of an Anosov automorphism.
\end{coro}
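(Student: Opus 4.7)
\medskip
\noindent\textbf{Proof plan.}
The strategy is to use Tomter's classification of algebraic Anosov flows to reduce the statement to Theorem~\ref{thm_fiberwise_anosov_over_n-dimensional_anosov}. First I would recall that, after passing to a finite cover, Tomter's classification places every algebraic Anosov flow $\Phi^t$ into one of three classes: (i) the suspension of an Anosov automorphism of a nilmanifold; (ii) the geodesic flow on a compact quotient of a rank-one symmetric space; or (iii) a fiberwise Anosov flow on an affine torus bundle over an algebraic Anosov flow of type (i) or (ii). Case (i) is exactly the second alternative of the conclusion. In case (ii), the Cartan involution of the symmetric space carries the flow generator to its negative and therefore interchanges the stable and unstable distributions, so $s=u$ automatically.

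The core of the argument is case (iii). Suppose first that the base flow $\varphi^t$ is of type (ii). I would produce periodic orbits $a$, $b$ of $\varphi^t$ with $a$ freely homotopic to $-b$: starting from a closed geodesic $\gamma$ in the locally symmetric space, the reverse geodesic $\bar{\gamma}$ gives a second periodic orbit $b$, and the fiberwise involution $v\mapsto -v$ of the unit tangent bundle realizes a free homotopy between the class of $a$ and the class of $-b$. When the base has dimension three this is the content of Theorem~\ref{t.free} cited above; in higher dimensions the fundamental group of the unit tangent bundle agrees with that of the base, so the same conclusion is immediate. Theorem~\ref{thm_fiberwise_anosov_over_n-dimensional_anosov} then forces the fiberwise stable and unstable dimensions of $\Phi^t$ to be equal, and combined with $s=u$ on the base (from case (ii)), the stable and unstable distributions of $\Phi^t$ on the total space have equal dimensions.

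If instead the base flow in case (iii) is of type (i), then Theorem~\ref{thm_fiberwise_anosov_over_n-dimensional_anosov} cannot be directly invoked (suspensions admit no freely-homotopic-to-inverse pairs). I would argue in this case directly that $\Phi^t$ is itself a suspension up to finite cover: the base fibration $M\to S^1$ can be pulled back to a fibration $E\to S^1$, and, perhaps after a further finite cover, $E$ becomes a mapping torus. By Proposition~\ref{prop_anosov} the total flow is Anosov, and hence the monodromy of $E\to S^1$ is an Anosov diffeomorphism, returning us to alternative one of the conclusion.

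The main obstacle I anticipate is twofold. On the Lie-theoretic side, one must make Tomter's classification precise enough that the trichotomy is genuinely exhaustive and, crucially, that in case (iii) the fibers may be taken to be tori (rather than arbitrary nilmanifolds) after a finite cover, so that Theorem~\ref{thm_fiberwise_anosov_over_n-dimensional_anosov} literally applies. On the geometric side, one must verify the existence of the freely-homotopic-to-inverse pairs for higher-dimensional rank-one geodesic flows; while the $\pi_1$-argument sketched above handles the real hyperbolic case cleanly, the complex, quaternionic, and octonionic cases require a careful analysis of the unit tangent bundle of the locally symmetric space.
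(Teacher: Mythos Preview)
Your overall strategy---reduce to Tomter's classification and then invoke Theorem~\ref{thm_fiberwise_anosov_over_n-dimensional_anosov}---is exactly the paper's approach, and your Cartan-involution observation that the rank-one geodesic flow itself has $s=u$ is the (implicit) ingredient needed to pass from equality of \emph{fiberwise} dimensions to equality of total dimensions. Your worry about producing freely-homotopic-to-inverse pairs in the complex, quaternionic, and octonionic cases is also unfounded: the flip map $\sigma(x,v)=(x,-v)$ on $SM$ sends the orbit $a$ of a closed geodesic to $-b$, and a free homotopy $a\simeq -b$ is obtained by rotating $\gamma'(t)$ to $-\gamma'(t)$ through any continuously chosen unit normal $w(t)$ along $\gamma$; since the normal bundle of a closed curve in a manifold of dimension $\ge 3$ always admits a nonvanishing section, this works uniformly.

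There is, however, a genuine gap in how you propose to handle the nilmanifold fibers. Tomter's classification (as stated in~\cite[Theorem~4]{BM13}) is a \emph{dichotomy}: up to finite cover, $\Phi^t$ is either a suspension of an Anosov automorphism of a nilmanifold, or a \emph{nil-suspension} over a rank-one geodesic flow, i.e.\ a fiberwise affine Anosov flow whose fiber is an arbitrary nilmanifold $N$, not necessarily a torus. Your case ``(iii) over (i)'' does not occur, and more importantly, your proposed fix---pass to a further finite cover so that the fiber becomes a torus---cannot work: a nilmanifold with nonabelian fundamental group (e.g.\ a Heisenberg nilmanifold) is never finitely covered by a torus, since $\pi_1$ of a finite cover is a finite-index subgroup and hence still nonabelian. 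The paper resolves this differently: it invokes~\cite[Section~3.3.1]{BM13}, which shows that every nil-suspension is obtained by a finite tower of successive $\TT^d$-suspensions (coming from the lower central series of the nilpotent group). One then applies Theorem~\ref{thm_fiberwise_anosov_over_n-dimensional_anosov} at each stage of the tower, propagating the existence of freely-homotopic-to-inverse pairs (and hence equality of fiberwise dimensions) upward. This iterative step is the missing idea in your plan.
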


In the second part of this article, we proceed to recover some non-transitive examples in the case when $s=u$ by replanting the Franks--Williams idea into the setting of geodesic flows on hyperbolic manifolds.
\begin{theo}
\label{thm2}
For any $n\ge 1$ there exists a $(2n+1)$-dimensional manifold $M$ which supports a non-transitive Anosov flow $\psi^t\colon M\to M$ with $s=u=n$.
\end{theo}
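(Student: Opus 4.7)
The plan is to generalize the Franks--Williams surgery from dimension three to dimension $2n+1$ by replacing the suspension of a hyperbolic toral automorphism (which has $s=u=1$) by the geodesic flow $g^t$ on the unit tangent bundle $T^1N$ of a closed orientable hyperbolic $(n+1)$-manifold $N$, which is a transitive smooth Anosov flow on a $(2n+1)$-manifold with $\dim E^s=\dim E^u=n$. Pick a closed geodesic $\gamma\subset N$ whose associated periodic orbit $\hat\gamma\subset T^1N$ has orientable stable and unstable bundles (which can always be arranged by passing to a finite cover of $N$).

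The first step is to build a ``half block'' out of $T^1N$. A tubular neighborhood $U$ of $\hat\gamma$ is diffeomorphic to $S^1\times D^{2n}$, and the Poincar\'e return map of $g^t$ on a transverse disk is hyperbolic with $n$-dimensional stable and unstable eigenspaces. Perform a classical derived-from-Anosov modification supported in $U$ turning $\hat\gamma$ into a source periodic orbit, leaving $g^t$ unchanged outside $U$. The resulting flow $\widetilde g^t$ has non-wandering set equal to the disjoint union of $\hat\gamma$ and a hyperbolic attractor $\Lambda$, whose splitting extends the Anosov splitting of $g^t$ outside $U$. Removing a small open forward-invariant neighborhood $V\subset U$ of $\hat\gamma$ under $-\widetilde g^t$, with $\partial V$ transverse to $\widetilde g^t$, yields a compact $(2n+1)$-manifold $M_A:=T^1N\setminus V$ with boundary $\partial M_A\cong S^1\times S^{2n-1}$ entered transversely by the flow, and with $\Lambda$ as a global attractor in its interior. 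Applying the time-reversed construction (or the same construction to $-g^t$) produces a mirror piece $M_B$ with an exiting boundary and a hyperbolic repeller $\Lambda'$ in its interior.

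Next, choose a diffeomorphism $h\colon \partial M_B\to\partial M_A$ such that on $\partial M_A$ the image under $h$ of the trace of the strong unstable foliation $W^{uu}(\Lambda')$ is transverse to the trace of the strong stable foliation $W^{ss}(\Lambda)$. Define $M:=M_A\cup_h M_B$; the two flows patch into a continuous flow $\psi^t$ on $M$ which is smooth on the interiors. The transversality at the glued boundary propagates under $\psi^t$ to yield forward- and backward-invariant cone fields that are exponentially contracted and expanded, promoting $\psi^t$ to a topologically Anosov flow, and then to a smooth Anosov flow by the standard $C^0$-to-$C^\infty$ regularization in this setting. The non-wandering set is $\Lambda\sqcup\Lambda'$, so $\psi^t$ is non-transitive, and by construction $\dim E^s=\dim E^u=n$.

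The main obstacle is producing the gluing diffeomorphism $h$ realizing the required transversality between two $n$-dimensional foliation traces on the $(2n)$-dimensional boundary $S^1\times S^{2n-1}$. For $n=1$ this was accomplished by Franks--Williams via a hyperbolic automorphism of $T^2$. For $n\ge 2$ one exploits the abundance of closed geodesics in $N$ and the rotational symmetry of the Anosov splitting of $g^t$ near $\hat\gamma$, and realizes $h$, for instance, as a fiberwise rotation composed with a Dehn-type twist along the $S^1$-factor that renders the two foliation traces everywhere transverse. Once transversality is in place, the Anosov property of $\psi^t$ follows by the standard cone-field machinery, completing the construction.
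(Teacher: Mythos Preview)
Your overall strategy---perform a DA on a geodesic flow, excise a neighborhood of the repeller, and glue two copies along the resulting boundary---is the right one, and matches the paper's. The genuine gap is exactly the step you flag as ``the main obstacle'': the gluing diffeomorphism. When you blow up a single periodic orbit $\hat\gamma$, the boundary is (a sphere bundle diffeomorphic to) $S^1\times S^{2n-1}$, and the traces of the weak stable and unstable foliations are $n$-dimensional singular foliations whose leaves are generically $S^1\times S^{n-1}$, degenerating along copies of $S^1\times S^{n-1}$ sitting in the ``pure stable'' and ``pure unstable'' spheres. Your proposed gluing, ``a fiberwise rotation composed with a Dehn-type twist along the $S^1$-factor,'' is not defined precisely enough to verify transversality, and it is far from clear that any such map does the job: the monodromy along $\hat\gamma$ acts on $S^{2n-1}$ with eigenvalues $e^{\pm\ell}$, so the natural ``swap'' $(s,u)\mapsto(u,s)$ that would send $\bar{\mathcal F}^s$ to $\bar{\mathcal F}^u$ does not obviously descend to the sphere bundle, and the singular loci must be handled. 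In dimension three this is exactly the quarter-turn on $T^2$, but for $n\ge 2$ you have supplied no argument.

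The paper sidesteps this difficulty by a different choice of where to perform the DA. Instead of a closed geodesic, it takes a closed \emph{totally geodesic hypersurface} $N\subset M$ (which exists in suitable arithmetic hyperbolic manifolds) and deforms along the codimension-two invariant submanifold $SN\subset SM$. The normal bundle of $SN$ is two-dimensional with one stable and one unstable direction, so the DA is literally the two-dimensional pitchfork picture fibered over $SN$, and the stable foliation of the attractor sits inside the stable foliation of $g^t$. The payoff is that the flip map $\sigma(x,v)=(x,-v)$ preserves the tubular neighborhood $\mathcal V_\kappa$ of $SN$ and interchanges $\mathcal F^s$ and $\mathcal F^u$; taking $\omega=\sigma|_{\partial\mathcal V_\kappa}$ one gets $\omega(\bar{\mathcal F}^s)=\bar{\mathcal F}^u\pitchfork\bar{\mathcal F}^s$ for free. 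This is precisely the structure your periodic-orbit approach lacks: $\sigma$ sends $\hat\gamma$ to the reversed orbit $-\hat\gamma$, so it does not preserve your boundary. If you want to salvage the periodic-orbit route you must actually construct the gluing and verify transversality everywhere on $S^1\times S^{2n-1}$; as written, that step is missing.
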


\begin{remark}
It seems to be clear that by following our surgery idea for the proof of the above theorem one can obtain further examples of Anosov flows (transitive and non-transitive) in odd dimensions by pasting together several ``hyperbolic pieces." Indeed, one might even hope to fully develop the machinery of B\'eguin--Bonatti--Yu~\cite{BBY} in any odd dimension. However, it is clear that the proof of the Anosov property becomes much more subtle and we do not pursue this direction here. However we will give, without providing all the details, one relatively easy way of building a transitive example (Proposition~\ref{prop_transitive_examples})
\end{remark}

As we mentioned above, in dimension $3$, the only Anosov flows which do not admit periodic orbits freely homotopic to each others' inverse are the suspensions of Anosov diffeomorphisms. Tomter's classification of algebraic Anosov flows also implies that only suspensions do not satisfy that property. Finally, it is also easy to notice that the examples we build in Theorem~\ref{thm2} all admit a pair of periodic orbits which are freely homotopic to each others' inverse, and this would be the case for any example build with a similar surgery construction. Hence, the following question arises naturally.
 \begin{question}\label{Q_non_freely_hom}
  Does there exists an Anosov flow (on a manifold of dimension at least $4$), which is not a suspension of an Anosov diffeomorphism, and such that it does not admit a pair of periodic orbits which are freely homotopic to each others' inverse?
   \end{question}

 In order to obtain Theorem \ref{thm_fiberwise_anosov_over_n-dimensional_anosov}, we strongly rely on the fiberwise Anosov structure. However, it is not clear that it is necessary to have this structure in order to deduce the equality of the stable and unstable dimensions. Thus, we ask
 \begin{question}\label{Q_hom_and_dimensions}
  Let $\varphi^t$ be an Anosov flow on a manifold $M$. Suppose that $\varphi^t$ admits a pair of periodic orbits that are freely homotopic to the inverse of each other. Does this imply that the stable distribution and the unstable distribution have the same dimension?
 \end{question}

 In light of the situation with algebraic flows, one can even combine the above two questions, and wonder about a ``Generalized Verjovsky Conjecture." We pose it as a question.

\begin{question} \label{Q_generalized_Verjovsky}
Does there exist an Anosov flow which is not orbit equivalent to a suspension and which has different dimensions of stable and unstable distributions?
\end{question}
In particular, we wonder about the existence of \emph{non-transitive} examples. Indeed, if Questions \ref{Q_non_freely_hom} and \ref{Q_hom_and_dimensions} lead naturally to the general case of Question \ref{Q_generalized_Verjovsky}, there is another way of arriving to that question. Verjovsky \cite{Ve} proved that codimension one Anosov flow in dimension strictly greater than $3$ are transitive. So one can ask whether the essential feature for Verjovsky's result to hold is indeed the codimension one hypothesis or if the difference of stable and unstable dimensions is enough (notice that we are asking about the result, Verjovsky's proof do use codimension one in an essential way).


In Section~\ref{sec_2} we describe the setting of fiberwise Anosov flows, revisit the Franks-Williams construction and then prove Theorems~\ref{thm1} and \ref{thm_fiberwise_anosov_over_n-dimensional_anosov}.
In Section~\ref{sec_3}, we explain the construction which yields Theorem \ref{thm2}.

\section{Franks--Williams and fiberwise Anosov flows}

\label{sec_2}

\subsection{Definition of fiberwise Anosov flows and examples}
Let $M$ be a closed smooth manifold and let $\TT^d\to E\stackrel\pi\to M$ be an {\it affine torus bundle}, that is, a locally trivial fiber bundle with structure group $SL(d,\ZZ)\ltimes\TT^d$ acting on $\TT^d$ by affine transformations $(A,v)x=Ax+v$. Denote by $\TT^d_x$ the fiber over $x\in M$ and by $VE = \ker D\pi$ the {\it vertical subbundle} of the tangent bundle $TE$, which consists of vectors tangent to the fibers of $\pi$.

\begin{defi} 
\label{def_anosov}
Given an affine torus bundle $\TT^d\to E\stackrel\pi\to M$ and a flow $\varphi^t\colon M\to M$, a flow $\Phi^t\colon E\to E$ is called a {\it fiberwise Anosov flow} over $\varphi^t\colon M\to M$ if the following two conditions hold:
\begin{enumerate}
\item $\Phi^t$ fibers over $\varphi^t$, that is, the diagram 
$$
\xymatrix{
E\ar_\pi[d]\ar^{\Phi^t}[r] & E\ar_\pi[d]\\
M\ar^{\varphi^t}[r] & M
}
$$
commutes;
\item there exists a $D\Phi^t$-invariant splitting $VE=V^s\oplus V^u$ a constant $C>0$ and a constant $\lambda\in(0,1)$ such that for $t>0$
\begin{align*}
\|D\Phi^tv^s\|\le C\lambda^t\|v^s\|,\,\, v^s\in V^s \\
 \|D\Phi^{-t}v^u\|\le C\lambda^t\|v^u\|,\,\, v^s\in V^u
\end{align*}
\end{enumerate}
\end{defi}
(Cf. the definition of fiberwise Anosov flow in~\cite{FG}.) 

Among the fiberwise Anosov flows, we single out the following type
\begin{defi}
A flow $\Phi^t\colon E\to E$ is called a fiberwise \emph{affine} Anosov flow over $\varphi^t\colon M\to M$ if it is a fiberwise Anosov flow and, for each $x\in M$ and $t\in\RR$, the map $\Phi^t\colon\TT^d_x\to\TT^d_{\varphi^t(x)}$ is an affine diffeomorphism (note that the property of being affine is independent of the choice of trivializing charts at $x$ and $\varphi^t(x)$ because of our assumption on the structure group);
\end{defi}

\begin{rema}
It follows from the definition that the splitting $VE=V^s\oplus V^u$ is also affine.
\end{rema}
In this article, we will only be interested in the case when the base flow $\varphi^t$ is Anosov. By using the standard cone argument one can easily verify the following.
\begin{prop}
\label{prop_anosov}
Let $\Phi^t\colon E\to E$ be a fiberwise Anosov flow which fibers over an Anosov flow $\varphi^t\colon M\to M$. Then $\Phi^t$ is Anosov as a flow on the total space $E$. 
\end{prop}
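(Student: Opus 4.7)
The plan is to build invariant cone fields on $TE$ by combining the hyperbolicity data on the base with that on the fibers, and then to conclude by the standard cone-field argument.

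First, I would equip $E$ with a Riemannian metric for which $\pi\colon E\to M$ is a Riemannian submersion; this produces a horizontal distribution $HE$ complementary to $VE$, together with an identification $HE\cong\pi^\ast TM$. Using this identification, lift the Anosov splitting $TM=E^s_M\oplus\RR X\oplus E^u_M$ of $\varphi^t$ to a splitting $HE=\widetilde{E^s_M}\oplus\RR\widetilde X\oplus\widetilde{E^u_M}$. Combining with the hypothesized vertical splitting $VE=V^s\oplus V^u$ yields a (non-invariant) decomposition
$$
TE=V^s\oplus\widetilde{E^s_M}\;\oplus\;\RR\widetilde{X}\;\oplus\;V^u\oplus\widetilde{E^u_M}.
$$
Two observations make $D\Phi^t$ close to respecting this splitting: $VE$, $V^s$ and $V^u$ are genuinely $D\Phi^t$-invariant; and the projection $D\pi\colon TE\to\pi^\ast TM$ intertwines $D\Phi^t$ with $D\varphi^t$, so that the splitting is invariant \emph{modulo} $VE$. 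In particular, writing $D\Phi^t$ as a block matrix relative to $VE\oplus HE$, the block on $VE$ is $D\Phi^t|_{VE}$ (vertically hyperbolic by assumption), the induced block on the quotient $TE/VE$ is $D\varphi^t$ (hyperbolic on the base), and the off-diagonal blocks are uniformly bounded because $E$ is compact.

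Next, for a small parameter $\alpha>0$ define a stable cone field $C^s_\alpha$ on $TE$ as those vectors whose components in $V^u$, $\widetilde{E^u_M}$ and $\RR\widetilde{X}$ are bounded by $\alpha$ times the norm of the component in $V^s\oplus\widetilde{E^s_M}$, and analogously an unstable cone field $C^u_\alpha$ with the roles reversed. The key claim is that for some $t_0>0$ we have $D\Phi^{t_0}(\overline{C^s_\alpha})\subset\operatorname{int} C^s_\alpha\cup\{0\}$, and that vectors in $C^s_\alpha$ are contracted by a definite factor under $D\Phi^{t_0}$ (and dually for the unstable cone with $-t_0$). This follows because the diagonal blocks are simultaneously uniformly hyperbolic at a common time scale, so after choosing $t_0$ large the bounded off-diagonal contributions are swamped by the contraction/expansion.

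Finally, the standard cone-field argument (intersecting forward and backward iterates of the cones) produces a continuous $D\Phi^t$-invariant splitting $TE=E^s_E\oplus\RR\dot\Phi\oplus E^u_E$ satisfying the Anosov estimates, so $\Phi^t$ is Anosov on $E$. The main obstacle in this plan is the verification that the off-diagonal "twisting" blocks of $D\Phi^t$ are dominated by the two simultaneous hyperbolicities; this rests on the compactness of $E$ to get uniform bounds on the twist, combined with the uniformity of the vertical rate $\lambda$ in Definition~\ref{def_anosov} and of the base Anosov rates.
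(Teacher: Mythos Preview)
Your proposal is correct and is precisely the ``standard cone argument'' that the paper invokes without giving any details; the paper's entire proof is the single sentence preceding the statement. Your sketch supplies exactly the structure one needs: the block--upper--triangular form of $D\Phi^t$ with respect to $VE\oplus HE$ (coming from the $D\Phi^t$--invariance of $VE$ and the identification of the quotient action with $D\varphi^t$), and cone fields built from the product of the vertical and lifted base splittings.

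One small imprecision worth flagging: the off--diagonal block $B_t\colon HE\to VE$ is bounded for each fixed $t$ by compactness, but it is \emph{not} bounded uniformly in $t$; it satisfies the cocycle relation $B_{t+s}=A_tB_s+B_tC_s$ and can grow. What actually makes the cone argument go through is that the \emph{ratio} of the $V^s$--component picked up from $\widetilde{E^u_M}$ to the surviving $\widetilde{E^u_M}$--component stays bounded along forward orbits (a Gr\"onwall/Duhamel estimate using the contraction on $V^s$), and dually in backward time. So the cone aperture $\alpha$ cannot be taken arbitrarily small a priori; one either chooses $\alpha$ large enough to absorb this bounded ratio and then checks the stable and unstable cones are still complementary (possibly after passing to an adapted metric), or one argues via successive graph transforms. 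Your final paragraph already points at this domination issue, so the plan is sound --- just be aware that ``bounded off--diagonal swamped by large $t_0$'' is not literally the mechanism.
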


The following partial converse to Proposition~\ref{prop_anosov} also holds. 
\begin{prop}
\label{prop_anosov2}
Let $\Phi^t\colon E\to E$ be a fiberwise (affine) flow which fibers over an Anosov flow $\varphi^t\colon M\to M$. Assume that $\Phi^t$ is Anosov (as a flow on the total space $E$) then $\Phi^t$ is a fiberwise (affine) Anosov flow, \ie also satisfies condition~2 of Definition~\ref{def_anosov}.
\end{prop}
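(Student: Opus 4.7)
Plan.  The plan is to restrict the ambient Anosov splitting $TE = E^s \oplus E^\Phi \oplus E^u$ to the vertical bundle $VE = \ker D\pi$. Two basic observations set the stage: $VE$ is $D\Phi^t$-invariant because $\Phi^t$ sends fibers to fibers, and $VE \cap E^\Phi = 0$ pointwise because the generator of $\Phi^t$ projects under $D\pi$ to the generator of $\varphi^t$, which is nowhere zero as $\varphi^t$ is Anosov. Thus the flow direction on $E$ is nowhere vertical.

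Set $V^s := VE \cap E^s$ and $V^u := VE \cap E^u$. These are $D\Phi^t$-invariant subsets with $V^s \cap V^u \subset E^s \cap E^u = 0$ pointwise, and once they are shown to be continuous subbundles summing to $VE$ they automatically inherit the uniform contraction/expansion rates of $E^s$ and $E^u$. The only nontrivial point is the direct-sum equality $VE = V^s \oplus V^u$. To establish it, consider the $D\Phi^t$-equivariant projection $P\colon TE \to E^s \oplus E^u$ along $E^\Phi$; since $\ker P = E^\Phi$ is transverse to $VE$, the restriction $P|_{VE}$ is an injective bundle map, so $P(VE)$ is a rank-$d$ continuous $D\Phi^t$-invariant subbundle of $E^s \oplus E^u$, a uniformly hyperbolic bundle with no neutral direction. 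The problem thus reduces to the standard fact that any continuous $D\Phi^t$-invariant subbundle $G$ of a uniformly hyperbolic $F = F^s \oplus F^u$ splits as $G = (G \cap F^s) \oplus (G \cap F^u)$: the closed $F^u$-cone is forward-invariant and forward-expanded, and taking normalized forward limits of $D\Phi^t v$ for $v \in G$ produces unit vectors in $G \cap F^u$ via closedness of $G$; the symmetric argument with backward iteration recovers $G \cap F^s$, and domination between $F^s$ and $F^u$ forces the graph between these pieces to vanish. Pulling this splitting back through the isomorphism $P|_{VE}\colon VE \to P(VE)$ gives $VE = V^s \oplus V^u$.

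In the fiberwise-affine setting, $D\Phi^t|_{VE}$ is the linear part of the affine map $\Phi^t\colon \TT^d_x \to \TT^d_{\varphi^t x}$ and is therefore constant along each fiber. The dynamically-characterized splitting $V^s \oplus V^u$ must then also be constant along each fiber, i.e.\ affine in the sense of the remark.

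The main obstacle is the invariant-subbundle splitting lemma for $P(VE)$. It is standard in the dynamicist's toolkit but genuinely uses more than invariance: the unstable summand of a vector $v \in G$ is only directly extracted as a normalized forward limit at a point of the $\omega$-limit set of the base orbit, so one must combine $\Phi^t$-invariance with upper-semicontinuity of the fiberwise dimensions of $G \cap F^s$ and $G \cap F^u$ to promote the splitting from recurrent base points to every point of $M$. Handling this transfer carefully — rather than stopping at ``splits on the nonwandering set'' — is the technical heart of the proof.
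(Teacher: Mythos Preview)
Your approach is different from the paper's and considerably more elaborate. The paper's proof is only a few lines and exploits the base Anosov structure directly: from $D\pi\circ D\Phi^t=D\varphi^t\circ D\pi$ and the fact that \emph{both} flows are Anosov, growth rates immediately give $D\pi(E^s_\Phi)\subset E^s_\varphi$, $D\pi(X_\Phi)\subset X_\varphi$, $D\pi(E^u_\Phi)\subset E^u_\varphi$. Surjectivity of $D\pi$ makes each restriction onto, so with $V^s:=\ker(D\pi|_{E^s_\Phi})$ and $V^u:=\ker(D\pi|_{E^u_\Phi})$ a rank--nullity count gives $\dim V^s+\dim V^u=d$; since $V^s\cap V^u=0$ this already yields $VE=V^s\oplus V^u$. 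No invariant-subbundle splitting lemma, no semicontinuity argument, and no transfer from recurrent points is needed. Your route, by contrast, works almost entirely inside $TE$ and uses the base hypothesis only through $X_\varphi\neq 0$; that is more intrinsic, but at the cost of the machinery you describe.

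There is also a genuine gap in your final step. Pulling back $P(VE)\cap E^s$ along the isomorphism $P|_{VE}$ yields $VE\cap(E^s\oplus E^\Phi)$, not $V^s=VE\cap E^s$: knowing $VE\cap E^\Phi=0$ does not by itself exclude a vertical vector of the form $v^s+cX_\Phi$ with $c\neq 0$. If such a vector existed, your pulled-back ``stable'' summand would contain a direction that is not uniformly contracted, so you would not have verified condition~2. You can close the gap --- e.g.\ normalize $D\Phi^t(v^s+cX_\Phi)$ as $t\to+\infty$ to produce a unit vector in $VE\cap E^\Phi$ at an $\omega$-limit point, contradicting $VE\cap E^\Phi=0$ --- but note that this argument already shows $VE\subset E^s\oplus E^u$, at which point $P|_{VE}$ is the identity and the whole projection detour is unnecessary: the splitting lemma applied directly to $VE\subset E^s\oplus E^u$ is the full statement. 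The paper sidesteps all of this by observing that $D\pi$ intertwines the two Anosov splittings.
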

\begin{proof} Using the fact that both $\Phi^t$ and $\varphi^t$ are Anosov one can easily verify that $D\pi\colon TE\to TM$ respects the Anosov splittings $TE=E^s_\Phi\oplus X_\Phi\oplus E^u_\Phi$ and $TM=E^s_\varphi\oplus X_\varphi\oplus E^u_\varphi$ and that the restrictions $D\pi\colon E^s_\Phi\to E^s_\varphi$, $D\pi\colon X_\Phi\to X_\varphi$ and $D\pi\colon E^u_\varphi\to E^u_\varphi$ are onto. We let
$$
V^s=\ker(D\pi \colon E^s_\Phi\to E^s_\varphi),\,\,\, V^u=\ker(D\pi \colon E^u_\Phi\to E^u_\varphi).
$$
Clearly $V^s$ and $V^u$ are subbundles of $VE$ which are transverse and satisfy the Anosov property. It remains to notice that by dimension count $V^s$ and $V^u$ span $VE$.
\end{proof}


\begin{example}
\label{ex_suspension}
Let $A\colon \TT^d\to\TT^d$ be a hyperbolic automorphism. Let
$$
M_A= \TT^d\times [0,1]/(x,1) \sim (Ax, 0)
$$
be its mapping torus and $\Phi^t_A\colon M_A\to M_A$ the suspension flow given by $\Phi^t_A\colon(x,s)\mapsto (x,s+t) \mod\ZZ$, where $\ZZ$ is the Deck covering group of the obvious covering $\TT^d\times\RR\to M_A$. Then $\Phi^t$ is a fiberwise affine Anosov flow which fibers over $\varphi^t\colon S^1\to S^1$ given by $\varphi^t\colon s\mapsto s+t \mod\ZZ$.
\end{example}

\begin{example}
Let $B\colon \TT^d\to\TT^d$ and $A\colon \TT^n\to\TT^n$ be two hyperbolic automorphisms. Let $\Phi^t_A\colon M_A\to M_A$ and $\Phi^t_{B\times A}\colon M_{B\times A}\to M_{B\times A}$ be the suspension flows of $A$ and $B\times A$, respectively. The map $\tilde\pi\colon \TT^d\times \TT^n\times\RR\to\TT^n\times\RR$ given by $(x,y,s)\mapsto (y,s)$ is a trivial affine $\TT^d$ bundle. This bundle map, obviously, commutes with the $t$-translation in the last coordinate and intertwines the $\ZZ$-action of the Deck group on $\TT^d\times\TT^n\times\RR$ with the $\ZZ$-action of  the Deck group on $\TT^n\times\RR$. Hence $\pi=\tilde\pi/\ZZ\colon M_{B\times A}\to M_A$ is an affine $\TT^d$ bundle and $\Phi^t_{B\times A}$ is a fiberwise affine Anosov flow over the suspension flow $\Phi^t_A$. When $n=2$ this provides examples for Case~1 of Theorem~\ref{thm1}.
\label{ex_suspension2}
\end{example}

\begin{example}
\label{ex_tomter}
Tomter~\cite{T} exhibited an algebraic Anosov flow $\Phi^t\colon G/\Gamma\to G/\Gamma$ of ``mixed type", where $G=PSL(2,\RR)\ltimes\RR^4$ and $\Gamma=\Gamma_1\ltimes \ZZ^4$ with $\Gamma_1<PSL(2,\RR)$ a cocompact lattice. If we view $G/\Gamma$ as an affine $\TT^4$ bundle over $PSL(2,\RR)/\Gamma_1$ then $\Phi^t$ can be viewed as a fiberwise affine Anosov flow over the geodesic flow $\varphi^t\colon PSL(2,\RR)/\Gamma_1\to PSL(2,\RR)/\Gamma_1$. For this example the dimensions of stable and unstable distributions are equal to 3, \ie $s=u=3$. In particular this implies  that the conclusion of Case~2 on Theorem~\ref{thm1} is optimal.

More examples of this type can be build. A general method is described in~\cite[Section 3.4.2]{BM13}.
\end{example}

Another open question regarding fiberwise Anosov flows is the following.
\begin{question}
Does there exist a non-transitive Anosov flow $\varphi^t\colon M\to M$ on a manifold $M$ and a fiberwise affine Anosov flow $\Phi^t\colon E\to E$ which fibers over $\varphi^t$ with $s=u\ge 3$?
\end{question}

Finally, we notice that a fiberwise Anosov flow over a suspension of an Anosov diffeomorphism is itself a suspension.
\begin{prop} \label{prop_suspensions}
 Let $\Phi^t\colon E\to E$ be a fiberwise Anosov flow which fibers over an Anosov flow $\varphi^t\colon M\to M$. Suppose $\varphi^t$ is a suspension of an Anosov diffeomorphism $f\colon N \to N$. Then $\Phi^t$ is a suspension of an Anosov diffeomorphism $\hat f \colon \hat{N} \to \hat{N}$, where $\hat{N}$ is a manifold that fibers over $N$ with fibers $\TT^d$.\end{prop}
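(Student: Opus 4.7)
\textit{Proof plan.} The strategy is to produce a global cross-section for $\Phi^t$ by pulling back the cross-section of $\varphi^t$ through the bundle projection. Since $\varphi^t$ is a suspension of $f\colon N\to N$, there is a closed codimension one submanifold $N\subset M$ transverse to $\varphi^t$, met by every orbit, with $\varphi^t$-return map $f$ and smooth return time $\tau\colon N\to(0,\infty)$. The first step is to set $\hat N:=\pi^{-1}(N)\subset E$; because $\pi\colon E\to M$ is an affine $\TT^d$-bundle, $\hat N$ is a smooth submanifold and $\pi|_{\hat N}\colon \hat N\to N$ is an affine $\TT^d$-bundle, which will give the required description of $\hat N$.

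Next I would check that $\hat N$ is a global cross-section for $\Phi^t$. Transversality holds because at every $x\in\hat N$ the flow vector $X_\Phi(x)$ projects by $D\pi$ to $X_\varphi(\pi(x))$, which is transverse to $T_{\pi(x)}N$; that every orbit meets $\hat N$ is immediate from the fact that its $\pi$-projection is a $\varphi^t$-orbit, hence meets $N$. The first return map is then $\hat f(x):=\Phi^{\tau(\pi(x))}(x)$, and by construction $\pi\circ\hat f=f\circ\pi$, so $\hat f$ is a smooth fiber-preserving diffeomorphism of the $\TT^d$-bundle $\hat N\to N$, and $\Phi^t$ is the suspension of $\hat f$ under the roof function $\tau\circ\pi$.

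The main content is to verify that $\hat f$ is Anosov. By Proposition~\ref{prop_anosov}, $\Phi^t$ is Anosov with splitting $TE=E^s_\Phi\oplus\langle X_\Phi\rangle\oplus E^u_\Phi$. Along $\hat N$ the flow direction is transverse to $T\hat N$, so the projection $\rho\colon TE|_{\hat N}\to T\hat N$ with kernel $\RR X_\Phi$ is a well-defined bundle map. Setting $\hat E^s:=\rho(E^s_\Phi|_{\hat N})$ and $\hat E^u:=\rho(E^u_\Phi|_{\hat N})$ produces transverse subbundles of $T\hat N$ which span it by dimension count, and $D\hat f$-invariance is automatic since $\hat f$ is a time-$\tau\circ\pi$ map of $\Phi^t$. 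The main obstacle I anticipate is the bookkeeping needed to transfer honest uniform contraction/expansion estimates from $D\Phi^t$ on $E^{s,u}_\Phi$ to $D\hat f$ on $\hat E^{s,u}$; since $\tau$ is bounded above and below, each step of $\hat f$ corresponds to a bounded flow time, so the exponential estimates carry over directly, and $\hat f$ is Anosov as required.
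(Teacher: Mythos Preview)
Your proposal is correct and follows exactly the same approach as the paper: take $\hat N=\pi^{-1}(N)$ as a global cross-section for $\Phi^t$ and observe that the return map is Anosov. The paper's proof is a two-line sketch (``clearly, $\hat N$ is a section for $\Phi^t$ and the return map is an Anosov diffeomorphism''), whereas you spell out the transversality, the return-time description, and the transfer of the hyperbolic splitting via projection along the flow direction; these are precisely the standard details being suppressed by the word ``clearly.''
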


\begin{proof}
We view $N\subset M$ as a section for $\varphi^t$. Let $\hat N=\pi^{-1}(N)$. Then, clearly, $\hat N$ is a section for $\Phi^t$ and the return map is an Anosov diffeomorphism.
\end{proof}

\begin{remark}
When $\phi^t$ is a suspension of an Anosov nilmanifold automorphism (e.g., when $M$ is 3-dimensional), with more work, one can show that $\hat{N}$ is finitely covered by a nilmanifold. Then $\Phi^t$ is orbit equivalent to the suspension of an affine Anosov diffeomorphism on an infranilmanifold.
\end{remark}



\subsection{The Franks-Williams construction}

In this section we briefly recall the beautiful 3-dimensional Franks-Williams construction omitting all technical details. Then we explain that the higher dimensional construction suggested in~\cite{FW}, if they exists, belong to the class of fiberwise affine flows. Further we point out the problematic step in the higher dimensional construction and discuss relations to our Theorem~\ref{thm1}.

Let $A\colon\TT^2\to\TT^2$ be a hyperbolic automorphism. Deform $A$ in a neighborhood of a fixed point $p$ to obtain a $DA$ diffeomorphism $f\colon\TT^2\to\TT^2$ such that the non-wandering set of $f$ consists of the repelling fixed point $p$ and a 1-dimensional hyperbolic attractor whose stable foliation $W^s_f$ is contained in the stable foliation of $A$ and, hence, is linear.  Consider the suspension flow $\varphi^t\colon M_f\to M_f$  (similarly to Example~\ref{ex_suspension}). The key idea of Franks and Williams is to remove a small tubular neighborhood of the repelling periodic orbit (corresponding to $p$) and then to equip certain twisted double of this manifold with a non-transitive Anosov flow. Namely, after removing the tubular neighborhood we obtain a manifold $\bar M_f$ with torus boundary which is equipped with the flow $\varphi^t$ whose orbits are transverse to the boundary and flow ``inwards." Also consider another copy of $\bar M_f$, say $\bar{\bar M}_f$, which is equipped with $\varphi^{-t}$ whose orbits are transverse to the boundary and flow ``outwards."  Then we paste together $(\bar M_f,\varphi^t)$ and $(\bar{\bar M}_f, \varphi^{-t})$ using a gluing diffeomorphism $h\colon\TT^2\to\TT^2$ which identifies the boundaries of $\bar M_f$ and $\bar{\bar M}_f$. The gluing diffeomorphism $h$ is designed so that the week stable foliation of the attractor in $\bar M_f$ is transverse to the weak unstable foliation of the repeller in $\bar{\bar M}_f$ on the gluing torus as indicated in Figure~\ref{fig1}. This control on the foliations on the torus boundary allows to deduce the Anosov property of the surgered flow.

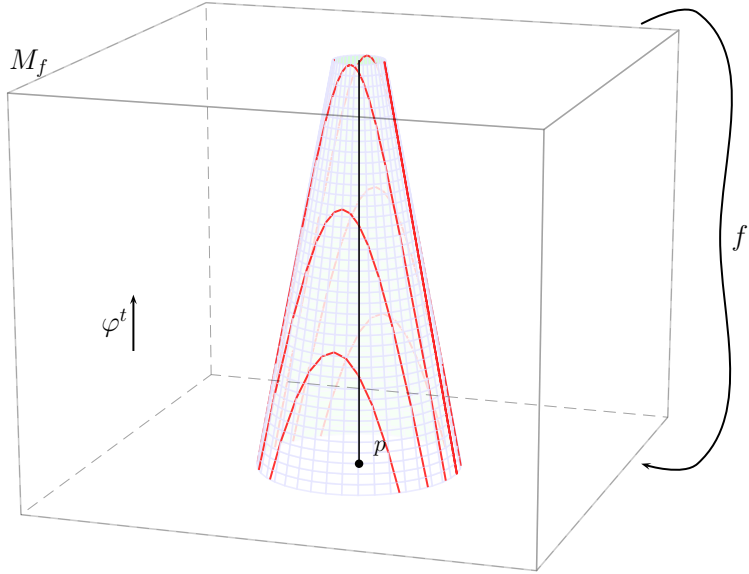
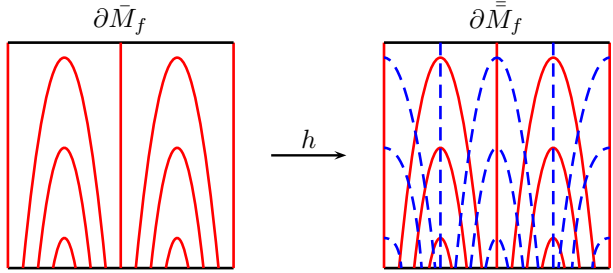
\begin{figure}[h]
\begin{center}
\begin{subfigure}[b]{0.75\textwidth}
 \begin{pspicture}(-5,-1.6)(5.2,6.2)
\psset{viewpoint=60 20 20,Decran=60}
\scalebox{0.75}{
\psline[linewidth=1pt]{->}(-4,2)(-4,3)

\uput{0.1}[180](-4,2.5){\Large$\varphi^t$}

\pscurve[linewidth=1pt]{->}(5,7.8)(6,8)(6.4,4)(6,0)(5,0) %
\put(-6.2,7){\Large$M_f$}
\uput{0.2}[0](6.4,4){\Large$f$}

\psSolid[object=troncconecreux, 
opacity=0.8,
r0=2,r1=.5,h=8,
incolor=green!20,
linecolor=blue!10,
mode=4,
ngrid= 40 40,
intersectiontype=0,
intersectionplan={
[1 0 0 0]
[1 0 0 -0.5]
[1 0 0 -1]
[1 0 0 -1.5]
[1 0 0 1.5]
[1 0 0 0.5]
[1 0 0 1]},
intersectioncolor=(rouge),
intersectionlinewidth= 1]

\psSolid[object=line,
args=0 0 0 0 0 8]

\psPoint(0,0,0){Y}
\uput{0.3}[35](Y){\large$p$}
\psdot[dotsize=0.15](Y)

\psSolid[
object=parallelepiped,
a=10,b=10,c=8,
opacity=0.2,
action=draw,
](0,0,4)}
\end{pspicture} \caption{The suspended DA flow} \label{fig_Franks_Williams_DA}
\end{subfigure}
\begin{subfigure}[b]{0.75\textwidth}
 \scalebox{1}{
  \begin{pspicture}(0,0)(8,4)
\psline[linewidth=1pt](0,0)(3,0)
\psline[linewidth=1pt,linecolor=red](0,0)(0,3)
\psline[linewidth=1pt,linecolor=red](3,0)(3,3)
\psline[linewidth=1pt](0,3)(3,3)
\psline[linewidth=1pt,linecolor=red](1.5,0)(1.5,3)

\parabola[linewidth=1pt,linecolor=red](0.2,0)(0.75,2.8)
\parabola[linewidth=1pt,linecolor=red](0.4,0)(0.75,1.6)
\parabola[linewidth=1pt,linecolor=red](0.6,0)(0.75,0.4)

\rput[bl]{0}(1.5,0){%
\parabola[linewidth=1pt,linecolor=red](0.2,0)(0.75,2.8)
\parabola[linewidth=1pt,linecolor=red](0.4,0)(0.75,1.6)
\parabola[linewidth=1pt,linecolor=red](0.6,0)(0.75,0.4)}

\rput[bl]{0}(5,0){%
\psline[linewidth=1pt](0,0)(3,0)
\psline[linewidth=1pt,linecolor=red](0,0)(0,3)
\psline[linewidth=1pt,linecolor=red](3,0)(3,3)
\psline[linewidth=1pt](0,3)(3,3)
\psline[linewidth=1pt,linecolor=red](1.5,0)(1.5,3)
\parabola[linewidth=1pt,linecolor=red](0.2,0)(0.75,2.8)
\parabola[linewidth=1pt,linecolor=red](0.4,0)(0.75,1.6)
\parabola[linewidth=1pt,linecolor=red](0.6,0)(0.75,0.4)

\rput[bl]{0}(1.5,0){%
\parabola[linewidth=1pt,linecolor=red](0.2,0)(0.75,2.8)
\parabola[linewidth=1pt,linecolor=red](0.4,0)(0.75,1.6)
\parabola[linewidth=1pt,linecolor=red](0.6,0)(0.75,0.4)}

\psbezier[linewidth=1pt,linecolor=blue,linestyle=dashed](0,2.8)(0.4,2.8)(0.67,0.1)(0.68,0)
\psbezier[linewidth=1pt,linecolor=blue,linestyle=dashed](0,1.6)(0.4,1.6)(0.49,0.1)(0.5,0)
\psbezier[linewidth=1pt,linecolor=blue,linestyle=dashed](0,0.4)(0.2,0.4)(0.22,0.1)(0.23,0)
\psline[linewidth=1pt,linecolor=blue,linestyle=dashed](0.75,0)(0.75,3)
\psline[linewidth=1pt,linecolor=blue,linestyle=dashed](2.25,0)(2.25,3)
\rput[bl]{0}(0.75,0){%
\parabola[linewidth=1pt,linecolor=blue,linestyle=dashed](0.2,0)(0.75,2.8)
\parabola[linewidth=1pt,linecolor=blue,linestyle=dashed](0.4,0)(0.75,1.6)
\parabola[linewidth=1pt,linecolor=blue,linestyle=dashed](0.6,0)(0.75,0.4)}
}

\psbezier[linewidth=1pt,linecolor=blue,linestyle=dashed](8,2.8)(7.6,2.8)(7.33,0.1)(7.32,0)
\psbezier[linewidth=1pt,linecolor=blue,linestyle=dashed](8,1.6)(7.6,1.6)(7.51,0.1)(7.5,0)
\psbezier[linewidth=1pt,linecolor=blue,linestyle=dashed](8,0.4)(7.8,0.4)(7.78,0.1)(7.77,0)

\psline[linewidth=.9pt]{->}(3.5,1.5)(4.5,1.5)
\uput{0.1}[90](4,1.5){$h$}
\uput{0.1}[90](1.5,3){$\partial\bar{M}_f$}
\uput{0.1}[90](6.5,3){$\partial\bar{\bar M}_f$}
\end{pspicture}} \caption{Quarter-turn gluing diffeomorphism $h$ which takes solid-line Reeb foliation to the dashed one assures needed transversality.} \label{fig_Quarter-turn}
\end{subfigure}
\end{center}
      \caption{The Franks-Williams construction} \label{fig1}
\end{figure}

For higher dimensional examples Franks and Williams suggested looking at the product $B\times f\colon \TT^d\times\TT^2\to\TT^d\times\TT^2$, where $f$ is the DA diffeomorphism, as  before, and $B$ is a hyperbolic automorphism. The idea is to view the suspension flow of $B\times f$, which we denote by $\varphi^t_B$ as an affine Anosov flow over the suspension flow $\varphi^t$ of $f$ (cf. Example~\ref{ex_suspension2}) and try performing ``twisted double surgery" so that the resulting flow fibers over the 3-dimensional Franks-Williams flow. 

For suspension flow $\varphi^t_B$ the mapping torus $ M_B\times \{p\}$ is a repeller. Removing a neighborhood of $M_B\times \{p\}$ yields a manifold with boundary $M_B\times S^1$  equipped with an inward flow $\varphi^t_B$. The same construction using a hyperbolic automorphism $C\colon \TT^d\to\TT^d$ yields a manifold with boundary $M_C\times S^1$ which we equip with an outward flow $\varphi^{-t}_C$ (\cite[pp.~166-167]{FW} suggest taking $C=B^{-1}$). Now one tries to paste together the flows $\varphi_B^t$ and $\varphi_C^{-t}$ using a gluing $H\colon M_B\times S^1\to M_C\times S^1$ which puts the weak stable distribution of $\varphi_B^t$ transversely to the weak unstable distribution of $\varphi_C^{-t}$. Note that both $\varphi_B^t$ and $\varphi_C^{-t}$ are fiberwise affine flows and both $M_B\times S^1$ and $M_C\times S^1$ are affine $\TT^d$ bundles over $\TT^2$. Hence, we can try to look for a fiberwise affine $H$ which fits into the commutative diagram
$$
\xymatrix{
{M_B\times S^1}\ar[d]\ar^H[r] & M_C\times S^1\ar[d]\\
\TT^2\ar^{h}[r] & \TT^2
}
$$
where $h$ is the ``quarter-turn" gluing diffeomorphism from the Franks-Williams 3-dimensional surgery construction. After cutting $M_C$ and $M_B$ along the transverse tori, the bundles $M_B\times S^1\to\TT^2$ and $M_C\times S^1\to\TT^2$ both become trivial bundles $\TT^d\times[0,1]\times S^1\to [0,1]\times S^1$, hence $H$ must have the form
\begin{equation}
\label{eq_H}
H(x,y)=(Dx, h(y)),
\end{equation}
where $D\colon\TT^d\to\TT^d$ is an automorphism. Because formula~(\ref{eq_H}) must induce a diffeomorphism of mapping tori we have the following condition
\begin{equation}
\label{eq_BC}
C=DBD^{-1}.
\end{equation}
This implies that the suspension flows $\varphi^t_B$ and $\varphi^t_C$ are conjugate. Therefore, if we look for $H$ in the affine form~(\ref{eq_H}) then the stable distributions of $\varphi_B^t$ and $\varphi_C^t$ in the fiber are identified via $H$ along the boundary. Hence, the stable distribution in the fiber of $\varphi_B^t$ is identified with the unstable distribution in the fiber of $\varphi_C^{-t}$ via $H$. We conclude that the sought transversality cannot be achieved in this way.
\begin{rema} We do not know if one can repair this construction by considering more general glueing  diffeomorphisms $H\colon M_B\times S^1\to M_C\times S^1$. Note that~(\ref{eq_BC}) is a condition for manifolds $M_B$ and $M_C$ to be homeomorphic.
\end{rema}

Note also that Theorem~\ref{thm1} yields the following corollary.

\begin{coro}
Let $\varphi^t$ be a 3-dimensional non-transitive Anosov flow. Assume that $\Phi^t\colon E\to E$ is a fiberwise Anosov flow over $\varphi^t$. Then $s=u\ge 3$. (Here $s$ and $u$ are dimensions of stable and unstable distributions of $\Phi^t$.)
\end{coro}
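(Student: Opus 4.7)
The plan is to deduce this corollary directly from Theorem~\ref{thm1}, which provides a dichotomy for fiberwise Anosov flows on affine torus bundles over a 3-dimensional Anosov base flow. I would first observe that transitivity is an invariant of topological orbit equivalence: if a flow is topologically orbit equivalent to a transitive flow then it must itself have a dense orbit, since orbit equivalences send orbits to orbits and preserve their closures. Suspension flows of Anosov diffeomorphisms of $\TT^2$ are well known to be transitive (for example, because an Anosov automorphism of $\TT^2$ is transitive and its suspension inherits a dense orbit from the base).

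Next, since $\varphi^t$ is assumed to be non-transitive, it cannot be topologically orbit equivalent to a suspension flow of an Anosov automorphism of $\TT^2$. Hence alternative~1 in the conclusion of Theorem~\ref{thm1} is excluded, so we must be in alternative~2, which gives exactly $s=u\ge 3$ for the dimensions of the stable and unstable distributions of the fiberwise Anosov flow $\Phi^t$ on $E$. This completes the proof.

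The ``main obstacle'' is really not an obstacle at all here: the substantive work is already packaged in Theorem~\ref{thm1}. The only thing to check is the elementary observation that non-transitivity passes through orbit equivalence, which is immediate from the definition.
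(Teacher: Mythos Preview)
Your proposal is correct and matches the paper's own reasoning: the paper does not give a separate proof but simply notes that Theorem~\ref{thm1} yields the corollary, having already observed in the introduction that suspension flows are transitive, so a non-transitive base rules out alternative~1 and forces alternative~2. Your added remark that transitivity is preserved under topological orbit equivalence is the one small detail the paper leaves implicit.
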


\subsection{Proof of Theorem~\ref{thm_fiberwise_anosov_over_n-dimensional_anosov}}

Let $a$ and $b$ be the periodic orbits of $\varphi^t$ such that $a$ is freely homotopic to $-b$. 

Let $H\colon [0,1]\times S^1\to M$ be an homotopy between $a$ and $-b$. We pull-back $E\to M$ using $H$ to obtain an affine $\TT^d$ bundle $H^*E\to[0,1]\times S^1$. Up to translation on $\TT^d$, the monodromy of this bundle is given by an automorphism $A\colon \TT^d\to\TT^d$. Because the boundary of the base cylinder corresponds to the pair of periodic orbits $a$ and $-b$ we also have that the boundary of $H^*E$ is equipped with the  Anosov flow. It follows that the first return map of the flow to $\TT^d$ along one boundary component, say $\{0\}\times S^1$, is an Anosov diffeomorphism $f\colon \TT^d\to\TT^d$ homotopic to  $A$. And the first return map of the flow to $\TT^d$ along the other boundary component, $\{1\}\times S^1$, is an Anosov diffeomorphism $g\colon \TT^d\to\TT^d$ homotopic to  $A^{-1}$. 
 By work of Franks and Manning~\cite{Fr, M}, the automorphism $A$ is hyperbolic and $f$ is conjugate to $A$. Hence, the dimension of the stable subspace of $A$ must be $\dim V^s$ --- the dimension of the stable bundle of $f$. Similarly, the dimension of the stable subspace of $A^{-1}$ (= dimension of the unstable subspace of $A$) is $\dim V^s$ --- the dimension of the stable bundle of $g$. Hence $d$ is even and the fiberwise stable and unstable distributions have same dimension.

To finish the proof it remains to rule out the case when $d=2$. 
To do that we will need the following simple lemma.
\begin{lemm} \label{lem_no_circle_leaves}
Let $\TT^2\to E\to M$ be an affine bundle and let $\Phi^t\colon E\to E$ be a fiberwise Anosov flow. Denote by $V^u\oplus V^s$ the vertical Anosov splitting and by $\mathcal V^u$ the integral foliation of $V^u$. Then $\mathcal V^u$ does not have circle leaves.
\end{lemm}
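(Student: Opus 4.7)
The plan is to argue by contradiction: suppose $\mathcal V^u$ has a compact (circle) leaf $\gamma$ inside some fiber $\TT^2_x$, and derive a contradiction by treating separately the cases where $\gamma$ is essential or null-homotopic in $\TT^2_x$.

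\textbf{Essential case.} Fix an auxiliary Riemannian metric on $E$; since $M$ is compact and the induced fiber metrics vary continuously with the base point, the systoles of the fibers admit a uniform positive lower bound $\ell_0>0$. The fiberwise Anosov estimate then yields
\[
\mathrm{length}\bigl(\Phi^{-t}(\gamma)\bigr)\le C\lambda^{t}\,\mathrm{length}(\gamma)\longrightarrow 0\quad\text{as }t\to+\infty.
\]
On the other hand $\Phi^{-t}$ is a fiber-preserving diffeomorphism and so induces an isomorphism on the fundamental groups of the fibers; hence $\Phi^{-t}(\gamma)$ remains essential in $\TT^2_{\varphi^{-t}(x)}$ and thus has length at least $\ell_0$. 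This is incompatible with the exponential decay above.

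\textbf{Null-homotopic case.} If $\gamma$ is null-homotopic in $\TT^2_x$, then it bounds a closed disk $D\subset\TT^2_x$, and $\mathcal V^u|_D$ is a non-singular one-dimensional foliation of $D$ having $\partial D$ as a leaf. Since $D$ is simply connected the line bundle $V^u|_D$ is orientable, so we may pick a nowhere-vanishing section $X$; this produces a non-vanishing vector field on $D$ which is tangent to $\partial D$. Doubling $D$ across $\partial D$ yields $S^2$, and reflecting $X$ across $\partial D$ gives a continuous nowhere-vanishing vector field on $S^2$ (continuity along the equator is guaranteed precisely by the tangency of $X$ to $\partial D$). This contradicts Poincaré--Hopf, since $\chi(S^2)=2\ne 0$.

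The substantive obstacle is the null-homotopic case: the length/contraction argument of the first case cannot be applied there, because null-homotopic loops are not protected by any systolic lower bound and their lengths are genuinely allowed to shrink to zero under $\Phi^{-t}$. What rules out null-homotopic circle leaves is the purely topological obstruction that the $2$-disk does not admit a non-singular line field tangent to its boundary, a consequence of Poincaré--Hopf applied to its double.
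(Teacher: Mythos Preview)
Your proof is correct. The essential case coincides with the paper's argument. For the null-homotopic case, however, you take a different route. The paper still uses the length-contraction idea: since $\Phi^{-t}(\gamma)$ remains an integral circle of $V^u$ and its length (hence its diameter) tends to zero, eventually it lies in a ball on which, by uniform continuity of the continuous distribution $V^u$ over the compact space $E$, the line field is nearly constant; but a nearly constant line field on a small ball cannot have a closed integral curve. So the dynamical argument \emph{does} extend to the contractible case---your closing remark that it ``cannot be applied there'' is too pessimistic; what changes is that the systolic bound is replaced by the uniform-continuity obstruction to very short integral circles.

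Your alternative, invoking Poincar\'e--Hopf on the double of the bounded disk, is a perfectly valid and classical foliation-theoretic argument: it shows, without any dynamics, that a nonsingular line field on a surface can never have a null-homotopic closed leaf. This has the virtue of being a static, purely topological statement independent of $\Phi^t$, whereas the paper's version keeps the two cases unified under a single ``length $\to 0$'' mechanism and avoids importing the index theorem.
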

\begin{proof} Indeed, if there is a circle $\mathcal C$ tangent to $V^u$ then the length of $\Phi^{-t}(C)$ will go to zero as $t\to\infty$ by the Anosov property. If $\mathcal C$ is contractible then this is impossible because $V^u$ is uniformly continuous. And if $\mathcal C$ is homotopically non-trivial this is also impossible because there exists a lower bound on the length of homotopically non-trivial loops due to compactness of the total space $E$.
\end{proof}

The above lemma implies that, on every torus fiber, the foliation $\mathcal V^u$ does not have Reeb components.
We say that a foliation of the torus $\TT^2$ is {\it parallel to a linear foliation} if the lifts of its leaves to the universal cover $\RR^2$ remain at finite distance from leaves of a linear foliation on $\RR^2$.  
Every $\mathcal V$ on $\TT^2$ with no Reeb components is always parallel to exactly one linear foliation $\mathcal W$. This follows from the fact that the rotation number is well-defined for Reebless foliations. Further, if one varies $\mathcal V$ continuously the corresponding linear foliation $\mathcal W$ also varies continuously.

Assume that there exists a fiberwise Anosov flow $\Phi^t\colon E\to E$, with $\TT^2$ fibers, which fibers over an Anosov flow $\varphi^t$.
Consider the fiberwise Anosov splitting $VE=V^s\oplus V^u$, and the corresponding integral foliations, whose leaves are contained in the $\TT^2$ fibers. Pulling back the ``vertical unstable foliation" to the affine bundle $\TT^2\to H^*E\to[0,1]\times S^1$ and then restricting to the segment $[0,1]\times p\subset [0,1]\times S^1$ yields a continuous one-parameter family of foliations $\mathcal V_t$, $t\in [0,1]$. The foliation $\mathcal V_0$ is the unstable foliation of $f$ and $\mathcal V_1$ is the unstable foliation of $g$ (where $f$ and $g$ are described at the beginning of the proof). For each $t\in[0,1]$, the foliation $\mathcal V_t$ is parallel to a unique linear foliation $\mathcal W_t$ which varies continuously with $t$. Because $f$ is conjugate to $A$, $\mathcal V_0$ is parallel to the linear foliation $\mathcal W_0$ given by the unstable eigendirection of $A$. Similarly, $\mathcal V_1$ is parallel to the linear foliation $\mathcal W_1$ given by the stable eigendirection of $A$. 

Because $\mathcal W_0$ and $\mathcal W_1$ are different linear irrational foliations, the intermediate value theorem yields a $t_0$ such that $\mathcal W_{t_0}$ is a rational foliation. Because $\mathcal V_{t_0}$ is parallel to $\mathcal W_{t_0}$, $\mathcal V_{t_0}$ has rational rotation number, \ie the return map to a circle transversal to $\mathcal V_{t_0}$ has rational rotation number. It follows that $\mathcal V_{t_0}$ has a circle leaf, which is in contradiction with Lemma \ref{lem_no_circle_leaves}.

\subsection{Proof of Theorem~\ref{thm1}}

%

In order to prove Theorem~\ref{thm1}, all we have to do is show that, if $\Phi^t$ is a fiberwise Anosov flow over a $3$-dimensional Anosov flow $\varphi^t$, then Theorem \ref{thm_fiberwise_anosov_over_n-dimensional_anosov} applies except when $\varphi^t$ is orbit equivalent to a suspension, which, together with Proposition \ref{prop_suspensions}, yields Theorem \ref{thm1}. This characterization of suspensions in $3$-manifold follows directly from works of Barbot and Fenley. However, as we did not find this property explicitly stated in the way we needed in their work, we provide a proof. 

\begin{theo}[Barbot, Fenley]\label{t.free} 
Let $\varphi^t\colon M\to M$ be an Anosov flow on a closed $3$-manifold $M$. Then one of the following assertions holds:
\begin{enumerate}
 \item either $\varphi^t$ is topologically equivalent to the suspension flow of an Anosov automorphism of the torus $\TT^2$;
 \item or there exist periodic orbits $a$ and $b$ such that $a$ is freely homotopic to $-b$, \ie to the orbit $b$ with reversed orientation. 
\end{enumerate}
\end{theo}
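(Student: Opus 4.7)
The plan is to use the orbit-space machinery of Barbot and Fenley. Let $\orb$ be the orbit space of the lifted flow $\hflot$ on the universal cover $\wt M$; it is a (possibly non-Hausdorff) topological plane on which $\pi_1(M)$ acts preserving the two transverse $1$-dimensional foliations $\hfs$ and $\hfu$ induced from the weak stable and weak unstable foliations of $\flot$, and periodic orbits of $\flot$ correspond to non-trivial deck transformations with a fixed point in $\orb$. I would split the argument according to whether $\flot$ is $\RR$-covered, i.e., whether the leaf space of $\hfs$ (equivalently, of $\hfu$) is Hausdorff.

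If $\flot$ is not $\RR$-covered, I would invoke Fenley's theorem producing a pair of non-separated leaves of $\hfs$ (or of $\hfu$). From such a pair one extracts a \emph{lozenge} in $\orb$: a region bounded by two half-leaves of $\hfs$ and two half-leaves of $\hfu$, whose two ``corners'' $\alpha, \beta \in \orb$ are simultaneously fixed by a common non-trivial $\gamma \in \pi_1(M)$. A standard argument in Fenley's theory shows that the periodic axes through $\alpha$ and $\beta$ in $\wt M$ are translated by $\gamma$ in \emph{opposite} directions along the flow, which is dictated by the way the four bounding half-leaves are permuted by $\gamma$. After projecting to $M$ and possibly passing to iterates, this produces two periodic orbits $a$ and $b$ of $\flot$ with $a$ freely homotopic to $-b$, giving alternative (2). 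If instead $\flot$ is $\RR$-covered, Barbot's classification provides a further dichotomy: either the bifoliated plane $(\orb,\hfs,\hfu)$ is topologically the standard product on $\RR^2$, in which case Barbot's theorem yields an orbit equivalence with the suspension of an Anosov automorphism of $\TT^2$ and we are in alternative (1); or the bifoliation is of skewed type, in which case there is a canonical $\pi_1(M)$-invariant shift structure on $\orb$ that pairs up each periodic corner with another corner of the same deck transformation carrying the opposite flow orientation, again yielding the pair $a, b$ required by alternative (2). The classical example to keep in mind here is the geodesic flow of a closed hyperbolic surface, where the two orientations of each closed geodesic are precisely a pair of such freely homotopic inverse orbits.

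The main obstacle is orientation bookkeeping. The existence of freely homotopic pairs of periodic orbits from lozenges and from the skew structure is classical Barbot-Fenley material; the subtle step is showing that the free homotopy identifies $a$ with $-b$ rather than with $b$. This reduces to tracking how the common fixing deck transformation $\gamma$ acts on the flow direction at each paired corner, and hinges on the geometric fact that at the two corners of a lozenge, $\gamma$ must translate the two periodic axes in $\wt M$ in opposite directions --- otherwise $\gamma$ could not preserve the lozenge region with its prescribed boundary orientations.
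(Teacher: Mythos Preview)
Your proposal is correct and follows essentially the same route as the paper: the same $\RR$-covered versus non-$\RR$-covered case split, the same reduction to lozenges with periodic corners in both the skewed and the non-$\RR$-covered cases, and the same orientation argument (which the paper isolates as a separate lemma) showing that the common deck transformation contracts the unstable leaf at one corner and expands it at the other, forcing the free homotopy to reverse orientation. One small correction: the orbit space $\orb$ of a $3$-dimensional Anosov flow is always homeomorphic to $\RR^2$ and in particular Hausdorff --- it is the leaf spaces $\leafs$ and $\leafu$ that may be non-Hausdorff.
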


\begin{rema}
 Theorem~\ref{t.free} is essentially given in Corollary~E of \cite{Fen:SBAF}, but the author does not state the fact that the free homotopy can be between two orbits with reversed orientation, so we recall the arguments.
\end{rema}

\begin{rema}
 In the statement of Theorem~\ref{t.free}, $a$ and $b$ might have to be taken as going twice around the periodic orbit of $\flot$. Another possibility is that $a$ and $b$ are actually the same, \ie $a$ is freely homotopic to its inverse.
\end{rema}

We proceed with some preparations for the proof. A pair of periodic orbits which are freely homotopic to the inverse of each other will be found as corners of lozenges that we will now define.

If $\flot$ is an Anosov flow on a $3$-manifold $M$, then we denote by $\mathcal{F}^{ss}$, $\mathcal{F}^{uu}$, $\fs$ and $\fu$ the strong stable, strong unstable, stable and unstable foliations, respectively.
All of these foliations, as well as the flow, lift to the universal cover $\wt M$ of $M$, and we denote the lifts by $\hflot$, $\hfs$, $\hfu$, $\wt{\mathcal{F}}^{ss}$ and $\wt{\mathcal{F}}^{uu}$.
It is well known that $\hfs$ and $\hfu$ are foliations by planes, and Palmeira's theorem then implies that $\wt M$ is homeomorphic to $\RR^3$.

We define the \emph{orbit space} of $\flot$ as $\wt M$ quotiented out by the relation ``being on the
same orbit of $\hflot$ '', and we denote the orbit space by $\orb$.
We also define the \emph{stable} (resp. \emph{unstable}) \emph{leaf space} of $\flot$ as the quotient of 
$\wt M$ by the relation ``being on the same leaf of $\hfs$ (resp. $\hfu$)''. We denote these leaf spaces by $\leafs$ and $\leafu$, respectively.

In general, the stable and unstable leaf spaces are non-Hausdorff. However, the orbit space $\orb$ of an Anosov flow in dimension $3$ is always homeomorphic to $\RR^2$ (see~\cite[Th\'eor\`eme 3.1]{Bar:CFA} or~\cite[Proposition 2.1]{Fen:AFM}). The foliations 
$\hfs$ and $\hfu$ descend to foliations by lines of $\orb$ that we still denote by $\hfs$ and $\hfu$.
When one leaf space is Hausdorff, then the other is also Hausdorff (see~\cite[Th\'eor\`eme 4.1]{Bar:CFA} or~\cite[Theorem 3.4]{Fen:AFM}), and, in this case, we say that the flow is $\RR$-\emph{covered}.

The following notion of lozenge proved to be essential in the study of Anosov flow in $3$-manifolds.

\begin{defi}
Let $\alpha,\beta$ be two orbits in $\orb$ and let $A \subset \hfs(\alpha)$, $B \subset \hfu(\alpha)$, $C \subset \hfs(\beta)$ and $D \subset \hfu(\beta)$  be four half leaves satisfying:
\begin{itemize}
 \item For any $\lambda^s \in \leafs$, $\lambda^s \cap B \neq \emptyset$ if and only if $\lambda^s \cap D\neq \emptyset$,
 \item For any $\lambda^u \in \leafu$, $\lambda^u \cap A \neq \emptyset$ if and only if $\lambda^u \cap C \neq \emptyset$,
 \item The half-leaf $A$ does not intersect $D$ and $B$ does not intersect $C$.
\end{itemize}

A \emph{lozenge $L$ with corners $\alpha$ and $\beta$} is the open subset of $\orb$ given by (see Figure~\ref{fig:a_lozenge})
\begin{equation*}
 L := \lbrace p \in \orb \mid \hfs(p) \cap B \neq \emptyset, \; \hfu(p) \cap A \neq \emptyset \rbrace.
\end{equation*}
The half-leaves $A,B,C$ and $D$ are called the \emph{sides}.
\end{defi}

\begin{figure}[h]
\begin{center}
\scalebox{0.85}{   
\begin{pspicture}(0,-1.97)(3.92,1.97)
\psbezier[linewidth=0.04](0.02,0.17)(0.88,0.03)(0.74114233,-0.47874346)(1.48,-1.11)(2.2188578,-1.7412565)(2.38,-1.73)(3.26,-1.95)
\psbezier[linewidth=0.04](0.0,0.27)(0.96,0.3105634)(0.75286174,0.37057108)(1.78,0.77)(2.8071382,1.169429)(2.66,1.47)(3.36,1.45)
\psbezier[linewidth=0.04](0.6,1.95)(1.2539726,1.8871263)(1.1265805,1.3646309)(2.0345206,0.7973154)(2.9424605,0.23)(3.2249315,0.2543258)(3.9,0.31)
\psbezier[linewidth=0.04](0.52,-1.33)(1.48,-1.33)(1.3597014,-0.9703507)(2.3,-0.63)(3.2402985,-0.28964934)(3.14,0.05)(3.84,0.23)
\psdots[dotsize=0.16](1.98,0.85)
\psdots[dotsize=0.16](1.46,-1.11)
\usefont{T1}{ptm}{m}{n}
\rput(1.6145313,-0.06){$L$}
\usefont{T1}{ptm}{m}{n}
\rput(1.4,-1.38){$\wt\alpha$}
\rput(2,1.2){$\wt\beta$}

\rput(0.6,-0.6){$A$}
\rput(3,-0.6){$B$}
\rput(0.6,0.6){$D$}
\rput(3,0.6){$C$}
\end{pspicture}
}
\end{center}
\caption{A lozenge with corners $\wt\alpha$, $\wt\beta$ and sides $A,B,C,D$} \label{fig:a_lozenge}
\end{figure}
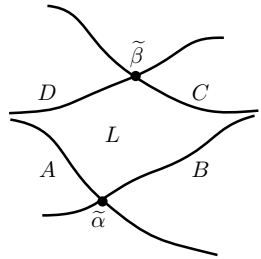

\begin{lemm} \label{lem:periodic_lozenges}
 Let $L$ be a lozenge with corners $\alpha$ and $\beta$. Suppose that $\alpha$ corresponds to a lift of a periodic orbit $\gamma_{\alpha}$.
Then $\beta$ corresponds to a lift of a periodic orbit $\gamma_{\beta}$, and $\gamma_{\alpha}$ (or, possibly, the orbit obtained by going twice around $\gamma_{\alpha}$) is freely homotopic to the inverse of $\gamma_{\beta}$ (or, possibly, the orbit obtained by going twice around $\gamma_{\beta}$).
\end{lemm}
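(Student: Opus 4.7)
The plan is to extract $\beta$ as a fixed point of the deck transformation associated to $\gamma_\alpha$ and then use a holonomy argument to detect the orientation reversal. Let me write $g \in \pi_1(M)$ for the covering transformation of $\wt M$ corresponding to one positive traversal of $\gamma_\alpha$. Choosing a lift $\tilde p_\alpha$ projecting to $\alpha$, one has $g \cdot \tilde p_\alpha = \hflot^{T}(\tilde p_\alpha)$ with $T > 0$ equal to the period of $\gamma_\alpha$. In particular $g$ fixes $\alpha$ in $\orb$ and preserves both $\hfs(\alpha)$ and $\hfu(\alpha)$; the action is a contraction toward $\alpha$ on the former and an expansion away from $\alpha$ on the latter. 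Replacing $g$ by $g^2$ if needed (which accounts for the ``twice around'' alternative in the lemma), I may assume $g$ preserves each of the half-leaves $A$ and $B$ individually.

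The next step is to observe that $L = \{p \in \orb : \hfs(p) \cap B \neq \emptyset,\, \hfu(p) \cap A \neq \emptyset\}$ depends only on $A$ and $B$, so $g$ preserves $L$. Since $g$ is a homeomorphism of $\orb$ respecting the foliations $\hfs$ and $\hfu$, it permutes the two points of $\partial L$ at which a stable leaf meets an unstable leaf, namely the corners $\alpha$ and $\beta$. As $g(\alpha) = \alpha$, necessarily $g(\beta) = \beta$, so that for any lift $\tilde p_\beta$ of $\beta$ the orbit through $\tilde p_\beta$ is preserved, making $\gamma_\beta$ periodic; one has $g \cdot \tilde p_\beta = \hflot^{T'}(\tilde p_\beta)$ for some $T' \in \RR \setminus \{0\}$. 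The lemma then amounts to showing that $T' < 0$.

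To detect the sign of $T'$, the approach is to use the stable holonomy $h_{BD} \colon B \to D$, $b \mapsto \hfs(b) \cap D$, which is well-defined by the matching condition on stable leaves and is $g$-equivariant. I claim $h_{BD}$ reverses the natural orientations (given by the corners $\alpha$ and $\beta$): as $b \to \alpha$ along $B$, the leaf $\hfs(b)$ converges to $\hfs(\alpha)$, which by the lozenge geometry is disjoint from $\hfu(\beta)$ in $\orb$. Consequently $h_{BD}(b) \in \hfu(\beta)$ must escape every compact subset, so $h_{BD}(b) \to \infty$ along $D$ rather than approaching $\beta$. Since $g$ expands $B$ away from $\alpha$, equivariance combined with this reversal forces $g$ to contract $D$ toward $\beta$, \ie $g$ contracts $\hfu(\beta)$ at $\beta$. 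This is precisely the local behavior of the holonomy along a \emph{negative} traversal of $\gamma_\beta$, yielding $T' < 0$ and hence the desired free homotopy between $\gamma_\alpha$ and $-\gamma_\beta$ (up to doubling).

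I expect the main obstacle to be the orientation argument in the third paragraph: locating $\beta$ as a second fixed point of $g$ is largely formal once the half-leaves are preserved, but extracting the sign of $T'$ requires precise control on the stable holonomy between opposite sides of the lozenge, which in turn rests on the geometric fact that $\hfs(\alpha)$ and $\hfu(\beta)$ do not intersect in $\orb$ --- a statement slightly stronger than the formal condition $A \cap D = \emptyset$ in the definition, but standard for lozenges embedded in the plane $\orb \cong \RR^2$.
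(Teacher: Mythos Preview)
Your approach matches the paper's: pass to $g^2$ if needed so that $g$ fixes the half-leaves $A,B$, deduce that $g$ fixes the opposite sides and hence $\beta$, and then argue that the action of $g$ on $\hfu(\beta)$ has the opposite expansion/contraction character from its action on $\hfu(\alpha)$. The paper phrases this last step as a proof by contradiction (a fixed stable leaf would yield a second periodic orbit on the unstable leaf of $\gamma_\alpha$) rather than via your direct holonomy argument, but the two are essentially the same observation.

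Two points deserve correction. First, your expansion/contraction assignments are reversed: with the convention $g\cdot\tilde p_\alpha=\hflot^{T}(\tilde p_\alpha)$, $T>0$, the action of $g$ on the orbit space \emph{contracts} $\hfu(\alpha)$ and \emph{expands} $\hfs(\alpha)$ (identify a neighborhood of $\alpha$ in $\orb$ with a transversal $\wt\Sigma$; then $g$ acts as $x\mapsto \hflot^{-T}(gx)$, and backward flow expands strong stable and contracts strong unstable). You make the same sign error at $\beta$, so the final conclusion $T'<0$ survives, but the argument as written is incorrect at both ends. Second, you stop at $T'<0$, which only gives $g=h^{-k}$ for some $k\ge 1$, where $h$ is the primitive element for $\gamma_\beta$; the lemma as stated requires $k\le 2$. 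The paper obtains this by running the same argument from $\beta$'s side to see that $h^2$ fixes $\alpha$, hence lies in $\langle g_0\rangle$, and then comparing the two relations forces $h=g^{-1}$ or $h^2=g^{-1}$. Without this symmetry step your proof establishes only that some positive multiple of $\gamma_\alpha$ is freely homotopic to some negative multiple of $\gamma_\beta$.
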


\begin{rema}
 The proof of this result can be found in~\cite[Section 3]{Fen:QGAF} or in~\cite[Section 3]{Bar:MPOT}, but it is not stated exactly as we want, so we provide the argument.
\end{rema}

\begin{proof}
 Since $\alpha$ is the image in $\orb$ of a lift of a periodic orbit $\gamma_{\alpha}$, there exists an element $g\in \pi_1(M)$ which represents this orbit $\gamma_{\alpha}$ (oriented according to the flow direction) and which leaves $\alpha$ invariant.
Note that, since $g$ represents $\gamma_{\alpha}$, the action of $g$ on the unstable leaf $\hfu(\alpha)$ in the orbit space $\orb$ is a contraction, and its action on $\hfs(\alpha)$ is an expansion.

Since $g$ fixes $\hfu(\alpha)$, either $g$ fixes the side of $L$ on $\hfu(\alpha)$, or $g^2$ does. And similarly for the side of $L$ in  $\hfs(\alpha)$. So, up to considering the orbit that goes twice around $\gamma_{\alpha}$, and taking $g^2$ instead of $g$, we can assume that $g$ represents $\gamma_{\alpha}$ and that it fixes the sides of $L$ on the stable and unstable leaves of $\alpha$ in $\orb$.

By the definition of lozenge, since $g$ fixes the sides of $L$ that contain $\alpha$, it also should fix the opposite sides of $L$ (see, for instance, \cite[Lemme 3.4]{Bar:MPOT}). Hence, the opposite corner $\beta$ is the image in $\orb$ of a lift of a periodic orbit $\gamma_{\beta}$ of $\flot$.

Because the action of $g$ on $\hfu(\alpha)$ in $\orb$ is a contraction, its action on $\hfu(\beta)$ in $\orb$ has to be an expansion. Otherwise, there would need to be at least one stable leaf intersecting both $\hfu(\alpha)$ and $\hfu(\beta)$ fixed by $g$. That would imply the existence of two periodic orbits on, for instance, the unstable leaf of $\gamma_{\alpha}$, which is impossible.

Hence, if $h$ is the element of $\pi_1(M)$ that represents $\gamma_{\beta}$ and fixes $\beta$, there exists $n \geq 1$ such that $h^{n}= g^{-1}$. But, by arguing in the same way as before we can shows that $h^2$ has to fix $\alpha$. Hence $h= g^{-1}$ or $h^{2}= g^{-1}$.

If $h= g^{-1}$, then $\gamma_{\beta}$ is freely homotopic to the inverse of $\gamma_{\alpha}$. And if $h^{2}= g^{-1}$, then the orbit of the flow obtained by going twice around $\gamma_{\beta}$ is freely homotopic to the inverse of $\gamma_{\alpha}$.
\end{proof}

\begin{proof} [Proof of Theorem~\ref{t.free}]
 If $\flot$ is $\RR$-covered, then  $\flot$ is transitive~\cite[Th\'eor\`emes 2.7]{Bar:CFA}. Further, if every leave of $\widetilde{\mathcal F}^{uu}$ intersects every leave of $\hfs$, then $\flot$ is a suspension of an Anosov diffeomorphism~\cite[Th\'eor\`emes 2.7]{Bar:CFA}.

If $\flot$ is $\RR$-covered and not a suspension of an Anosov diffeomorphism,  then $\flot$ is \emph{skewed}~\cite[Theorem 3.4]{Fen:AFM}, and hence every lift of every periodic orbit is a corner of a lozenge (see, for instance, the proof of \cite[Theorem 3.4]{Fen:AFM}). Then Lemma~\ref{lem:periodic_lozenges} concludes the proof.

If $\flot$ is not $\mathbb{R}$-covered, then there exist lozenges whose corners correspond to periodic orbits~\cite[Theorem B and Corollary 4.4]{Fen:SBAF}. In this case, again, the proof is complete by Lemma \ref{lem:periodic_lozenges}.
\end{proof}

\subsection{Proof of Corollary \ref{cor_algebraic_flows}}

Let us recall Tomter classification result (see \cite[Theorem 4]{BM13}): Let $\Phi^t$ be an algebraic Anosov flow. Then, up to passing to a finite cover, $\Phi^t$ is either a suspension of an Anosov automorphism of a nilmanifold, or it is a \emph{nil-suspension} over the geodesic flow of a locally symmetric space of real rank one.
A nil-suspension means that $\Phi^t\colon E \to E$ is an affine fiberwise Anosov flow over a geodesic flow $\varphi^t\colon M \to M$, as in Definition \ref{def_anosov}, except that $E$ is now a fiber bundle over $M$ whose fiber is any \emph{nilmanifold} $N$, instead of just a torus.

So Corollary \ref{cor_algebraic_flows} is a direct consequence of Theorem \ref{thm_fiberwise_anosov_over_n-dimensional_anosov} when $\Phi^t$ is a $\TT^d$-suspension over a geodesic flow (i.e., a fiberwise Anosov flow in the setting of Definition \ref{def_anosov}). To deal with the more general nil-suspensions, one just have to recall that every nil-suspensions can be obtained by successive $\TT^d$-suspensions (see, \cite[Section 3.3.1]{BM13}). Hence, applying Theorem \ref{thm_fiberwise_anosov_over_n-dimensional_anosov} iteratively to each of the successive suspensions yields Corollary \ref{cor_algebraic_flows}.

\section{Proof of Theorem \ref{thm2}} \label{sec_3}

\subsection{Hyperbolic manifolds which admit totally geodesic hypersurfaces} \label{subsec_total_geod_hypersurface}

Denote by $M$ an $n$-dimensional closed oriented hyperbolic manifold \ie closed manifold modeled on the hyperbolic $n$-space $\mathbb H^n$, $n\ge 2$. Let $N$ be a closed totally geodesic codimension one submanifold in $M$. Further we will assume that the normal bundle of $N$ is trivial, \ie that $N\subset M$ is two-sided. We will call such $N\subset M$ a {\it totally geodesic hypersurface} in $M$. Clearly $N$ is $(n-1)$-dimensional hyperbolic manifold. Denote by $SM$ and $SN$ the unit tangent bundles of $M$ and $N$, respectively. Then the induced embedding $SN\subset SM$ is codimension two and, clearly, the geodesic flow on $SM$ leaves $SN$ invariant.

When $n=2$ the hypersurface $N$ is just a simple closed geodesic. For $n\ge 3$ hyperbolic manifolds which admit two-sided totally geodesic hypersurfaces can also be constructed.
When  $n=3$ hyperbolic manifolds which admit such totally geodesic subsurfaces (or, equivalently, cocompact torsion-free Kleinian groups which admit cocompact Fuchsian subgroups) can be constructed in multiple ways. One elementary method to construct such a pair $N\subset M$ is by gluing $M$ from a compact hyperbolic tetrahedra whose dihedral angles are submultiples of $\pi$~\cite{BM}. (F.~Lann\'er showed that there are precisely 9 such tetrahedra~\cite{L}.) The Kleinian group generated by reflection in the faces of such tetrahedron is a Coxeter group whose index two subgroup consisting of orientation preserving isometries is the fundamental group of sought manifold $M$. Clearly each face develops into a closed totally geodesic surface. In general, this surface is only immersed, but it becomes embedded (and two-sided) after passing to a finite cover.

For constructions in all dimensions $\ge 3$ see~\cite{GPS} and~\cite[Section 5]{CD}.

\subsection{Special coordinates adapted to the geodesic flow around the totally geodesic hypersurface}

In this section we construct a special parametrization of a neighborhood of $SN\subset SM$ which will be well suited to performing the pitchfork bifurcation on $SN$ in Section \ref{sec_DA_flow}.

\subsubsection{A 2-dimensional submanifold of $S\bH^2$} \label{subsec_special_subset}
We start by describing some special submanifold in $S\bH^2$, the unit tangent bundle of the hyperbolic disc.

Fix $(x,v)$ in $S\bH^2$. Let $c_v(t)$ be the geodesic through $x$ in the direction $v$. Denote by $v^+$ and $v^-$ the endpoints of $c_v(t)$ in $\partial_{\infty}\bH^2$. Choose $u \in S_x\bH^2$ orthogonal to $v$, and denote by $u_t \in S_{c_v(t)}\bH^2$ the vector obtained by parallel transport of $u$ along $c_v$.
We consider the following curves in $\bH^2$ which pass through $x$.
\begin{enumerate}
 \item $S(s)$ is the horocycle through $x$ and $v^+$, and $U(s)$ is the horocycle through $x$ and $v^-$; we parametrize horocycles by arc-length.
 \item For any $t\in \R$, let $\alpha_t(s)$ be the circle centered at $c_v(t)$ and passing through $x$.
 \item For any $t\in \R$, let $k_t(s)$ be the geodesic through $c_v(t)$ in the direction $u_t$ and let $\gamma_t(s)$ the equidistant curve to $k_t(s)$ passing through $x$. We choose the parametrization of $\gamma_t$ so that, for any $s$, $d(\gamma_t(s), k_t(s)) = t$.
\end{enumerate}

\begin{definition}\label{def_special_subset}
For any $(x,v)\in \bH^2$, we define the set $ U_{(x,v)} \subset S\bH^2$ by
\begin{equation*}
 (y,w) \in U_{(x,v)} \quad \text{if and only if } \left\{\begin{aligned}
                                                   y &= x \text{ and } w \in S_x\bH^2\\
                                                   y &= S(s) \text{ and } w = \dot{S}(s) - \pi/2 \\
                                                   y &= U(s) \text{ and } w = \dot{U}(s) - \pi/2 \\
                                                   y &= \alpha_t(s) \text{ and } w = \dot{\alpha_t}(s) - \pi/2 \\
                                                   y &= \gamma_t(s) \text{ and } w = \dot{\gamma_t}(s) - \pi/2 
                                                  \end{aligned} \right.
\end{equation*}
where $w= w' - \pi/2$ means that $w$ was obtained by rotation of $\pi/2$ from $w'$ and the orientation is chosen so that $v = u - \pi/2$ (see Figure \ref{fig_special_neighborhood}).
\end{definition}

  \begin{figure}[h]
   \begin{center}
   \scalebox{0.7}{ 
   \begin{pspicture}(-5,-5)(5,5)
 \pscircle[linewidth=1pt](0,0){5}
 \psline[linewidth=1pt](-5,0)(5,0)
 \psline[linewidth=1pt,linecolor=red](0,-5)(0,5)
 \pscircle[linewidth=1pt,linecolor=blue](2.5,0){2.5}
 \pscircle[linewidth=1pt,linecolor=blue](-2.5,0){2.5}
 \pscircle[linewidth=1pt,linecolor=green](2,0){2}
 \pscircle[linewidth=1pt,linecolor=green](-2,0){2}
 \pscircle[linewidth=1pt,linecolor=green](-1,0){1}
 \pscircle[linewidth=1pt,linecolor=green](1,0){1}
 \psarc[linewidth=1pt,linecolor=red](5,0){5}{120}{240}
 \rput[l]{180}{\psarc[linewidth=1pt,linecolor=red](5,0){5}{120}{240}}
 \psline[linewidth=1pt]{->}(0,0)(0.25,0)
 \psrotate(0,0){-55}{\psline[linewidth=1pt]{->}(0,0)(0.25,0)}
 \psrotate(0,0){55}{\psline[linewidth=1pt]{->}(0,0)(0.25,0)}
 \uput{1.5pt}[50](0.25,0){$v$}
 \psrotate(2,0){-45}{\psline[linewidth=1pt,linecolor=green]{->}(0,0)(0.25,0)}
 \psrotate(2,0){45}{\psline[linewidth=1pt,linecolor=green]{->}(0,0)(0.25,0)}
 \psrotate(2,0){90}{\psline[linewidth=1pt,linecolor=green]{->}(0,0)(0.25,0)}
 \psrotate(2,0){-90}{\psline[linewidth=1pt,linecolor=green]{->}(0,0)(0.25,0)}
 \psrotate(2,0){135}{\psline[linewidth=1pt,linecolor=green]{->}(0,0)(0.25,0)}
 \psrotate(2,0){-135}{\psline[linewidth=1pt,linecolor=green]{->}(0,0)(0.25,0)}
 \psrotate(-2,0){-45}{\psline[linewidth=1pt,linecolor=green]{->}(0,0)(0.25,0)}
 \psrotate(-2,0){45}{\psline[linewidth=1pt,linecolor=green]{->}(0,0)(0.25,0)}
 \psrotate(-2,0){90}{\psline[linewidth=1pt,linecolor=green]{->}(0,0)(0.25,0)}
 \psrotate(-2,0){-90}{\psline[linewidth=1pt,linecolor=green]{->}(0,0)(0.25,0)}
 \psrotate(-2,0){135}{\psline[linewidth=1pt,linecolor=green]{->}(0,0)(0.25,0)}
 \psrotate(-2,0){-135}{\psline[linewidth=1pt,linecolor=green]{->}(0,0)(0.25,0)}
 \psrotate(1,0){-45}{\psline[linewidth=1pt,linecolor=green]{->}(0,0)(0.25,0)}
 \psrotate(1,0){45}{\psline[linewidth=1pt,linecolor=green]{->}(0,0)(0.25,0)}
 \psrotate(1,0){85}{\psline[linewidth=1pt,linecolor=green]{->}(0,0)(0.25,0)}
 \psrotate(1,0){-85}{\psline[linewidth=1pt,linecolor=green]{->}(0,0)(0.25,0)} 
 \psrotate(1,0){125}{\psline[linewidth=1pt,linecolor=green]{->}(0,0)(0.25,0)}
 \psrotate(1,0){-125}{\psline[linewidth=1pt,linecolor=green]{->}(0,0)(0.25,0)} 
 \psrotate(-1,0){-45}{\psline[linewidth=1pt,linecolor=green]{->}(0,0)(0.25,0)}
 \psrotate(-1,0){45}{\psline[linewidth=1pt,linecolor=green]{->}(0,0)(0.25,0)}
 \psrotate(-1,0){85}{\psline[linewidth=1pt,linecolor=green]{->}(0,0)(0.25,0)}
 \psrotate(-1,0){-85}{\psline[linewidth=1pt,linecolor=green]{->}(0,0)(0.25,0)} 
 \psrotate(-1,0){125}{\psline[linewidth=1pt,linecolor=green]{->}(0,0)(0.25,0)}
 \psrotate(-1,0){-125}{\psline[linewidth=1pt,linecolor=green]{->}(0,0)(0.25,0)} 
 \psrotate(2.5,0){-45}{\psline[linewidth=1pt,linecolor=blue]{->}(0,0)(0.25,0)}
 \psrotate(2.5,0){45}{\psline[linewidth=1pt,linecolor=blue]{->}(0,0)(0.25,0)}
 \psrotate(2.5,0){75}{\psline[linewidth=1pt,linecolor=blue]{->}(0,0)(0.25,0)}
 \psrotate(2.5,0){-75}{\psline[linewidth=1pt,linecolor=blue]{->}(0,0)(0.25,0)}
  \psrotate(2.5,0){105}{\psline[linewidth=1pt,linecolor=blue]{->}(0,0)(0.25,0)}
 \psrotate(2.5,0){-105}{\psline[linewidth=1pt,linecolor=blue]{->}(0,0)(0.25,0)}
 \psrotate(-2.5,0){-45}{\psline[linewidth=1pt,linecolor=blue]{->}(0,0)(0.25,0)}
 \psrotate(-2.5,0){45}{\psline[linewidth=1pt,linecolor=blue]{->}(0,0)(0.25,0)}
 \psrotate(-2.5,0){75}{\psline[linewidth=1pt,linecolor=blue]{->}(0,0)(0.25,0)}
 \psrotate(-2.5,0){-75}{\psline[linewidth=1pt,linecolor=blue]{->}(0,0)(0.25,0)}
 \psrotate(-2.5,0){105}{\psline[linewidth=1pt,linecolor=blue]{->}(0,0)(0.25,0)}
 \psrotate(-2.5,0){-105}{\psline[linewidth=1pt,linecolor=blue]{->}(0,0)(0.25,0)}
 \psrotate(-5,0){-35}{\psline[linewidth=1pt,linecolor=red]{->}(0,0)(0.25,0)}
 \psrotate(-5,0){35}{\psline[linewidth=1pt,linecolor=red]{->}(0,0)(0.25,0)}
 \psrotate(-5,0){50}{\psline[linewidth=1pt,linecolor=red]{->}(0,0)(0.25,0)}
 \psrotate(-5,0){-50}{\psline[linewidth=1pt,linecolor=red]{->}(0,0)(0.25,0)}
 \psrotate(-5,0){15}{\psline[linewidth=1pt,linecolor=red]{->}(0,0)(0.25,0)}
 \psrotate(-5,0){-15}{\psline[linewidth=1pt,linecolor=red]{->}(0,0)(0.25,0)}
 \psrotate(5,0){-35}{\psline[linewidth=1pt,linecolor=red]{->}(0,0)(0.25,0)}
 \psrotate(5,0){35}{\psline[linewidth=1pt,linecolor=red]{->}(0,0)(0.25,0)}
 \psrotate(5,0){50}{\psline[linewidth=1pt,linecolor=red]{->}(0,0)(0.25,0)}
 \psrotate(5,0){-50}{\psline[linewidth=1pt,linecolor=red]{->}(0,0)(0.25,0)}
 \psrotate(5,0){15}{\psline[linewidth=1pt,linecolor=red]{->}(0,0)(0.25,0)}
 \psrotate(5,0){-15}{\psline[linewidth=1pt,linecolor=red]{->}(0,0)(0.25,0)}
  \psline[linewidth=1pt,linecolor=red]{->}(0,2)(0.25,2)
  \psline[linewidth=1pt,linecolor=red]{->}(0,3)(0.25,3)
  \psline[linewidth=1pt,linecolor=red]{->}(0,4)(0.25,4)
  \psline[linewidth=1pt,linecolor=red]{->}(0,-2)(0.25,-2)
  \psline[linewidth=1pt,linecolor=red]{->}(0,-3)(0.25,-3)
  \psline[linewidth=1pt,linecolor=red]{->}(0,-4)(0.25,-4)
\end{pspicture}}
   \end{center}
   \caption{The set $U_{(x,v)}$ with the five different types of elements} \label{fig_special_neighborhood}
  \end{figure}
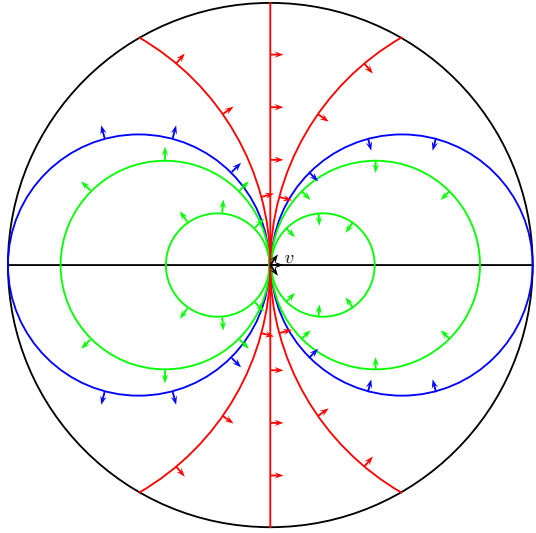

An essential property of the set $U_{(x,v)}$ is that it is invariant under the geodesic flow in the following sense.

\begin{proposition}\label{properties_of_special_set}
For all $(x,v)\in S\bH^2$, and $\eps>0$, we define 
  \[
   U_{(x,v),\eps} = \left\{ (y,w) \in U_{(x,v)} \mid d_{\mathrm{Sas}}((y,w),\dot{c}_v) \le \eps \right\},
  \]
where $d_{\mathrm{Sas}}$ denotes the Sasaki distance induced by the Sasaki Riemannian metric on $S\bH^2$.
  Then the sets $U_{(x,v)}$ and $U_{(x,v),\eps}$, $(x,v)\in S\bH^2$ satisfy the following:
 \begin{enumerate}
  \item \label{flow_invariance} For all $(x,v)\in S\bH^2$ and $t \in \R$ we have
  \[
   g^t\left( U_{(x,v)}\right) = U_{g^t(x,v)},
  \]
  where $g^t$ is the geodesic flow.
  \item \label{flip_invariance} For all $(x,v)\in S\bH^2$ and $\eps>0$, 
  \[
   \sigma \left(U_{(x,v),\eps}\right) = U_{(x,-v),\eps},
  \]
where $\sigma \colon S\bH^2 \rightarrow S\bH^2$ is the flip map, i.e., $\sigma (y,w) = (y, -w)$.
 \item \label{disc} For all $(x,v)\in S\bH^2$, and $\eps>0$ small enough ($\eps < \pi$), the set $U_{(x,v),\eps}$ is a $2$-dimensional smoothly embedded disc which contains $(x,v)$.
 \end{enumerate}
\end{proposition}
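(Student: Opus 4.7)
The plan is to verify the three assertions separately by a direct analysis of how the geodesic flow $g^t$, the flip $\sigma$, and a smooth chart interact with the five defining families of $U_{(x,v)}$. For Part 1, I would check family-by-family that $g^t$ sends each piece of $U_{(x,v)}$ to the corresponding piece of $U_{g^t(x,v)}$; since $c_v = c_{v'}$ as a point set (with $v' = g^t(x,v)$) and the ideal points $v^\pm$ are unchanged, each check is a short piece of hyperbolic geometry. For instance, I expect the sphere $S_x\bH^2$ to flow to the outward radial vectors on the geodesic sphere of radius $|t|$ around $x$, which is exactly the circle $\alpha'_{-t}$ in $U_{(x',v')}$ (centered at $c_{v'}(-t)=x$ and passing through $x'=c_v(t)$); horocycles tangent to $v^\pm$ will be permuted among themselves, so $S\mapsto S'$ and $U\mapsto U'$; and perpendicular vectors to $\alpha_\tau$ at $y$, which lie on the radial geodesic from $c_v(\tau)$ through $y$, will flow to perpendicular vectors of the circle $\alpha'_{\tau-t}$ after a direct distance computation. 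The equidistant curves $\gamma_\tau$ will be handled analogously, with all orientation conventions matching via the normalization $v = u-\pi/2$.

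For Part 2, I would use that $\sigma$ is a Sasaki isometry and $\sigma(\dot c_v)=\dot c_{-v}$, so the $\eps$-neighborhood condition is automatically preserved. The set-theoretic equality $\sigma(U_{(x,v)})=U_{(x,-v)}$ would follow from the identifications $(-v)^+=v^-$, $(-v)^-=v^+$, $c_{-v}(\tau)=c_v(-\tau)$, which yield $S^{-v}=U^v$, $U^{-v}=S^v$, $\alpha^{-v}_\tau=\alpha^v_{-\tau}$, $\gamma^{-v}_\tau=\gamma^v_{-\tau}$ as curves in $\bH^2$; the sign reversal of the perpendicular under $\sigma$ is then exactly compensated by the replacement of $u$ by $-u$ in the orientation convention for $U_{(x,-v)}$.

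For Part 3, the plan is to construct a smooth chart on $U_{(x,v)}$ near $(x,v)$ by organizing the five families via the geodesic curvature $\kappa$ of the base curve: equidistant curves $\gamma_\tau$ contribute $\kappa=\tanh|\tau|\in[0,1)$, horocycles contribute $\kappa=1$, circles $\alpha_\tau$ contribute $\kappa=\coth|\tau|\in(1,\infty)$, and the sphere $S_x\bH^2$ appears as the $\kappa=\infty$ degeneration. All five types of curves through $(x,v)$ will be tangent to the common $2$-plane $\mathrm{span}(u,\partial_\theta)\subset T_{(x,v)} S\bH^2$, with horizontal component $u$ and vertical component determined by $\kappa$, so the tangent space is 2-dimensional. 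I would then build a smooth parametrization $\Psi\colon D^2\to U_{(x,v)}$ of rank $2$ at the origin using, for instance, $(s,\kappa^{-1})$ as coordinates to regularize the sphere limit; the inverse function theorem would then yield an embedded smooth 2-disc. The constraint $\eps<\pi$ is needed to prevent the sphere $S_x\bH^2$ (whose Sasaki diameter is $\pi$) from wrapping back onto itself inside the $\eps$-neighborhood of $\dot c_v$.

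The main obstacle will be in Part 3: establishing $C^\infty$ gluing of the five families into a single 2-manifold at $(x,v)$. One will need to analyze the degenerations $\tau\to 0$ of both $\alpha_\tau$ (shrinking circles) and $\gamma_\tau$ (collapsing to the perpendicular geodesic $k_0$) and verify that the regularized parameter $\kappa^{-1}$, or some similar choice, yields a parametrization that remains smooth at the junction with the vertical sphere $S_x\bH^2$.
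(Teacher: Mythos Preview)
Your treatment of Parts~1 and~2 is essentially identical to the paper's: both argue by a family-by-family check that $g^t$ permutes the five types (fiber $\leftrightarrow$ circles, horocycles $\leftrightarrow$ horocycles, circles $\leftrightarrow$ circles, equidistants $\leftrightarrow$ equidistants), and both deduce Part~2 from $\sigma(U_{(x,v)})=U_{(x,-v)}$ together with the fact that $\sigma$ is a Sasaki isometry preserving $\dot c_{-v}$.

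Your Part~3, however, takes a genuinely different route from the paper. You propose to organize the five families by the (signed) geodesic curvature $\kappa$ of the base curve in $\bH^2$, observe that all lifted curves through $(x,v)$ are tangent to the common $2$-plane $\mathrm{span}(u^{\mathrm{hor}},\partial_\theta)\subset T_{(x,v)}S\bH^2$, and build an explicit chart by hand, using a regularized parameter such as $\kappa^{-1}$ near the fiber ($\kappa=\pm\infty$). The paper instead invokes Sasaki's classification of geodesics on $S\bH^2$~\cite{Sasaki76}: it observes that all five families are \emph{projections to $\bH^2$ of Sasaki geodesics} emanating from $(x,v)$ in the plane $P_{(x,v)}=X(x,v)^\perp$, and that $U_{(x,v)}$ is obtained from the smooth disc $\exp_{\mathrm{Sas},(x,v)}(P_{(x,v)})$ by a smooth fiberwise rotation of the vector part. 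This gives the embedded-disc property in one stroke, without the $C^\infty$ gluing analysis you correctly identify as the main obstacle. Your approach is more elementary and self-contained (no reference to~\cite{Sasaki76}), but you will have to work harder: the parameter $\kappa^{-1}$ regularizes the circle-to-fiber degeneration but blows up at the geodesic $\gamma_0$ ($\kappa=0$), so you will need either a second chart or a globally smooth reparametrization such as $\kappa\mapsto\kappa/\sqrt{1+\kappa^2}$, and you must verify smoothness across the horocycle interfaces $\kappa=\pm 1$ where the $t$-parametrizations of the $\alpha$ and $\gamma$ families both go to $\pm\infty$. None of this is a genuine gap, but the paper's use of the Sasaki exponential map is the cleaner path.
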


\begin{remark}
 In our definition of $U_{(x,v),\eps}$, we choose to take the intersection of $U_{(x,v)}$ with the $\eps$-tubular neighborhood (with respect to the Sasaki metric) of the geodesic $\dot c_v \subset S\bH^2$. We would like to emphasize that it is not the same thing as taking all points inside $U_{(x,v)}$ that are at distance $\eps$ \emph{from $(x,v)$}. Indeed, of all the radial curves that we use in our definition to foliate $U_{(x,v)}$, only four of them are geodesics of the Sasaki metric: the two horospheres, the fiber over $x$, and the curve projecting to $\gamma_0(s)=k_0(s)$. For all the other curves, their \emph{projection to $\bH^2$} are also projections of Sasaki geodesics, but their ``vector part" makes them fail to be Sasaki geodesics (see the classification of Sasaki geodesics in \cite{Sasaki76}). Hence, the orthogonal projection of most points in $U_{(x,v)}$ to $\dot c_v$ will not be $(x,v)$.
\end{remark}

\begin{proof}
 Part \ref{flow_invariance} is a straightforward consequence of our definitions and of hyperbolic geometry: one just have to verify that the five types of points $(y,w) \in U_{(x,v)}$ flows to one of the five types in $U_{g^t(x,v)}$. This is trivial if $(y,w)=(x,v)$. If $y \in U(s)$ or $y\in S(s)$, then, according to our definition, $(y,w)$ is in the strong stable or strong unstable leaf through $(x,v)$, which are invariant under the flow.
 If $y\in \alpha_{t'}(s)$, for some $t'$, then the projection of $g^t(y,w)$ onto $\bH^2$ is a circle centered at $c_v(t')= c_{g^t(x,v)}(t-t')$ of radius $|t'-t|$, and the vector direction of $g^t(y,w)$ is still orthogonal to that circle. Hence, $g^t(y,w)\in  U_{g^t(x,v)}$. A similar argument deals with the last case.
 
 Part~\ref{flip_invariance} follows from the fact that, by our definition, $\sigma \left(U_{(x,v)}\right) = U_{(x,-v)}$, and the fact that $\sigma$ is an isometry with respect to the Sasaki metric.
 
%
%
We are left with showing that part~\ref{disc} holds.
For any $(x,v)\in S\bH^2$, let $P_{(x,v)}$ be the plane distribution in $TS\bH^2$ orthogonal to $X(x,v)$ for the Sasaki metric, where $X\colon S\bH^2 \to TS\bH^2$ is the vector field generating the geodesic flow. Then, by the classification of geodesics of the Sasaki metric \cite{Sasaki76}, the five family of curves used for the definition of $U_{(x,v)}$ are all projections of Sasaki geodesics. Moreover, one has that $\pi(U_{(x,v)})= \pi\left(\exp_{\mathrm{Sas},(x,v)}P_{(x,v)}\right)$, where $\pi \colon S\bH^2 \rightarrow \bH^2$ is the projection and, $\exp_{\mathrm{Sas}}$ is the exponential map of the Sasaki metric. In particular, we have that $U_{(x,v)}$ is obtained from $\exp_{\mathrm{Sas},(x,v)}P_{(x,v)}$ by rotating the vector directions to make them orthogonal to the projections of the Sasaki geodesics (and the amount one needs to rotate depends smoothly on the point, see \cite[Theorem 9]{Sasaki76}). Hence, $U_{(x,v)}$ is a smooth subset of $S\bH^2$, and for any $\eps>0$ small enough, $U_{(x,v),\eps}$ is diffeomorphic to $\exp_{\mathrm{Sas},(x,v)}\left(\lbrace z\in P_{(x,v)} \mid \lVert z \rVert <\eps\rbrace\right)$, which is a smoothly embedded disk.
\end{proof}

\subsubsection{A special parametrization of a neighborhood of $SN$}

We begin by fixing a lift $\wt N$ of $N$ inside $\wt M = \bH^n$. Note that since $N$ is a totally geodesic hypersurface in $M$, then, $\wt N$ is a totally geodesic hyperbolic space $\bH^{n-1}$ inside $\bH^n$.
For any $(x,v) \in SN$, we denote by $(\wt x, v)$ a lift in $S\wt N$.

Because $N$ is two-sided we can choose $u$ a vector field along $N$ which is normal to $N$. We denote by $\wt u$ the lift of $u$ to $\wt N$.

Now, for any $(\wt x,v) \in S\wt N$, we let $\bD_{(\wt x,v)} \subset \wt M$ be the totally geodesic hyperbolic $2$-plane which contains $\wt x$ and is tangent to $v$ and $\wt u (x)$. Then, we define $\wt U_{(\wt x,v)} \subset S \bD_{(\wt x,v)}$ as in Definition~\ref{def_special_subset}, and accordingly, for any $\eps$, we define $\wt U_{(\wt x,v),\eps}\subset \wt U_{(\wt x,v)} $ as in Proposition~\ref{properties_of_special_set}.

Because our definition of $\wt U_{(\wt x,v)}$ relies on the hyperbolic metric only and the covering $\pi_M \colon \wt M \rightarrow M$ is a local isometry, the following lemma is immediate.
\begin{lemma}\label{lem_invariance_by_fundamental_group}
 For all $\eps>0$, the disks $\wt U_{(\wt x,v),\eps}$ are $\pi_1(M)$-equivariant, \ie for any $\gamma \in \pi_1(M)$ which leaves $\wt N\subset \wt M$ invariant  we have
 \[
  \gamma \wt U_{(\wt x,v),\eps} = \wt U_{\gamma \cdot (\wt x,v), \eps}.
 \]
Further, for all $(x,v)\in SN$, the set 
\[
 U_{(x,v),\eps} = \pi_M \left( \wt U_{(\wt x,v),\eps} \right),
\]
 is well-defined.
\end{lemma}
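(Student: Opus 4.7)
The plan is to argue that every ingredient used to define $\wt U_{(\wt x,v),\eps}$ is intrinsically hyperbolic-geometric, hence preserved by any isometry of $\bH^n$ that preserves the auxiliary data (the totally geodesic plane $\bD_{(\wt x,v)}$), and then to deduce the well-definedness of $U_{(x,v),\eps}$ from the equivariance.

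First I would unwind the definition of $\wt U_{(\wt x,v)}$ given in Section~\ref{subsec_total_geod_hypersurface}: it is the image, inside $S\bD_{(\wt x,v)}$, of the five families of curves (the geodesic through $\wt x$ tangent to $v$, the two horocycles through $\wt x$ determined by $v^{\pm}$, the circles centered on $c_v$ through $\wt x$, and the equidistant curves to geodesics orthogonal to $c_v$), each equipped with the unit normal obtained by a $\pi/2$ rotation compatible with $v$. Every one of these objects is defined solely in terms of the hyperbolic metric on $\bD_{(\wt x,v)}$ and the data $(\wt x,v,\wt u(\wt x))$. Consequently, for any isometry $\Psi$ of $\bH^n$ carrying $(\wt x,v,\wt u(\wt x))$ to $(\Psi\wt x,\Psi_*v,\Psi_*\wt u(\wt x))$, we have $\Psi(\bD_{(\wt x,v)})=\bD_{\Psi\cdot(\wt x,v)}$ and $\Psi_*(\wt U_{(\wt x,v)})=\wt U_{\Psi\cdot(\wt x,v)}$.

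Next I would verify the key equivariance of $\wt u$ under the stabilizer $\mathrm{Stab}_{\pi_1(M)}(\wt N)$. Since $N$ is two-sided, the normal field $u$ is globally defined on $N$, and $\wt u$ is its lift under the local isometry $\pi_M\colon \wt N\to N$. For any $\gamma\in\pi_1(M)$ with $\gamma\wt N=\wt N$ the identity $\pi_M\circ\gamma=\pi_M$ yields $D\gamma\cdot\wt u(\wt x)=\wt u(\gamma\wt x)$ (up to the sign of $u$, which is unambiguously fixed once chosen because $\gamma$ preserves the orientation of the trivial normal bundle of $\wt N$). Combining this with the previous paragraph applied to $\Psi=\gamma$ gives the equality
\[
\gamma\,\wt U_{(\wt x,v)}=\wt U_{\gamma\cdot(\wt x,v)}.
\]
For the $\eps$-neighborhoods, I would simply recall that $\gamma$ is an isometry for the Sasaki metric on $S\wt M$ (the Sasaki metric being functorial under isometries of the base), so it preserves the condition $d_{\mathrm{Sas}}((y,w),\dot c_v)\le\eps$, giving $\gamma\,\wt U_{(\wt x,v),\eps}=\wt U_{\gamma\cdot(\wt x,v),\eps}$, which is the first claim of the lemma.

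Finally, to prove well-definedness of $U_{(x,v),\eps}$, I would take two lifts $(\wt x_1,v_1)$ and $(\wt x_2,v_2)$ in $S\wt N$ of the same $(x,v)\in SN$; they are related by some deck transformation $\gamma\in\pi_1(M)$, which necessarily satisfies $\gamma\wt N=\wt N$ since both basepoints lie on $\wt N$. The equivariance just established gives $\gamma\,\wt U_{(\wt x_1,v_1),\eps}=\wt U_{(\wt x_2,v_2),\eps}$, so applying $\pi_M$ yields the same subset of $SM$, which is the content of the second claim. The only place where any real care is required is the equivariance of $\wt u$ under the stabilizer of $\wt N$, but this is guaranteed by the two-sidedness hypothesis made in Section~\ref{subsec_total_geod_hypersurface}; everything else is a direct consequence of the invariance of horocycles, geodesics, circles, equidistant curves, and the Sasaki metric under isometries of $\bH^n$.
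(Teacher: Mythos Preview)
Your proof is correct and follows the same approach as the paper, which simply declares the lemma ``immediate'' from the fact that the construction of $\wt U_{(\wt x,v)}$ uses only the hyperbolic metric and that $\pi_M$ is a local isometry. You have filled in the details the paper omits---in particular the equivariance of the lifted normal field $\wt u$ under $\mathrm{Stab}_{\pi_1(M)}(\wt N)$ (which is where the two-sidedness hypothesis enters) and the observation that any deck transformation relating two lifts in $S\wt N$ of the same $(x,v)\in SN$ must preserve $\wt N$ because $N$ is embedded---but the underlying idea is identical.
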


Another easy property of the sets $\wt U_{(\wt x,v),\eps}$ is the following
\begin{lemma} \label{lem_empty_intersection}
 If $(\wt x,v) \neq (\wt x',v')  \in S\wt N$, then, for any $\eps>0$ small enough, 
 \[
\wt U_{(\wt x,v),\eps} \cap \wt U_{(\wt x',v'),\eps} = \varnothing.
 \]
\end{lemma}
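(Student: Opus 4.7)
The plan is to reduce matters to the fact that, for sufficiently small $\eps$, each of the two sets $\wt U_{(\wt x, v), \eps}$ and $\wt U_{(\wt x', v'), \eps}$ is a small neighborhood of its defining point in the ambient Sasaki metric, and then to separate the two defining points by a triangle inequality. First, I would note that although Proposition~\ref{properties_of_special_set}(3) is stated in $S\bH^2$, it applies to $\wt U_{(\wt x, v), \eps}$ directly: this set is contained in $S\bD_{(\wt x, v)}$, and $\bD_{(\wt x, v)}$ is an isometric copy of $\bH^2$ sitting as a totally geodesic submanifold of $\bH^n$, so $S\bD_{(\wt x, v)}$ is a smooth $3$-dimensional submanifold of $S\bH^n$ on which the ambient and intrinsic Sasaki metrics coincide (precisely because $\bD_{(\wt x, v)}$ is totally geodesic). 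Applying the proposition inside $S\bD_{(\wt x, v)}$ produces a smoothly embedded $2$-disc that is also smoothly embedded in $S\bH^n$.

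Second, I would strengthen Proposition~\ref{properties_of_special_set}(3) into a quantitative diameter bound. Its proof realises $\wt U_{(\wt x, v), \eps}$ as the image of the open $\eps$-ball of $P_{(\wt x, v)} \subset T_{(\wt x, v)} S\bD_{(\wt x, v)}$ under $\exp_{\mathrm{Sas}, (\wt x, v)}$ followed by a smooth pointwise rotation of the vector component, and this rotation equals the identity at $(\wt x, v)$ itself. Both ingredients are smooth, and the composed map has non-degenerate differential at the origin, hence is bi-Lipschitz on sufficiently small balls. This yields a constant $C = C(\wt x, v) < \infty$ with $\mathrm{diam}_{\mathrm{Sas}}(\wt U_{(\wt x, v), \eps}) \le C \eps$ for all $\eps$ small enough, and similarly at $(\wt x', v')$.

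Third, I would conclude as follows. Set $\delta := d_{\mathrm{Sas}}((\wt x, v), (\wt x', v')) > 0$ and choose $\eps_0 > 0$ so small that both diameter bounds are strictly less than $\delta/3$ for every $\eps \le \eps_0$. Since each disc contains its centre, the two discs are then contained in the two open Sasaki balls of radius $\delta/3$ around $(\wt x, v)$ and $(\wt x', v')$, which are disjoint by the triangle inequality; hence so are the two discs.

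The main difficulty I anticipate is the diameter estimate in step two. In principle it is routine, since the Sasaki exponential is smooth with identity derivative at the origin and the correction rotation is smooth in the point and vanishes at the centre; however, verifying that the rotation angle coming from Sasaki's classification of geodesics genuinely has these properties requires a careful reading of Sasaki's theorem. Granting this, the remainder of the argument is nothing more than a standard local estimate followed by an elementary Sasaki triangle inequality.
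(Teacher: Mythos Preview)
Your overall strategy---shrink each disc to a set of small Sasaki diameter, then separate by the triangle inequality---is different from the paper's argument, which proceeds by a case analysis on the relative position of the totally geodesic $2$-planes $\bD_{(\wt x,v)}$ and $\bD_{(\wt x',v')}$ (disjoint; distinct but intersecting along a geodesic orthogonal to $\wt N$; equal). In the second case the paper computes that any common point of $S\bD_{(\wt x,v)}$ and $S\bD_{(\wt x',v')}$ is at Sasaki distance $\ge \pi/2$ from $(\wt x,v)$, which even yields a uniform threshold $\eps<\pi/2$; in the third case it appeals directly to the $2$-dimensional picture. Your route is softer and, if it worked as written, would be perfectly adequate for the non-uniform statement of the lemma.

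However, there is a genuine gap in your second step. You describe $\wt U_{(\wt x,v),\eps}$ as ``the image of the open $\eps$-ball of $P_{(\wt x,v)}$ under $\exp_{\mathrm{Sas}}$ followed by a smooth rotation,'' and from this you extract a bi-Lipschitz diameter bound. That is not how $\wt U_{(\wt x,v),\eps}$ is defined: by Definition~\ref{def_special_subset} and Proposition~\ref{properties_of_special_set} it is the intersection of the surface $\wt U_{(\wt x,v)}$ with the $\eps$-tubular neighbourhood of the \emph{entire orbit} $\dot c_v$, not the set of points at distance $\le\eps$ from the single point $(\wt x,v)$. The paper's remark immediately after Proposition~\ref{properties_of_special_set} explicitly warns against conflating these; and its proof of item~(3) only asserts that $\wt U_{(\wt x,v),\eps}$ is \emph{diffeomorphic} to the exponential image of the $\eps$-ball, not equal to it. So your bi-Lipschitz estimate on $\exp_{\mathrm{Sas}}$ does not by itself bound the diameter of $\wt U_{(\wt x,v),\eps}$: you would still need to rule out that the surface $\wt U_{(\wt x,v)}$ returns close to $\dot c_v$ far from $(\wt x,v)$ (for instance along the circles $\alpha_t$, which meet $c_v$ again at $c_v(2t)$). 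This can be done, but it requires an additional geometric argument that you have not supplied; the paper's case analysis sidesteps the issue entirely.
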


\begin{proof}
The claim is immediate if the hyperbolic planes $\bD_{(\wt x,v)}$ and $\bD_{(\wt x',v')}$ are disjoint. So we can assume that they intersect. Then either they are equal or they intersect along a geodesic.

We first assume that $\bD_{(\wt x,v)}$ and $\bD_{(\wt x',v')}$ are distinct, and call $\gamma$ the geodesic given by their intersection. Given the definition of these discs, the intersection needs to be orthogonal to $\wt N$, i.e., along a geodesic $k_t(s)$. Hence, $ S\bD_{(\wt x,v)}\cap S\bD_{(\wt x',v')} = \{(k_t(s), \dot{k}_t(s))\}$, and $d_{\mathrm{Sas}}( (\wt x,v), (k_t(s), \dot{k}_t(s))) \geq \pi/2$ (where we choose the Sasaki metric such that the length of each great circle in the fiber is $2\pi$). Therefore we get the conclusion for all $\eps<\pi/2$.

So we are left to consider the case when $\bD_{(\wt x,v)} = \bD_{(\wt x',v')}$, that is if $(\wt x,v)$ and $(\wt x',v')$ define the same geodesic of $\wt N$ (with orientation potentially reversed). By the definition of $\wt U_{(\wt x,v),\eps}$, it is again clear that $\wt U_{(\wt x,v),\eps} \cap \wt U_{(\wt x',v'),\eps} = \varnothing$.
\end{proof}

\begin{definition}
 For any $\eps$, we define the sets $\cV_{\eps} \subset SM$ by
\[
 \cV_{\eps} = \bigcup_{(x,v) \in SN}  U_{(x,v),\eps}
\]
\end{definition}

We denote by $g^t\colon SM \rightarrow SM$ the geodesic flow.

\begin{proposition}
\label{prop_coordinates}
There exists $\eps_0>0$ such that, for all $\eps<\eps_0$, we have the following:
 \begin{enumerate}
  \item \label{tubular_neighborhood} The sets $\cV_{\eps}$ are neighborhoods of $SN$ and codimension $0$ embedded submanifolds. In particular, we have
 \[
  \cV_{\eps}= \lbrace (y,w)\in SM \mid d_{\mathrm{Sas}}((y,w), SN) \le \eps \rbrace \simeq SN \times \bD^2
 \]
  \item \label{flow_invariance_on_M} For any $t \in \R$, there exists $\eps'>0$, such that 
  \begin{align*}
   g^t\left( \cV_{\eps'}\right) &\subset \cV_{\eps} \\
   g^t\left( U_{(x,v),\eps'}\right) &\subset  U_{g^t(x,v),\eps}
  \end{align*}
  \item \label{flip_invariance_on_M} The sets $\cV_{\eps}$ are invariant under the flip map $\sigma \colon SM \rightarrow SM$, i.e., 
  \[
   \sigma \left(\cV_{\eps}\right) = \cV_{\eps},
  \]
where $\sigma (y,w) = (y, -w)$.
 \item \label{one_dimensional_foliations} For all $(x,v) \in SN$, the sets $U_{(x,v),\eps}$ are transverse to the geodesic flow. Moreover, the weak stable and weak unstable foliations of the geodesic flow $g^t$ of $M$ restrict to two transverse $1$-dimensional foliations of $U_{(x,v),\eps}$
  \item \label{linearization_of_flot} For all $(x,v) \in SN$, there exist coordinates $(s,u)$ on the disc $U_{(x,v),\eps}$ around $(x,v)$ such that:
\begin{enumerate}
 \item For all $z \in \cV_{\eps}$ and $t\in \R$ such that $g^t(z) \in \cV_{\eps}$, if $z = (s,u; x,v)$, then 
\[
 g^t(z) = \left(e^{-t}s , e^{t}u; g^t(x,v) \right).
\]
 \item For any $(x,v) \in SN$, and $u_0$ constant, we have $\left\{ (s,u;x,v) \in \cV_{\eps} \mid u = u_0 \right\} \subset \cF^{u}(x,v)$.
 \item For any $(x,v) \in SN$, and $s_0$ constant, we have $\left\{ (s,u;x,v) \in \cV_{\eps} \mid s = s_0 \right\} \subset \cF^{s}(x,v)$.
 \item For any $(x,v) \in SN$, we have
 \[
\left\{ (0,u;x,v) \in \cV_{\eps} \right\} \times \cF^{uu}_N(x,v)   = \cF^{uu}(x,v) \cap \cV_{\eps}, 
 \]
where $\cF^{uu}_N(x,v)$ is the restriction of the strong unstable leaf through $(x,v)$ to $SN$.
 \item For any $(x,v) \in SN$, we have
 \[
\left\{ (s,0;x,v) \in \cV_{\eps} \right\} \times \cF^{ss}_N(x,v)   = \cF^{ss}(x,v) \cap \cV_{\eps}, 
 \]
where $\cF^{ss}_N(x,v)$ is the restriction of the strong stable leaf through $(x,v)$ to $SN$.
\end{enumerate}
 \end{enumerate}
\end{proposition}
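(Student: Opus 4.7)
I verify parts (\ref{tubular_neighborhood})--(\ref{linearization_of_flot}) in sequence, using Proposition \ref{properties_of_special_set}, Lemma \ref{lem_empty_intersection}, and the total geodesicity of $\wt N \subset \wt M$.

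For parts (\ref{tubular_neighborhood})--(\ref{flip_invariance_on_M}): Lemma \ref{lem_empty_intersection} combined with compactness of $SN$ yields a uniform $\eps_0$ so that, for $\eps < \eps_0$, the discs $U_{(x,v),\eps}$ are pairwise disjoint, smoothly varying $2$-disks tangent to the Sasaki-orthogonal plane $P_{(x,v)}$ at $(x,v)$. Hence $\mathcal{V}_\eps = \bigsqcup U_{(x,v),\eps}$ is a smooth tubular neighborhood of $SN$ diffeomorphic to $SN \times \bD^2$, with identification with the metric $\eps$-neighborhood coming from the Sasaki-exponential description of $U_{(x,v),\eps}$ given in the proof of Proposition \ref{properties_of_special_set}(\ref{disc}), together with a short normal-coordinates argument to rule out points outside the discs. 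Part (\ref{flow_invariance_on_M}) follows by combining the exact disc-level invariance $g^t(U_{(x,v)}) = U_{g^t(x,v)}$ (Proposition \ref{properties_of_special_set}(\ref{flow_invariance})) with a compactness estimate controlling the distortion of the $\eps$-thickening under the time-$t$ map on the compact set $SN$. Part (\ref{flip_invariance_on_M}) is immediate from Proposition \ref{properties_of_special_set}(\ref{flip_invariance}) and the flip-invariance of $SN$.

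For part (\ref{one_dimensional_foliations}): transversality of $U_{(x,v),\eps}$ to $g^t$ at $(x,v)$ follows from $T_{(x,v)}U_{(x,v),\eps} = P_{(x,v)}$ and extends to a neighborhood by continuity for $\eps$ small. Because $\bD_{(\wt x, v)}$ is totally geodesic, the geodesic flow on $S\bD_{(\wt x,v)}$ coincides with the restriction of $g^t$, and its $2$-dimensional transverse weak stable/unstable foliations intersect the $2$-dimensional transversal $U_{(x,v),\eps}$ in a pair of transverse $1$-dimensional foliations---precisely the restrictions to the disc of the ambient weak stable/unstable foliations of $g^t$ on $SM$.

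For part (\ref{linearization_of_flot}), I exploit the Anosov local product structure in $S\bD_{(\wt x,v)} \cong \mathrm{PSL}(2,\RR)$. With $h^s_a$ and $h^u_a$ the strong stable/unstable horocyclic flows of $\bD$, one has the standard commutations $g^t h^s_a = h^s_{e^t a} g^t$ and $g^t h^u_a = h^u_{e^{-t}a} g^t$. I define $(s, u)$ so that $\{u = 0\}$ is the strong stable horocycle of $(x,v)$ in $\bD$ (already one of the five constituent pieces of $U_{(x,v),\eps}$), $\{s = 0\}$ is the strong unstable horocycle of $(x,v)$ in $\bD$, and general points are reached by the local product structure from these two axes. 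After a suitable reparametrization of the axes matching horocyclic arc length, the commutation relations immediately yield (a), while (b) and (c) follow because, by construction, the curves $\{u = \mathrm{const}\}$ and $\{s = \mathrm{const}\}$ are precisely the leaves of the ambient weak unstable and weak stable foliations restricted to $U_{(x,v),\eps}$, as identified in part (\ref{one_dimensional_foliations}). For (d) and (e), the total geodesicity of $\wt N$ forces $\cF^{uu}(x,v) \cap \mathcal{V}_\eps$ to split into its $SN$-tangent part $\cF^{uu}_N(x,v)$ (of dimension $n-2$) and its normal part $\{s=0\}$ in the disc (of dimension $1$), totalling $n - 1 = \dim \cF^{uu}$; the case of $\cF^{ss}$ is analogous. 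The main obstacle is verifying that the geometric disc $U_{(x,v),\eps}$ (defined via five explicit families of curves in $\bD$) agrees, as a smooth surface, with the image of the algebraic parametrization built from horocyclic flows, beyond the first-order agreement at $(x,v)$---a hyperbolic-trigonometric matching in $\bD$ that is routine but somewhat tedious, and the reason it works is that both constructions produce the Sasaki exponential of $P_{(x,v)}$ up to vector rotation.
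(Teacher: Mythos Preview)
Your proposal is correct and follows essentially the same route as the paper. Two remarks are worth making.

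First, the horocyclic commutation relations you wrote are reversed: with the usual conventions one has $g^t h^s_a = h^s_{e^{-t}a} g^t$ and $g^t h^u_a = h^u_{e^{t}a} g^t$ (stable contracts, unstable expands), and it is this version that yields the formula in (\ref{linearization_of_flot})(a).

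Second, the ``main obstacle'' you flag at the end---matching the geometrically defined disc $U_{(x,v),\eps}$ with the image of an algebraic $\mathrm{PSL}(2,\RR)$-parametrization---is not actually needed, and the paper sidesteps it entirely. Once part (\ref{one_dimensional_foliations}) provides two transverse $1$-dimensional foliations on $U_{(x,v),\eps}$ (the restrictions of $\cF^s$ and $\cF^u$), the coordinates $(s,u)$ are defined \emph{intrinsically on the disc}: parametrize the strong stable and strong unstable curves through $(x,v)$ by arc-length to get the $s$- and $u$-axes, and then assign to any $(y,w)\in U_{(x,v),\eps}$ the pair $(s,u)$ determined by where $\cF^u(y,w)$ meets the $s$-axis and $\cF^s(y,w)$ meets the $u$-axis. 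No global identification of the disc with a horocyclically parametrized surface is required. For part (\ref{tubular_neighborhood}) your disjointness-plus-smooth-variation argument is fine; the paper's version is simply more explicit, projecting a nearby $(y,w)$ orthogonally to $z\in N$, picking out the unique totally geodesic $2$-plane $\bD_{(z,v_z)}$ through $y$ tangent to $w$, and then locating the unique $(x,v)$ along the geodesic of $(z,v_z)$ with $(y,w)\in U_{(x,v)}$.
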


\begin{proof}
 Items \ref{flow_invariance_on_M} and \ref{flip_invariance_on_M} are direct consequences of items \ref{flow_invariance} and \ref{flip_invariance} of Proposition \ref{properties_of_special_set} together with the equivariance with respect to the fundamental group given by Lemma~\ref{lem_invariance_by_fundamental_group}.
  
 We will now prove item \ref{tubular_neighborhood}. Let $(y,w)\in SM$ be a point in a tubular neighborhood (of radius, say, $\eta$ for the Sasaki metric) of $SN$. Fix a lift $S\wt N$ of $N$ and $(\wt y,w)$ lift of $(y,w)$ in the tubular neighborhood of $S\wt N$.
 Let $\wt z \in \wt N$ be the orthogonal projection of $\wt y$ onto $\wt N$. Let $\bD_{(z,v_z)}$ be a hyperbolic plane in $\wt M \simeq \bH^3$ that is normal to $\wt N$, contains $\wt y$ and tangent to $w$ at $\wt y$. The plane $\bD_{(z,v_z)}$ is unique except if the hyperbolic geodesic through $\wt y$ in the direction $w$ is normal to $\wt N$ (equivalently, contains $z$). If this is the case, then it would imply that $d_{\mathrm{Sas}}((\wt y,w),S\wt N)\geq\pi/2$. So, if we assume that $\eta <\pi/4$, then the plane $\bD_{(z,v_z)}$ is uniquely determined. 
 From here, it is easy to see that there exists a unique $(\wt x,v)\in S\wt N$, on the geodesic determined by $(z,v_z)$, such that $(\wt y, w)\in U_{(\wt x,v)}$. Indeed, if we call $c_{v_z}(t)$ the geodesic in $\wt N$ determined by $(z,v_z)$, then one can notice that 
 \[
  \cup_{t\in \mathbb{R}} U_{(c_{v_z}(t), \dot{c_{v_z}}(t))} \supset S\bD_{(z,v_z)},
 \]
which implies the existence of $(\wt x,v)$, and its uniqueness is also immediate. Hence, we proved that for any $\eta$ small enough, the tubular neighborhood of radius $\eta$ around $SN$ is contained in $\cV_{\eps}$ for all $\eps \ge \eta$. Now, by definition of $U_{(x,v),\eps}$, we also have that $\cV_{\eps}$ is contained in the tubular neighborhood around $SN$ of any radius $\eta \ge \eps$. Therefore, 
\[
  \cV_{\eps}= \lbrace (y,w)\in SM \mid d_{\mathrm{Sas}}((y,w), SN) \le \eps \rbrace.
\]

 To prove item \ref{one_dimensional_foliations}, we go back to the universal cover. By definition of $\wt U_{(\wt x,v)}$, there exists a $1$-dimensional submanifold $\wt \cF^{uu}_1(\wt x,v) \subset \wt \cF^{uu}(\wt x,v)$ such that $\wt \cF^{uu}_1(\wt x,v) \subset \wt U_{(\wt x,v)}$. Similarly, there exists a $1$-dimensional submanifold $\wt \cF^{ss}_1(\wt x,v) \subset \wt \cF^{ss}(\wt x,v)$ such that $\wt \cF^{ss}_1(\wt x,v) \subset \wt U_{(\wt x,v)}$.
 Now, since $\wt U_{(\wt x,v)} \subset S\bD_{(\wt x,v)}$ and for $\eps>0$ small enough ($\eps<\pi$), $\wt g^t(\wt x,-v) \cap \wt U_{(\wt x,v), \eps} =\emptyset$, we have for any $(y,w) \in \wt U_{(\wt x,v)}$, 
 \[
 \wt \cF^{s}(y,w) \cap  \wt \cF^{uu}_1(\wt x,v) \neq \emptyset.
 \]
Indeed, in the unit tangent bundle of the hyperbolic plane, every weak stable leaf $L^s$ intersect all the strong unstable leaves, except for the ones on the weak unstable leaf $L^u$ obtained from $L^s$ by rotation by $\pi$.
Hence, we deduce that $\wt \cF^{s}$ restrict to a $1$-dimensional foliation on $\wt U_{(\wt x,v), \eps}$, and similarly for $\wt \cF^{u}$. Projecting down to $M$ gives item \ref{one_dimensional_foliations}.

Thanks to item \ref{one_dimensional_foliations}, we can put coordinates on each $U_{(x,v),\eps}$ in the following way.
We parametrize the submanifolds in the strong leaves $\cF^{uu}_1( x,v)$ and $\cF^{ss}_1( x,v)$ by the arc-length of their projections to $M$. Hence, we can set
\begin{align*}
 \cF^{uu}(x,v) \cap U_{(x,v),\eps} &= \{ (0,u; x,v) \mid |u|< \eps \} \\
 \cF^{ss}(x,v) \cap U_{(x,v),\eps} &= \{ (s,0; x,v) \mid |s|< \eps \},
\end{align*}
where $u$ is the arc-length along $\pi_M\left(\cF^{uu}_1( x,v)\right)$ and $s$ the arc-length along $\pi_M\left(\cF^{ss}_1( x,v)\right)$ (which also happen to correspond to the arc-length along $\cF^{uu}_1( x,v)$ and $\cF^{ss}_1( x,v)$ for the Sasaki metric).

Now, for any $(y,w) \in U_{(x,v),\eps}$, there exists $u$, and $s$ such that
\begin{align*}
 \cF^{s}(y,w) \cap  \cF^{uu}_1(\wt x,v) &= (0,u; x,v)  \text{ and} \\
 \cF^{u}(y,w) \cap  \cF^{ss}_1(\wt x,v) &= (s,0; x,v), 
\end{align*}
so, we set $(y,w) = (s,u; x,v)$.

Then items \ref{linearization_of_flot} b) and c) hold by our definition. Item \ref{linearization_of_flot} a) follows directly from the flow invariance (item \ref{flow_invariance_on_M}) and the fact that, since our metric is hyperbolic, the distance along the strong stable and unstable leaves are contracted/expanded exactly by the factor $e^t$. 
\end{proof}

\subsection{The DA flow} \label{sec_DA_flow}

In this section we deform the geodesic flow $g^t\colon SM\to SM$ in the neighborhood $\cV_\eps$ of $SN$ to obtain the {\it DA flow} $\phi^t$ such that $SN$ is a repeller for $\phi^t$ and the attractor of $\phi^t$ is hyperbolic with $(n-1, 1, n-1)$ splitting. This is standard and well-known to the experts. However we were not able to locate any reference with a rigorously constructed DA flow, so we provide a reasonable amount of details. 

We pick a small $\eps>0$ --- how small to be specified later. Then Proposition~\ref{prop_coordinates} gives the $(s,u,v)$ coordinates on $\cV_\eps$ and we will heavily rely on these coordinates.

Let $\alpha\colon\R\to[0,\sqrt 2]$ be a smooth bump function such that
$$
\alpha(x)=
\begin{cases}
\sqrt 2, \, x=0\\
0, \, |x|\ge 1
\end{cases}
$$
We also assume that $\alpha$ is even and that it is decreasing on $[0,1]$.
Pick $\delta\ll\eps$ --- how much smaller to be specified later --- and define a 2-dimensional bump function
$$
\beta(s,u)=\alpha\Big(\frac s\delta \Big)\alpha\Big(\frac u\delta \Big),
$$
which we now use to define the flow $\phi^t\colon SM\to SM$ as follows. Let $\cU_\delta\subset\cV_\eps$ be given by 
$\cU_\delta=\{(s,u,v): |s|\le \delta, |u|\le \delta\}$ and let 
$$
\dot\phi^t=\dot g^t
$$
outside $\cU_\delta$. When $|s|\le\delta$ and $|u|\le \delta$ we let
$\dot \phi^t=(\dot s^t, \dot u^t, \dot g_N^t)$, where
\begin{equation}
\label{eq_su}
\begin{cases}
\dot s^t= (\beta(s,u)-1)s\\
\dot u^t =u
\end{cases}
\end{equation}
and $g_N$ is the geodesic flow on $SN$.
Recall that by  Proposition~\ref{prop_coordinates}, inside $\cV_\eps$, the geodesic flow is a product of the saddle flow and the geodesic flow on $SN$. Therefore one can easily check that the above definitions yields a smooth flow $\phi^t$. It is also clear that $SN$ is a $\phi^t$-invariant repeller. (Note that $\phi^t$ depends on $\delta$ but we have dropped this dependence from our notation in order to keep notation lighter.)

\begin{prop}
\label{prop_DA} 
There exist an $\eps>0$ and a $\delta>0$ such that the flow $\phi^t\colon SM\to SM$ is a DA flow. That is,
\begin{enumerate}
\item $SN\subset SM$ is a hyperbolic repeller;
\item there is an open neighborhood of the repeller $\cV_\kappa\subset\cU_\delta$ such that the set $\Lambda=SM\backslash \cup_{t\ge 0}\phi^t(\cV_{\kappa})$ is a hyperbolic attractor with $(n-1, 1, n-1)$ splitting;
\item the weak stable manifolds of $\phi^t|_\Lambda$ are submanifolds of the weak stable manifolds for $g^t$.
\end{enumerate}
\end{prop}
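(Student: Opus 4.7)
The approach rests on the fact that $\beta$ depends only on $(s,u)$, so inside $\cV_\eps$ the flow $\phi^t$ is a direct product $\phi^t(s,u,v) = \bigl(\phi^t_{\mathrm{su}}(s,u),\, g_N^t(v)\bigr)$ and $D\phi^t$ splits as a direct sum. This reduces everything to a two-dimensional bifurcation in the $(s,u)$-plane together with the already-Anosov geodesic flow $g_N$ on $SN$.

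For part~(1), I would linearize \eqref{eq_su} at $(s,u)=(0,0)$; since $\beta(0,0)=2$, the Jacobian is the identity, so both transverse directions $T_s,T_u$ are uniformly expanded at rate $1$. Combined with the Anosov splitting $E^{ss}_N \oplus X_N \oplus E^{uu}_N$ of $Dg_N$, this yields a hyperbolic splitting of $T_{SM}|_{SN}$ whose stable summand $E^{ss}_N$ is tangent to $SN$; that is, $SN$ is a hyperbolic repeller.

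For parts~(2) and (3), I would first fix $\rho \in (0,1)$ and then $\kappa < \delta$ small enough that $\cV_\kappa \supset \{\beta \geq 1-\rho\}$ and $\beta > 1$ throughout $\cV_\kappa$, which is possible because $\beta$ is continuous and peaks at $2$ on $SN$. Then $\cV_\kappa$ is a strict trapping neighborhood of the repeller (in backward time $u$ contracts at rate $1$ and $s$ contracts because $\beta > 1$ there), so $\Lambda$ is closed and $\phi^t$-invariant, and any orbit of $\Lambda$ entering $\cU_\delta$ lies in $\{\beta \leq 1-\rho\}$ on that portion. To produce the hyperbolic splitting on $\Lambda$ I would construct invariant cone fields: in the $v$-directions use the Anosov cones of $g_N$; in the $(s,u)$-directions observe that the Jacobian is upper triangular
$$
J_{\mathrm{su}} \;=\; \begin{pmatrix} \beta-1+s\,\partial_s\beta & s\,\partial_u\beta \\ 0 & 1 \end{pmatrix},
$$
so $T_s$ is literally $D\phi^t_{\mathrm{su}}$-invariant with contraction rate $\leq -\rho + O(\delta)$ on $\Lambda$, while $T_u$ is an approximate eigendirection with expansion rate $1$. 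Narrow cones around $T_s \oplus E^{ss}_N$ and $T_u \oplus E^{uu}_N$ are then genuinely invariant, with uniform hyperbolic rates, provided $\delta$ is small enough that the off-diagonal $s\,\partial_u\beta$ is dominated by the saddle gap. Outside $\cU_\delta$ the flow equals $g^t$ and its Anosov cones work; by Proposition~\ref{prop_coordinates}(5) the two families match on the overlap $\cV_\eps \setminus \cU_\delta$, where $E^{ss}_g = T_s \oplus E^{ss}_N$ and $E^{uu}_g = T_u \oplus E^{uu}_N$. A standard cone argument then yields the hyperbolic splitting of $\Lambda$ with $\dim E^s_\phi = \dim E^u_\phi = n-1$. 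For part~(3), this splitting coincides on $\Lambda$ with the Anosov splitting of $g^t$ (same formula in $\cV_\eps$, same flow outside), so unique integrability forces the weak stable leaves of $\phi^t|_\Lambda$ to lie inside those of $g^t$.

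The main obstacle is the cone construction in the $(s,u)$-plane: although $T_s$ is literally invariant, controlling the off-diagonal term $s\,\partial_u\beta$ so that a narrow invariant cone truly contains the stable direction and contracts uniformly is where the quantitative work lies. The remedy is to order the parameter choices as $\rho$ first, then $\kappa$, then $\delta$ small enough that $|s\,\partial_u\beta|$ is a small fraction of $\rho$ on $\cU_\delta$.
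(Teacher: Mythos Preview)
Your outline has the right shape---product structure in $\cV_\eps$, cone fields, a trapping region---but the quantitative core of the cone argument rests on two claims that are both false, and fixing them requires the mechanism the paper actually uses.

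First, the off-diagonal term $s\,\partial_u\beta$ does \emph{not} become small as $\delta\to 0$. Writing it out,
\[
s\,\partial_u\beta \;=\; s\cdot \alpha\!\left(\tfrac{s}{\delta}\right)\cdot \tfrac{1}{\delta}\,\alpha'\!\left(\tfrac{u}{\delta}\right),
\]
and on $\cU_\delta$ we have $|s|\le\delta$, so the bound is $\sqrt{2}\max|\alpha'|$, an absolute constant independent of $\delta$. (The same scaling shows $s\,\partial_s\beta$ is $O(1)$, not $O(\delta)$.) So your remedy of ``$\delta$ small enough that $|s\,\partial_u\beta|$ is a small fraction of $\rho$'' cannot work, and with it the narrow-cone strategy collapses. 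The paper handles this by taking the unstable cone in the $(s,u)$-plane to be \emph{wide}, with half-aperture exactly equal to that $O(1)$ bound $\bar c=\sqrt{2}\max|\alpha'|$; invariance is then checked by an infinitesimal rotation computation, and eventual expansion comes from the second row of $D\phi^t_{\mathrm{su}}$, which is just $e^t$. For the $s$-contraction the paper does not rely on $\beta-1$ being bounded away from zero; it uses the sign observation $s\,\partial_s\beta\le 0$ together with strict negativity of $\partial_s\beta\cdot s$ at the saddle points.

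Second, and relatedly, your trapping claim that $\Lambda\cap\cU_\delta\subset\{\beta\le 1-\rho\}$ cannot hold. The planar flow has hyperbolic fixed points at $(\pm\bar s,0)$ where by definition $\beta=1$; these points (times $SN$) lie in $\Lambda$. Consequently the conditions you impose on $\cV_\kappa$ (containing $\{\beta\ge 1-\rho\}$ while having $\beta>1$ throughout) are mutually incompatible. The paper instead excises only the oval $\hat\cE=\{\beta>1\}$ (slightly thickened) and works on its complement, accepting that $\beta-1$ vanishes on $\Lambda$ and compensating with the sign of $s\,\partial_s\beta$.

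Finally, the matching of cones across $\cV_\eps\setminus\cU_\delta$ is not automatic: Proposition~\ref{prop_coordinates} gives $E^{uu}_g=T_u\oplus E^{uu}_N$ only along $SN$, and merely $C^0$-close to it nearby. This is precisely where smallness of $\delta$ enters in the paper: it forces any orbit passing from $D_G$ to $D_P$ (or back) to spend a long time in the transition region $D_T$, where $\phi^t=g^t$, so the geodesic cones are squeezed enough to land inside the wide product cones in $D_P$. Your proof sketch treats this overlap as a triviality, and it is not.
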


\begin{proof}
The fact that $SN$ is a $\phi^t$-invariant repeller is immediate from the definition of $\phi^t$. We pick a small neighborhood $\cV_\kappa$ of $SN$ such that $\dot\phi^t$ is transverse to $\partial\cV_\kappa$. Then the set  $\Lambda=SM\backslash \cup_{t\ge 0}\phi^t(\cV_{\kappa})$ is a closed $\phi^t$-invariant attractor. To prove hyperbolicity of $\Lambda$ we will follow the usual route of constructing the stable and unstable cones. Note that $\phi^t$ is constructed by ``pushing away" along the horizontal curves $\{s=\mbox{const}\}$ inside $\cV_\eps$. According to Proposition~\ref{prop_coordinates} these horizontal curves subfoliate the weak stable foliation of $g^t$. Therefore, the weak stable foliation of $g^t$ is invariant under $\varphi^t$  and we can look for stable cones $\cC^{ss}$  for $\phi^t$ inside of the weak stable distribution $E^s$, which will be easy. As a byproduct we will automatically obtain that weak stable leaves for $\Lambda$ are contained (some are not complete) in the weak stable leaves of $g^t$ as posited in item 3 of the proposition. Constructing the unstable cones $\cC^{uu}$ is more challenging and we will need to make the parameter $\delta$ sufficiently small to assure invariance and expansion of the unstable cones. We proceed to explain the construction of the cones.\\
{\it The domains.} Consider the set $\cE\subset [-\delta, \delta]\times[-\delta, \delta]$ given by the condition $\beta(s,u)>1$. Then, according to the definition of $\beta$, this is a symmetric oval-shaped domain. Note that according to~(\ref{eq_su}) we have $\dot s=0$ on the boundary $\partial\cE$ and, hence, the flow $(s^t,u^t)$ is a vertical outward flow on $\partial \cE$ except at two fixed points $(-\bar s,0)$ and $(\bar s, 0)$ which are indicated on Figure~\ref{fig_DA}. Here, $\bar s$ and $-\bar s$ are the solutions to $\beta(s,0)=1$. Hence, with the exception of the points $(-\bar s, 0)$ $(\bar s, 0)$ the flow $(s^t, u^t)$ is an outward flow on $\partial\cE$. We let
$$
\hat \cE=(s^{t_0}, u^{t_0})(\cE)\supset \cE,
$$
for some small $t_0>0$. From now on consider $\hat\cE$ instead of $\cE$ because it has the following property
\begin{equation}
\label{eq_bound}
\forall (s,u)\notin\hat\cE \;\;\;\; \beta(s,u)<1 \,\,\, \mbox{unless} \,\,\,\, (s,u)=(\pm \bar s, 0)
\end{equation}

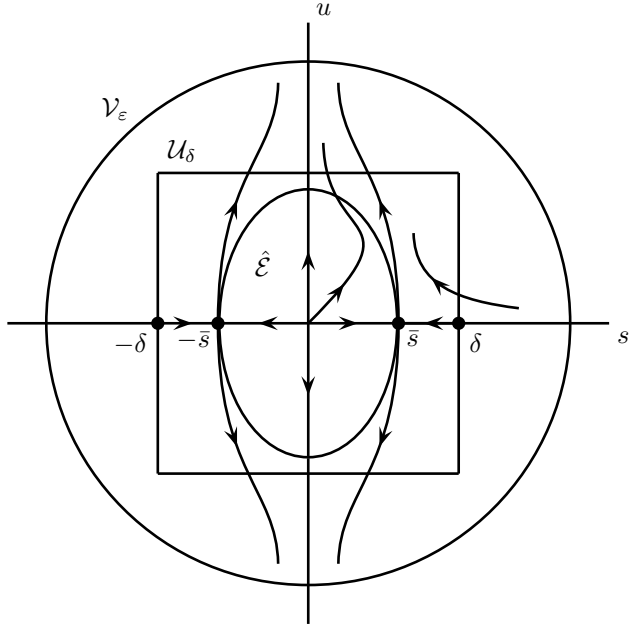
\begin{figure}
\centering
\begin{pspicture}(-4,-4)(4,4)
  \pscircle[linewidth=1pt]{3.5}

  \psline[linewidth=1pt](2,-2)(2,2)
\psline[linewidth=1pt](-2,-2)(-2,2)
\psline[linewidth=1pt](-2,-2)(2,-2)
\psline[linewidth=1pt](-2,2)(2,2)

\psline[linewidth=1pt](0,-4)(0,-1.8)
\psline[linewidth=1pt,ArrowInside=->,arrowscale=1.5](0,0)(0,-1.8)
\psline[linewidth=1pt](0,4)(0,1.8)
\psline[linewidth=1pt,ArrowInside=->,arrowscale=1.5](0,0)(0,1.8)

\psline[linewidth=1pt](-4,0)(-2,0)
\psline[linewidth=1pt,ArrowInside=->,arrowscale=1.5](0,0)(-1.2,0)
\psline[linewidth=1pt,ArrowInside=->,arrowscale=1.5](-2,0)(-1.2,0)
\psline[linewidth=1pt](4,0)(1.2,0)
\psline[linewidth=1pt,ArrowInside=->,arrowscale=1.5](0,0)(1.2,0)
\psline[linewidth=1pt,ArrowInside=->,arrowscale=1.5](2,0)(1.2,0)

\psellipse[linewidth=1pt](0,0)(1.2,1.8)

\psbezier[linewidth=1pt,ArrowInside=->,arrowscale=1.5](1.2,0)(1.2,2)(0.42,2.2)(0.4,3.2)
\psbezier[linewidth=1pt,ArrowInside=->,arrowscale=1.5](1.2,0)(1.2,-2)(0.42,-2.2)(0.4,-3.2)

\psbezier[linewidth=1pt,ArrowInside=->,arrowscale=1.5](-1.2,0)(-1.2,2)(-0.42,2.2)(-0.4,3.2)
\psbezier[linewidth=1pt,ArrowInside=->,arrowscale=1.5](-1.2,0)(-1.2,-2)(-0.42,-2.2)(-0.4,-3.2)

\psbezier[linewidth=1pt,ArrowInside=->,arrowscale=1.5,ArrowInsidePos=0.25](0,0)(1.5,1.5)(0.24,1)(0.2,2.4)

\psbezier[linewidth=1pt,ArrowInside=->,arrowscale=1.5,ArrowInsidePos=0.65](2.8,0.2)(2,0.3)(1.42,0.5)(1.4,1.2)

\psdot[dotsize=5pt](-1.2,0)
\psdot[dotsize=5pt](1.2,0)
\psdot[dotsize=5pt](-2,0)
\psdot[dotsize=5pt](2,0)
\uput{0.2}[-135](-2,0){$-\delta$}
\uput{0.2}[-45](2,0){$\delta$}
\uput{0.15}[-135](-1.2,0){$-\bar s$}
\uput{0.15}[-45](1.2,0){$\bar s$}
\rput(-0.6,0.8){$\hat{\mathcal{E}}$}
\uput{0.2}[45](-2,2){$\mathcal{U}_{\delta}$}
\put(-2.75,2.75){$\mathcal{V}_{\varepsilon}$}
\uput{0.15}[-45](4,0){$s$}
\uput{0.15}[45](0,4){$u$}
\end{pspicture}
      \caption{The DA flow in the neighborhood $\mathcal{V}_{\varepsilon}$}
      \label{fig_DA}
\end{figure}

According to our definition of $\hat\cE$ we have $(\hat\cE\times SN)\cap\Lambda=\varnothing$ and $(\partial\hat\cE\times SN)\cap\Lambda=\{(-\bar s,0), (\bar s,0)\}$. Hence, to show that $\Lambda$ is a hyperbolic set it is sufficient to construct invariant stable/unstable cones on the set $SM\backslash (\hat\cE\times SN)$, which contains $\Lambda$. Further, we subdivide $SM\backslash (\hat\cE\times SN)$ into three subdomains.
\begin{enumerate}
\item $D_G=SM\backslash\cV_\eps$ --- the domain which is far from the support of the DA deformation (and we will be able to use the cones of the geodesic flow there);
\item $D_P=\cU_\delta\backslash (\hat \cE\times SN)$  --- the domain where the deformation took place with $\hat\cE\times SN$ excised (and the cones have to be carefully constructed here);
\item $D_T=\cV_\eps\backslash \cU_\delta$ --- the transition region (which is needed to interpolate the cones between $D_G$ and $D_P$).
\end{enumerate}
{\it The stable cones.} Recall that $\phi^t$ preserves the weak stable distribution $E^s$ of $g^t$. Denote by $E^{ss}$ the $(n-1)$-dimensional strong stable distribution of $g^t$ and let $\cC^{ss}$ be the cone fields about $E^{ss}$ inside $E^s$ of a fixed small aperture (= the angle of the cone), say $\pi/10$. We claim that outside of $\hat \cE\times SN$ the cone family $\cC^{ss}$ is {\it invariant}, \ie for all $t\ge 0$
$$
\phi^{-t}(\cC^{ss})\subset \cC^{ss}
$$
and {\it contracting}, \ie there exists $\mu>1$ such that for all $t\ge 0$ and $v\in\cC^{ss}$
$$
\|D\phi^{-t}v\|\ge \mu^t\|v\|.
$$
Within domains $D_G$ and $D_T$ these properties hold just because $\phi^t=g^t$. Within the domain $D_P$, recall
\begin{align*}
& g^t(s_0, u_0, v)=(e^{-t}s_0, e^tu_0, g^t_N(v)),\\
& \phi^t(s_0,u_0, v)=(s^t(s_0, u_0), u^t(s_0, u_0), g_N^t(v))
\end{align*}
and $u^t(s_0, u_0)=e^tu_0$. By Proposition~\ref{prop_coordinates}, within $\cV_\eps$ the leaves of the weak stable foliation $\cF^s$ have the product form $\{(s,u): u = const\}\times \cF^s_N$. Further, again by Proposition~\ref{prop_coordinates},  the (strong) stable leaves through $(0, 0, v)$, $v\in SN$, are given by $\{(s,0)\}\times \cF^{ss}_N$. It follows that for sufficiently small $\delta$ the cone field $\cC^{ss}$ on $\cU_\delta$ contains the sum distribution $\frac{\partial}{\partial s}\oplus E_{SN}^{ss}$. Hence, to show that $\cC^{ss}$ is invariant and contracting it suffices to show that $s^t$ is an exponential contraction. We calculate
$$
\frac{d(\log(\dot s^t))}{dt}=\frac{d\dot s^t}{ds}=(\beta-1)+\frac{\partial\beta}{\partial s}s
$$
Now recall that outside $\hat\cE$ we have $\beta-1\le 0$. Also $\frac{\partial\beta}{\partial s}s\le 0$. Moreover, 
$\frac{\partial\beta}{\partial s}( \bar s, 0) \bar s< 0$
by our choice of $\beta$. Hence by combining with~(\ref{eq_bound}) we indeed have
\begin{equation}
\label{eq_sigma}
(\beta-1)+\frac{\partial\beta}{\partial s}s<-\xi<0,
\end{equation}
for some $\xi>0$.\\

\noindent {\it The unstable cones: the scheme.} The unstable cones have to be constructed, of course, inside of the full tangent space. Inside the domain $D_G$ we define $\cC^{uu}$ as the cone field about $E^{uu}$ of aperture $\pi/10$. Within the domain $D_P$ the flow $\phi^t$ has the product form and, hence, it is convenient to look for $\cC^{uu}$ in the product form
\begin{equation}
\label{eq_product}
\cC^{uu}=\cC^{uu}_{l-r} \times \cC^{uu}_{SN},
\end{equation}
where $\cC^{uu}_{l-r}$ is a 2-dimensional cone family to be carefully defined on $[-\delta, \delta]\times[-\delta, \delta]\backslash \hat\cE$ and $\cC^{uu}_{SN}$ is the cone field in $T(SN)$ about $E^{uu}_{SN}$ of aperture $\pi/10$.

Note that by our choice of domains there exists an upper bound on time which an orbit of $\phi^t$ can spend inside of $D_T=\cV_\eps\backslash\cU_\delta$. (This upper bound, of course, depends on $\delta$.) Therefore we do not need to explicitly define the cones inside $D_T$. However we will need to check invariance of the cone field within $D_G$ and $D_P$ and also when an orbit travels from $D_G$ to $D_P$ or vice versa. Also we will check eventual expansion of vectors in the cones. Then this will be sufficient to conclude hyperbolicity of $\Lambda$ from the standard cone criterion (see, for instance, \cite{KH}).

\noindent {\it The unstable cones: invariance.} Clearly $\cC^{uu}$ is invariant in $D_G$. Because $\phi^t=(s^t, u^t, g_N^t)$ in $D_P$, according to our definition~(\ref{eq_product}), to check invariance of $\cC^{uu}$ within $D_P$ we only need to make sure that $\cC^{uu}_{l-r}$ is invariant under $(s^t, u^t)$. 

We define $\cC^{uu}_{l-r}$ using $(s,u)$ coordinates as a ``constant" cone family. Namely, given a point $(s_0, u_0)\in [-\delta, \delta]\times[-\delta, \delta]\backslash \hat\cE$ we set $\cC^{uu}_{l-r}$ to be the cone of vectors based at $(s_0, u_0)$ which lie between
$v_l=
\left(
\begin{smallmatrix}
l\\
1
\end{smallmatrix}
\right)
$
and
$v_r=
\left(
\begin{smallmatrix}
r\\
1
\end{smallmatrix}
\right)
$
(the cone contains the vertical vector
$
\left(
\begin{smallmatrix}
0\\
1
\end{smallmatrix}
\right)
$
).
Here $l<0<r$ are constants which are to be determined. To establish invariance of $\cC^{uu}_{l-r}$ under $(s^t, u^t)$ we need to check that $v_l$ rotates clockwise under $D\phi^t$ and $v_r$ rotates counter-clockwise. This is a local property. To check it we linearize $(s^t, u^t)$. The Jacobian at a point $(s_0, u_0)$ is given by
\begin{multline*}
J=J(s_0, u_0)=
\begin{pmatrix}
\frac{\partial\dot s^t}{\partial s} & \frac{\partial\dot s^t}{\partial u}  \\
\frac{\partial\dot u^t}{\partial s} & \frac{\partial\dot u^t}{\partial u}  
\end{pmatrix}
=\\
\begin{pmatrix}
\beta(s_0, u_0)-1+ \frac{\partial\beta}{\partial s}(s_0, u_0)s_0& \frac{\partial\beta}{\partial u}(s_0, u_0)s_0 \\
0 & 1
\end{pmatrix}
\stackrel{\mathrm{def}}{=}
\begin{pmatrix}
-\sigma(s_0, u_0)& -c(s_0, u_0) \\
0 & 1
\end{pmatrix}
\end{multline*}
By~(\ref{eq_sigma}) we have $\sigma>0$. It is also clear that $c\ge 0$. We will also need the following upper bound on $c$.
\begin{equation}
\label{eq_c}
c(s_0, u_0)=-\alpha\left(\frac{s_0}\delta\right)\frac1\delta\alpha'\left(\frac{u_0}\delta\right)s_0< \sqrt2\frac1\delta\max|\alpha'|\delta
=\sqrt2\max|\alpha'|\stackrel{\mathrm{def}}{=}\bar c.
\end{equation}
Note that this upper bound is independent of $\delta$.

Now, the invariance of $\cC^{uu}_{l-r}$ can be expressed infinitesimally as follows
$$
\langle J
\left(
\begin{smallmatrix}
r\\
1
\end{smallmatrix}
\right), 
\left(
\begin{smallmatrix}
-1\\
r
\end{smallmatrix}
\right)\rangle>0
\,\,\,\,\,\,\mbox{and}\,\,\,\,\,\,
\langle J
\left(
\begin{smallmatrix}
l\\
1
\end{smallmatrix}
\right), 
\left(
\begin{smallmatrix}
1\\
-l
\end{smallmatrix}
\right)\rangle>0.
$$
The first inequality simplifies as
$$
\sigma r+c+r>0,
$$
which clearly holds for any $r>0$. The second inequality yields the following condition on $l$
$$
l<\frac{-c}{\sigma+1}.
$$
Hence we define $\cC^{uu}_{l-r}$ (and hence  $\cC^{uu}$) by setting $l=-\bar c$ and $r=\bar c$. Then, using~(\ref{eq_c}) we have
$$
l< -c<\frac{-c}{\sigma+1},
$$
which implies that $\cC^{uu}_{l-r}$ is invariant in $D_P$ by the above discussion.

To complete the proof of invariance of $\cC^{uu}$ we need to show that if an orbit of $\phi^t$ travels from $D_G$ to $D_P$ then the cone  at the starting point of the orbit from the cone field $\cC^{uu}|_{D_G}$ maps inside $\cC^{uu}|_{D_P}$.  (We also need the symmetric property for orbits travelling from $D_P$ to $D_G$. Its proof is completely analogous, so we omit it.)

We extend $\cC^{uu}$ to the boundary $\partial D_G\simeq \partial \cV_\eps$. To see this invariance property recall that the distribution $E^{uu}$ is the axis of $\cC^{uu}|_{D_G\cup \partial D_G}$.  Further, for small $\e$, $E^{uu}$ is very close (but does not coincide) to the sum distribution $\frac\partial{\partial u}\oplus E^{uu}_N$ inside $\cV_\eps$ and the angle between these distributions goes to zero as the base point approaches $SN\subset \cV_\eps$. It follows that, provided that $\eps$ is sufficiently small, the cone field
$\cC^{uu}|_{\partial D_G}$ is contained inside the cone field $\hat \cC^{uu}|_{\partial D_G}$ which is the cone field about $\frac\partial{\partial u}\oplus E^{uu}_N$ and has aperture $\pi/9$. (Recall that aperture of $\cC^{uu}|_{D_G}$ is $\pi/10$.)

Now both $\cC^{uu}|_{D_P}$ and $\hat \cC^{uu}|_{\partial D_G}$ are cone fields centered at the same distribution $\frac\partial{\partial u}\oplus E^{uu}_N$. (Note that according to our definitions $\cC^{uu}|_{D_P}$ is a ``product cone field" while $\hat \cC^{uu}|_{\partial D_G}$ is a ``round cone field", but this is a minor technicality which does not cause any trouble.) Apertures of both of these cone fields are fixed and do not depend on $\delta$. Further, the distribution $\frac\partial{\partial u}\oplus E^{uu}_N$ stays $D\phi^t$-invariant as an orbit crosses $D_T$ on its way from $D_G$ to $D_P$. And from the product form of the flow, the distribution $\frac\partial{\partial u}\oplus E^{uu}_N$ is exponentially expanding within $D_T$ while the complementary distribution $\frac\partial{\partial s}\oplus E^{ss}_N$ is exponentially contracting. It remains to notice that the time it takes to reach $D_P\subset\cU_\delta$ from $D_G$ increases as $\delta\to 0$. Hence, by choosing sufficiently small $\delta$, we ensure that cones $\hat \cC^{uu}|_{\partial D_G}$ map inside the cones $\cC^{uu}|_{D_P}$ for orbits traveling from $D_G$ to $D_P$. Hence the cones $\cC^{uu}|_{\partial D_G}$ map inside $\cC^{uu}|_{D_P}$ as well.

\begin{remark}
The distribution $\frac\partial{\partial u}\oplus E^{uu}_N$ is {\it not } globally invariant under $D\phi^t$, but it stays invariant for finite orbit segments in $\cV_\eps$.
\end{remark}

\noindent {\it The unstable cones: expansion.} Because $\phi^t|_{D_T\cup D_G}=g^t|_{D_T\cup D_G}$ the cone field is infinitesimally expanding and, moreover, the vectors from $\cC^{uu}|_{D_G}$ which travel to $D_T$ keep expanding while in $D_T$. Therefore we only need to establish exponential expansion property for $\cC^{uu}|_{D_P}$. We will show that vectors in $\cC^{uu}|_{D_P}$ are {\it eventually expanding} \ie
$$
\exists C>0: \,\forall t>0\,\, \forall v\in \cC^{uu}|_{D_P}\,\, \|D\phi^t v\|\ge C\mu^t\|v\|
$$
where $\mu>1$.
Eventual expansion of vectors in the cones is a weaker property, but still sufficient for hyperbolicity. 

Recall that  $\cC^{uu}|_{D_P}$ is a product~(\ref{eq_product}) and hence, obviously, we only need to show that $\cC^{uu}_{l-r}$ is eventually expanding under $(s^t, u^t)$. Recall that 
$$(s^t, u^t)(s_0, u_0)=(s^t(s_0, u_0), e^tu_0)$$
 and, hence,
$$
D(s^t, u^t)=
\begin{pmatrix}
* & * \\
0 & e^t
\end{pmatrix}
$$
For a vector 
$v=
\left(
\begin{smallmatrix}
a\\
1
\end{smallmatrix}
\right)\in \cC^{uu}_{l-r}$ (\ie $|a|\le \bar c$) we calculate
$$
D(s^t, u^t)v=
\left(
\begin{smallmatrix}
\hat a\\
e^t
\end{smallmatrix}
\right).
$$
And now we verify eventual expansion as follows
$$
\frac{\|D(s^t, u^t)v\|^2}{\|v\|^2}=\frac{\hat a^2+e^{2t}}{a^2+1}\ge \frac{e^{2t}}{\bar c^2+1}.
$$
Hence we have established hyperbolicity of $\Lambda$ by verifying the assumptions of the cone criterion.
\end{proof}

\subsection{Anosov flow on the twisted double}

We are now ready to give the
\begin{proof}[Proof of Theorem~\ref{thm2}]
We follow closely the argument of Franks-Williams~\cite{FW}. Recall that, according to the discussion in Subsection \ref{subsec_total_geod_hypersurface}, for any $n\ge 2$, we can find an $n$-dimensional hyperbolic manifold $M$ with a totally geodesic hypersurface $N\subset M$.
Then we can use the DA construction explained above and our starting point is the DA flow $\phi^t\colon SM\to SM$ given by Proposition~\ref{prop_DA}.
We  excise a small neighborhood of $SN$,
$$
\cV_\kappa=\{x\in SM: d_\mathrm{Sas}(x, SN)\le \kappa \},
$$
 out of $SM$. This gives a manifold with boundary $(W_1,\partial W_1)$ which we equip with the DA flow $\phi^t$. By Proposition~\ref{prop_DA} the flow $\phi^t$ is transverse to $\partial W_1$ provided that $\kappa>0$ is sufficiently small. Then we take a second copy of the same manifold which we denote by $(W_2,\partial W_2)$ and equip it with the flow $\hat\phi^t=\phi^{-t}$, that is, we reverse the time direction of the DA flow. Given a diffeomorphism $\omega\colon \partial W_1\to\partial W_2$ we can form a closed manifold $W$ by pasting together $W_1$ and $W_2$ along their boundaries using $\omega$
$$
W=W_1\cup_\omega W_2.
$$
The flows $\phi^t$ and $\hat \phi^t$ also paste together and yield a flow $\psi^t$. (In fact, for the flow $\psi^t$ to be well defined one also has to add a collar between $W_1$ and $W_2$ and extrapolate $\phi^t$ and $\hat\phi^t$ so that they are both orthogonal to the core of the collar, this point was thoroughly addressed in~\cite{FW}.)

According to Proposition~\ref{prop_DA}, the flow $\psi^t\colon W\to W$ is a flow with a hyperbolic attractor $\Lambda_1$ and a hyperbolic repeller $\Lambda_2$. The remaining points flow from $\Lambda_2$ to $\Lambda_1$ and hence are wandering. Moreover, by construction, both hyperbolic sets $\Lambda_1$ and $\Lambda_2$ have $(n-1,1,n-1)$ splitting. According to a criterion of Ma\~n\'e~\cite{Man} the flow $\psi^t$ is Anosov provided that we can additionally verify that the (weak) stable foliation of $\Lambda_1$ and the (weak) unstable foliation of $\Lambda_2$ intersect transversely. Intersections between stable leaves of $\Lambda_1$ and unstable leaves of $\Lambda_2$ occur along wandering orbits.

Again by construction, each wandering orbit intersects $\partial W_1$ transversely. Therefore, it is enough to check the transversality property at the points on $\partial W_1$, which we can identify with $\partial\cV_\kappa$.
By Proposition~\ref{prop_DA} (the $W_1$ component of) the stable manifold of a point in $\Lambda_1$  is embedded into the stable manifold of $g^t$ and (the $W_2$ component of) the unstable manifold of a point in $\Lambda_2$ is embedded into the unstable manifold of $g^{-t}$ (= stable manifold of $g^t$).
 It follows that the flow $\psi^t$ satisfies Ma\~n\'e's transversality condition provided that
 $\omega(\bar\fs)\pitchfork \bar\fs$, where $\bar \fs=\fs\cap \partial\cV_\kappa$.
 
 In order to construct such $\omega$ recall that by Proposition~\ref{prop_coordinates} the flip map $\sigma\colon SM\to SM$ given by $\sigma(v)=-v$ leaves $\cV_\kappa$ invariant.
 Clearly, the flip map $\sigma$ maps flow lines to flow lines, reversing the time, and it sends stable horocycles to unstable horocycles. Hence, letting $\omega=\sigma|_{\partial \cV_\kappa}$, we have
 $\omega(\bar\fs)=\bar\fu$, where $\bar \fu=\fu\cap \partial\cV_\kappa$. Clearly, $\bar \fs$ is transverse to $\bar \fu$ and, hence, the proof is complete.
 \end{proof}

We end this article by sketching an argument to obtain transitive examples in a fairly easy way. The idea is to take an arbitrarily large cover in order to garantee hyperbolicity. This argument was recently used in \cite{BGP} and similar argument were used before, see e.g. \cite{FJ78,FG12}. Note that passing to a finite cover changes the homotopy type of the manifold unless the manifold admits self covers (such as tori).
\begin{prop}\label{prop_transitive_examples}
 For any odd $n$, there exists a non-algebraic, transitive Anosov flow on a $n$-manifold.
\end{prop}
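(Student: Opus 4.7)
The plan is to adapt the surgery of Theorem~\ref{thm2} so that the two hyperbolic pieces are identified through a single self-gluing of one manifold, giving a transitive Anosov flow whose hyperbolic invariant set fills the whole ambient manifold rather than decomposing into an attractor--repeller pair. Start with a closed hyperbolic manifold $M$ of dimension $(n+1)/2$ with a two-sided totally geodesic hypersurface $N$, and pass to a finite cover $\hat M \to M$ in which the preimage of $N$ contains two disjoint two-sided components $\hat N_1$, $\hat N_2$. Applying the DA construction of Proposition~\ref{prop_DA} simultaneously in disjoint neighborhoods of $S\hat N_1$ and $S\hat N_2$ produces a flow on $S\hat M$ with two disjoint hyperbolic repellers and a common hyperbolic attractor whose splitting has equal stable and unstable dimensions. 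I would then excise small tubular neighborhoods $\cV_\kappa^1, \cV_\kappa^2$ of the two repellers to obtain a manifold $W$ with two boundary components on each of which the flow points inward, and glue these two boundaries to each other by a diffeomorphism built from the flip map $\sigma$ of Proposition~\ref{prop_coordinates}, reversing the flow direction on one side. The resulting closed manifold supports a candidate flow $\psi^t$ in which orbits that would have escaped through $\partial \cV_\kappa^1$ now re-enter through $\partial \cV_\kappa^2$, leaving no wandering region between the former attractor and the two former repellers.

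The transversality input for Ma\~n\'e's criterion comes from $\sigma$ exactly as in the proof of Theorem~\ref{thm2}: the flip interchanges stable and unstable horocycle foliations, so along the gluing locus the weak stable and weak unstable foliations of the global hyperbolic set meet transversely. To promote this into the global cone conditions required for hyperbolicity, I would pass to a further, sufficiently large finite cover of $\hat M$, so that the two gluing hypersurfaces can be arranged to lie at arbitrarily long flow-distance from each other. Every orbit travelling from one gluing hypersurface to the other then spends an arbitrarily long time in the region $D_G$ where $\psi^t$ agrees with the hyperbolic geodesic flow and the classical hyperbolic cones dominate, so any deterioration of the cones at the boundary is absorbed by this large unperturbed transit; this is the large-cover mechanism used in~\cite{BGP, FJ78, FG12}. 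Transitivity of $\psi^t$ then follows because the geodesic flow on $SM$ is transitive and the DA perturbation together with the gluing is supported on an arbitrarily small portion of the cover.

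The main obstacle is checking that the unstable cone field built in the proof of Proposition~\ref{prop_DA} is mapped strictly inside itself across the flipped identification of $\partial \cV_\kappa^1$ with $\partial \cV_\kappa^2$; this is precisely what the long unperturbed transit through $D_G$ in a sufficiently large cover buys us, and the required infinitesimal estimates parallel those carried out in the proof of Proposition~\ref{prop_DA}. Non-algebraicity is then an immediate consequence of Corollary~\ref{cor_algebraic_flows} together with Tomter's classification: any algebraic Anosov flow with equal stable and unstable dimensions is, up to finite covers, a nil-suspension over a rank-one locally symmetric geodesic flow, and the DA surgery near the former $S\hat N_i$ creates periodic orbits whose linear data is incompatible with such a rigid homogeneous origin.
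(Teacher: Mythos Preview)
Your construction has a fatal orientation problem at the gluing step. You perform two \emph{repelling} DA deformations, so on the resulting manifold-with-boundary $W$ the flow points \emph{inward} across both $\partial\cV_\kappa^1$ and $\partial\cV_\kappa^2$. There is no way to identify two such boundary components and obtain a smooth flow on the closed manifold: along the gluing hypersurface the flow would have to enter $W$ from both sides simultaneously. The phrase ``reversing the flow direction on one side'' does not fix this --- the flow is a global vector field on $W$, and you cannot reverse it on a piece without destroying the common attractor you just built. The paper's construction avoids exactly this by performing one \emph{repelling} DA at $SN_1$ and one \emph{attracting} DA at $SN_2$; then on the complement of the two neighborhoods the flow enters through one boundary component and exits through the other, and the self-gluing via the flip map is coherent.

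Several downstream steps also need revision once you adopt the attractor/repeller pairing. First, with an attracting and a repelling DA on the same piece, the weak stable and unstable foliations of the geodesic flow are no longer both preserved, so Ma\~n\'e's criterion is not available; the paper explicitly falls back on a direct cone argument, using the large cyclic cover to guarantee long unperturbed transit between the two modified regions. Second, your transitivity claim (``the perturbation is supported on a small portion of the cover'') is not an argument; the paper shows transitivity by first proving the hyperbolic invariant set of the pre-glued flow is transitive (following~\cite{BBY}) and then checking that every local stable/unstable leaf on the gluing hypersurface meets the unstable/stable lamination of that set. Finally, your non-algebraicity appeal to Corollary~\ref{cor_algebraic_flows} is empty here: the example has equal stable and unstable dimensions, so that corollary does not exclude it from being algebraic. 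The paper instead exhibits pairs of distinct periodic orbits that are freely homotopic (the doubled copies of $SN_1$ created by the DA) coexisting with periodic orbits alone in their free homotopy class, a pattern impossible for a homogeneous flow.
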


\begin{proof}[Sketch of proof]
 Let $M$ be a $d$-dimensional hyperbolic manifold which admits a non-separating totally geodesic hypersurface $N$. For any integer $k\ge 1$ one can construct a $k$-fold cover of $M$ in the following way. Cut $M$ along $N$ to obtain a manifold $M'$ with boundary $\partial M'= N_a\cup N_b$, two copies of $N$. Now take $k$ copies of $M'$, and glue them cyclically along their boundaries to obtain a (closed) manifold $M_k$. By taking $k$ large enough, we obtain a hyperbolic manifold $M_k$ which contains two totally geodesic hypersurfaces $N_1$ and $N_2$ which are as far from each other as we would like them to be.

 Now consider the geodesic flow $g^t$ on $SM_k$. One can perform a repelling DA construction for $g^t$ along $SN_1$ (as in Section~\ref{sec_DA_flow}), and an attracting DA along $SN_2$. Call $\varphi^t$ the flow obtained in this way. So $SN_1$ is a repeller of $\varphi^t$, $SN_2$ is an attractor, and there exists two neighborhoods $\cV_1$ and $\cV_2$ of $SN_1$ and $SN_2$ such that 
 $SM_k \smallsetminus \left(\cup_{t\ge 0} \varphi^t(\cV_1) \cup \cup_{t\ge 0} \varphi^{-t}(\cV_2)\right)$ is an invariant set for $\varphi^t$.
 To show that this set is in fact hyperbolic, one can redo the cone argument in the proof of Proposition \ref{prop_DA}, except that, now, one needs to consider certain modified cones (\ie the unstable cones in the proof of Proposition \ref{prop_DA}) for \emph{both} the stable and unstable cones. The fact that the distance between $SN_1$ and $SN_2$ can be assumed to be arbitrarily large, allows one not to check invariance explicitly with the cones going from a neighborhood of $SN_1$ to a neighborhood of $SN_2$, since enough hyperbolicity will be gained along the way.
 
 Now, as in the proof of Theorem \ref{thm2}, we can consider the flow $\varphi^t$ on the manifold $SM_k\smallsetminus\left(\cV_1\cup\cV_2\right)$ and glue $\partial\cV_1$ onto $\partial\cV_2$ using the flip map. Let $V_k$ be the manifold obtained by such gluing, it supports a flow $\bar{\varphi}^t$ obtained from $\varphi^t$ by gluing.
 
 Notice that, since we did two opposite DAs on the same manifold, we cannot preserve the stable/unstable foliations anymore. Hence the gluing using the flip map does not send stable to stable and unstable to unstable, as opposed to the non-transitive example above. However, the proof of Proposition \ref{prop_DA} shows that the new stable/unstable directions are in a cone of aperture at most $\pi/9$ from the stable/unstable directions of the original geodesic flow. Hence, the flip map will still glue the stable foliations transversely to the unstable ones.
 
However, just verifying transversality is not enough, as we cannot use Ma\~n\'e's characterization anymore to prove that $\bar{\varphi}^t$ is Anosov. So one has to go back to the cone criterion. The difficulty of proving that the cone criterion is verified for general flows build in such a way is to deal with cones along the orbits that will pass through the gluing repeatedly.
 Once again, taking a sufficiently large cover $M_k$, allows us to sidestep this difficulty. Since the distance between $N_1$ and $N_2$ in $M_k$ can be as large as we want, any orbit that goes from $\partial\cV_2$ to $\partial\cV_1$ will spend a very long time in $SM_k\smallsetminus\left(\cV_1\cup\cV_2\right)$, picking up enough hyperbolicity along the way to correct for whatever happened in the neighborhood of the gluing. Hence, for $k$ big enough, the flow $\bar{\varphi}^t$ on $V_k$ is Anosov.
 
 The fact that $\bar{\varphi}^t$ is not algebraic is easy to establish. For instance, one can notice that there exists pairs of distinct periodic orbits that are freely homotopic (and freely homotopic to the inverse of another pair of orbits). These are the periodic orbits contained in the two copies of the invariant submanifolds homeomorphic to $SN_1$ that are created by the DA construction. Whereas, the periodic orbits of $g^t$ that stayed away from the neighborhood of $SN_1$ and $SN_2$ are still periodic orbits of $\bar{\varphi}^t$ and are alone in their free homotopy class.
 
 Finally, we need to show that $\bar{\varphi}^t$ is transitive. To do that, we use the idea described in \cite{BBY} in the $3$-dimensional case.
 First, one follows the proof of item 6 of \cite[Proposition 7.1]{BBY} to obtain that the flow $\varphi^t$ is transitive on its invariant set $\Lambda=SM_k \smallsetminus \left(\cup_{t\ge 0} \varphi^t(\cV_1) \cup \cup_{t\ge 0} \varphi^{-t}(\cV_2)\right)$.
 Second, one needs to show that the local stable (resp. unstable) foliation of any point on the gluing hypersurface $\partial \cV_1 \simeq \partial \cV_2$ intersects the unstable (resp. stable) foliation of the invariant set $\Lambda$. One way to do that is to use once more the fact that we can take an arbitrarily large cover and use the hyperbolicity to argue that the intersections of the stable and unstable foliations of the invariant set $\Lambda$ with the gluing surfaces are $\eps$-dense. Thus we get non-empty intersections.
 This shows that there is only one chain recurrent class for the glued flow $\bar{\varphi}^t$, hence it is transitive.
\end{proof}

\begin{remark}
It would be quite interesting to pursue cut-and-glue constructions of (higher-dimensional) Anosov flows further. For instance, one might want to know whether passing to a finite cover in the above example is avoidable? Or whether the above example is, or can be made, volume preserving? Or whether one can glue multiple DA pieces together?
More generally, we would like to know if it is at all possible to make this type of construction starting from a different Anosov flow? In particular, deciding whether this construction can be somehow made to work starting from a suspension flow could help answer Question \ref{Q_generalized_Verjovsky}.
\end{remark}


\end{document}